\theoremstyle{plain}
 \newtheorem{theorem}{Theorem}[section]
 \newtheorem{proposition}[theorem]{Proposition}
 \newtheorem{lemma}[theorem]{Lemma}
 \newtheorem{corollary}[theorem]{Corollary}
\theoremstyle{definition}
 \newtheorem{definition}[theorem]{Definition}
\theoremstyle{remark}
\begin{document}
\title[Stable cohomology]{Stable cohomology of alternating groups}

\author[Bogomolov]{Fedor Bogomolov$^1$}
\address{F. Bogomolov, Courant Institute of Mathematical Sciences\\
251 Mercer St.\\
New York, NY 10012, U.S.A., \emph{and}}
\address{Laboratory of Algebraic Geometry, GU-HSE\\
7 Vavilova Str.\\
Moscow, Russia, 117312}
\email{bogomolo@courant.nyu.edu}
\author[B\"ohning]{Christian B\"ohning$^2$}
\address{Christian B\"ohning, Fachbereich Mathematik der Universit\"at Hamburg\\
Bundesstra\ss e 55\\
20146 Hamburg, Germany}
\email{christian.boehning@math.uni-hamburg.de}

\thanks{$^1$ Supported by NSF grant DMS-1001662 and by AG Laboratory GU- HSE grant RF government ag. 11 11.G34.31.0023}
\thanks{$^2$ Supported by the German Research Foundation (Deutsche Forschungsgemeinschaft) through Heisenberg-Stipendium BO 3699/1-1}

\newcommand{\PP}{\mathbb{P}} 
\newcommand{\QQ}{\mathbb{Q}} 
\newcommand{\ZZ}{\mathbb{Z}} 
\newcommand{\CC}{\mathbb{C}} 
\newcommand{\rmprec}{\wp}
\newcommand{\rmconst}{\mathrm{const}}
\newcommand{\xycenter}[1]{\begin{center}\mbox{\xymatrix{#1}}\end{center}} 
\newboolean{xlabels} 
\newcommand{\xlabel}[1]{ 
                        \label{#1} 
                        \ifthenelse{\boolean{xlabels}} 
                                   {\marginpar[\hfill{\tiny #1}]{{\tiny #1}}} 
                                   {} 
                       } 
\setboolean{xlabels}{false} 

\

\begin{abstract}
In this article we determine the stable cohomology groups $H^i_{\mathrm{s}} (\mathfrak{A}_n, \: \ZZ /p \ZZ )$ of the alternating groups $\mathfrak{A}_n$ for all integers $n$ and $i$, and all primes $p$. 
\end{abstract}

\maketitle

\section{Introduction and preliminaries}\xlabel{sIntroduction}
Let $G$ be a finite group, $V$ a finite-dimensional generically free complex representation of $G$, and let $V^L \subset V$ be the nonempty open subset of $V$ on which the $G$-action is free. There is then a natural homotopy class of maps from the classifying space $\mathrm{B} G$ to $V^L/G$ which, for each nonempty $G$-invariant Zariski open subset $U\subset V^L$ gives maps $H^i (G, \: \ZZ/p\ZZ ) \to H^i (U/G , \ZZ / p \ZZ )$. It turns out that the kernel $K_{G, \: V}$ of
\[
H^i (G, \: \ZZ /p\ZZ ) \to  \varinjlim_U H^i (U/G , \: \ZZ /p\ZZ )
\]
is independent of $V$, and the stable cohomology $H^i_{\mathrm{s}} (G, \: \ZZ/p\ZZ )$ is defined to be the quotient $H^i (G, \: \ZZ/p\ZZ )/K_{V, \: G}$. Let $K= \CC (V)^G$.  Algebraically, $H^i_{\mathrm{s}} (G, \: \ZZ/p\ZZ )$ can be identified with the image of $H^i (G, \: \ZZ / p\ZZ )$ in $H^i (\mathrm{Gal}(K) , \: \ZZ / p\ZZ )$.

\

More accessible computationally and stable birational invariants of the function field $K$ are the unramified cohomology groups $\mathrm{H}^i_{\mathrm{nr}} (G, \: \ZZ /p\ZZ )$ defined as follows: geometrically, the unramified cohomology classes $a$ inside $\mathrm{H}^i_{\mathrm{s}} (G, \: \ZZ /p \ZZ )$ are those for which, given any divisorial valuation $\nu_D$ of $K$, there exists a normal model $X=X_D$ of $K$ on which $\nu_D$ has a center, an isomorphism $i \, : \, U_X \to U_{V^L/G}$ between nonempty open subsets $U_X$ of $X$ and $U_{V^L/G}$ of $V^L/G$, and a representative $a'$ of $a$ in $H^i (U_{V^L/G} , \: \ZZ /p\ZZ )$ such that there is a class $b\in H^i (X , \: \ZZ /p\ZZ )$ whose image in $H^i (U_X, \: \ZZ /p\ZZ )$ coincides with $i^{\ast } (a')$. More algebraically, if $\mathcal{O}_{\nu }\subset K$ is the valuation ring of $\nu$, $\kappa_{\nu }=\mathcal{O}_{\nu }/\mathfrak{m}_{\nu }$ its residue field,   $S=\mathrm{Spec} (\mathcal{O}_{\nu })$ with open subset the generic point $U = \mathrm{Spec} (K) \subset S$ and complement the closed point $Z=\mathrm{Spec} (\kappa_{\nu } ) \subset S$, one can write down the long exact sequence of \'{e}tale cohomology with supports 
\[
\dots \to H^i (S, \: \ZZ/p ) \to  H^i (U, \: \ZZ/p ) \to H^{i+1}_Z (S, \: \ZZ/p ) \to H^{i+1} (S, \: \ZZ/p ) \to \dots 
\] 
where $H^i (U, \: \ZZ/p) \simeq H^i (\mathrm{Gal } (K), \: \ZZ/p)$ and there is the cohomological purity isomorphism
\[
H^j_Z (S, \: \ZZ/p ) \simeq H^{j-2} (Z, \: \ZZ/p )
\]
whence the preceding sequence becomes the \emph{Gysin sequence}
\begin{gather*}
\dots \to H^i_{\mathrm{\acute{e}t}} (S, \: \ZZ/p ) \stackrel{r_{\nu }}{\to }  H^i (\mathrm{Gal}(K), \: \ZZ/p ) \\ \stackrel{\partial_{\nu }}{\to } H^{i-1} (\mathrm{Gal}(\kappa_{\nu}), \: \ZZ/p ) \to H^{i+1}_{\mathrm{\acute{e}t}} (S, \: \ZZ/p ) \to \dots \, .
\end{gather*}
A class in $H^i (\mathrm{Gal}(K), \: \ZZ/p )$ is clearly unramified according to the geometric definition if and only if it is in the image of all maps $r_{\nu }$ for $\nu$ running over the divisorial valuations of $K$, i.e. equivalently if it is in the kernel of all maps $ \partial_{\nu }$, the \emph{residue maps}. The preceding sequence has as topological analogue the Borel-Moore long exact sequence. The residue map
\[
\partial_{\nu }\, :\, H^i (\mathrm{Gal}(K), \: \ZZ/p ) \to H^{i-1} (\mathrm{Gal}(\kappa_{\nu}), \: \ZZ/p )
\] 
agrees -up to a sign- with the following map defined entirely within the framework of Galois cohomology (see e.g. \cite{GMS}, Chap. II of Serre's part, \S 6 and \S 7): extend $\nu$ in some way to a valuation $\nu^{\ast}$ on $\bar{K}$ which is possible by Chevalley's theorem; all such extensions are conjugate under $\Gamma =\mathrm{Gal}(K )$, and $\nu^{\ast }$ defines subgroups $\Gamma_Z\subset \Gamma$ (the decomposition group, \emph{Zerlegungsgruppe}) and $\Gamma_T\subset \Gamma$ (the inertia subgroup, \emph{Tr\"agheitsgruppe}) by the conditions that $\sigma\in\Gamma$ is in $\Gamma_Z$ if $\sigma\cdot \nu^{\ast}$ and $\nu^ {\ast }$ are equivalent valuations, i.e. have the same valuation ring, and $\Gamma_T$ consists of those $\sigma$ such that $\sigma\cdot x -x \in \mathfrak{M}_{\nu^{\ast }}$ for all $x$ in the valuation ring of $\nu^ {\ast }$ whose maximal ideal we denoted by $\mathfrak{M}_{\nu^{\ast }}$. The decomposition group can be identified with the Galois group $\mathrm{Gal} (\bar{K}_{\nu} /K_{\nu })$ of the completion of $K$ with respect to $\nu$. The residue map $\partial_{\nu }$ then factors over the restriction to the decomposition group
\[
\partial_{\nu } \, : \, H^i (\mathrm{Gal}(K), \: \ZZ/p ) \to H^i (\mathrm{Gal}(K_{\nu }), \: \ZZ/p)\stackrel{r}{\to }  H^{i-1} (\mathrm{Gal}(\kappa_{\nu}) ,\: \ZZ/p )
\]
where the second arrow has the following description in the local situation: the Galois group $\Gamma_{K_\nu }= \mathrm{Gal}( K_{\nu })$ sits in the exact sequence
\[
1 \to I \to \Gamma_{K_{\nu }} \to \Gamma_{\kappa_{\nu }} = \mathrm{Gal} (\kappa_{\nu })\to 1
\]
where the surjection is given by the fact that $\nu$ extends uniquely to $\bar{K}_{\nu}$ and the residue field of the extension is an algebraic closure of $\kappa_{\nu }$. The kernel is the inertia subgroup which we denote by $I$ in this context, and it is topologically cyclic, $I \simeq \hat{\ZZ}$, corresponding to taking roots of the uniformizing parameter, and the preceding sequence splits, $\Gamma_{K_{\nu }}\simeq \hat{\ZZ}\oplus \mathrm{Gal} (\kappa_{\nu })$. As $\hat{\ZZ}$ has cohomological dimension $1$, one gets $H^i (\Gamma_{K_{\nu}} , \: \ZZ/p ) \simeq H^i (\Gamma_{\kappa_{\nu }}, \: \ZZ /p\ZZ )\oplus H^{i-1}  (\Gamma_{\kappa_{\nu }}, \: \ZZ /p\ZZ ) $ and a projection, which is independent of the splitting, $H^i (\mathrm{Gal}(K_{\nu }), \: \ZZ/p)\stackrel{r}{\to }  H^{i-1} (\mathrm{Gal}(\kappa_{\nu}) ,\: \ZZ/p )$, defining the second arrow in the sequence of maps yielding $\partial_{\nu }$. More precisely, the Hochschild-Serre spectral sequence of the group extension of $\Gamma_{\kappa_{\nu}}$ by $I$
\[
H^p (\Gamma_{\kappa_{\nu }}, \: H^q (I, \: \ZZ/p) ) \implies H (\Gamma_{K_{\nu }}, \: \ZZ/p )
\]
reduces to a long exact sequence as $H^i (I, \: \ZZ/p ) = 0$ for $i \ge 2$, $H^0 (I, \: \ZZ/p ) = \ZZ/p$, $H^1 (I, \: \ZZ/p) = \mathrm{Hom} (I, \: \ZZ/p ) = \ZZ/p$, which reads
\begin{gather*}
\dots \to H^i (\Gamma_{\kappa_{\nu }}, \: \ZZ/p ) \to H^i (\Gamma_{K_{\nu }}, \: \ZZ/p ) \to H^{i-1} (\Gamma_{\kappa_{\nu }} , \: \mathrm{Hom} (\hat{\ZZÊ}, \: \ZZ/p )) \to \\
\to H^{i+1} (\Gamma_{\kappa_{\nu }}, \: \ZZ/p ) \to H^{i+1} (\Gamma_{K_{\nu }}, \: \ZZ/p ) \to \dots
\end{gather*}
and the fact that the extension splits implies that this long exact sequence breaks into short exact sequences
\[
0 \to H^i (\Gamma_{\kappa_{\nu }}, \: \ZZ/p ) \to H^i (\Gamma_{K_{\nu }}, \: \ZZ/p ) \stackrel{r}{\to } H^{i-1} (\Gamma_{\kappa_{\nu }} , \: \ZZ/p )
\]
where $r$ can also be explicitly described in terms of cocycles, see\cite{GMS}, p.16.

\

In \cite{B-P} the following theorem was proven (loc. cit, Theorem 5.1):
\begin{theorem}\xlabel{t2Coefficients}
There is a natural isomorphism $H^{\ast }_{\mathrm{s}} (\mathfrak{A}_{2n+1}, \: \ZZ/2\ZZ ) \simeq H^{\ast }_{\mathrm{s}} (\mathfrak{A}_{2n}, \: \ZZ/2\ZZ )$, and as a $\ZZ/2\ZZ$-vector space
\[
H^{\ast}_{\mathrm{s}} (\mathfrak{A}_{2n}, \: \ZZ/2\ZZ ) = \bigoplus_{0\le i \le n } \ZZ/2\ZZ \cdot w_{2i} \oplus \bigoplus_{0< i \le n}\ZZ/2\ZZ\cdot u_1 \wedge w_{2i}
\]
where $w_j$ are the (images in stable cohomology of) the Stiefel-Whitney classes in $H^j (\mathfrak{A}_{2n}, \: \ZZ/2\ZZ )$ obtained from the cohomology ring of the real orthogonal group $\mathrm{O}(2n)$ via the inclusions $\mathfrak{A}_{2n}\subset \mathfrak{S}_{2n}\subset \mathrm{O}(2n)$. The class $u_1$ is a one-dimensional cohomology class which can be described as follows:\\
Putting $N=2n$, the group $\mathfrak{S}_{N}$ acts generically freely on the complement $\CC^{N-1}-H$ of the braid hyperplane arrangement $H$ in the standard permutation representation $\CC^{N-1}$, and $(\CC^{N-1}-H)/\mathfrak{S}_N \simeq \CC^{N-1} -\Delta$, the complement of the discriminant. Taking a nonramified double covering $\widetilde{\CC^{N-1}-\Delta}$ of $\CC^{N-1} -\Delta$ corresponding to the inclusion $\mathfrak{A}_N \subset \mathfrak{S}_N$, one gets a description of $u_1$ as the generator of $H^1 (\widetilde{\CC^{N-1}-\Delta})$ given by the root of the discriminant.

\end{theorem}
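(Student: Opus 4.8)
The plan is to reduce the computation to the quadratic extension of function fields $K_{\mathfrak{A}} = K_{\mathfrak{S}}(\sqrt{\Delta})$, where $K_{\mathfrak{S}}$ is the field of symmetric functions and $\Delta$ the discriminant, and to play it off against the (known) description of $H^{\ast}_{\mathrm{s}}(\mathfrak{S}_N, \ZZ/2\ZZ)$. By the algebraic description of stable cohomology recalled above, $H^i_{\mathrm{s}}(\mathfrak{A}_N, \ZZ/2\ZZ)$ is the image of $H^i(\mathfrak{A}_N, \ZZ/2\ZZ)$ in $H^i(\mathrm{Gal}(K_{\mathfrak{A}}), \ZZ/2\ZZ)$, and the Kummer class of $K_{\mathfrak{A}}/K_{\mathfrak{S}}$ in $H^1(\mathrm{Gal}(K_{\mathfrak{S}}), \ZZ/2\ZZ)$ is exactly the image of the sign class $w_1 \in H^1(\mathfrak{S}_N, \ZZ/2\ZZ)$. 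Hence the main tool is the transfer (Gysin) sequence of the double cover $\mathrm{Spec}\, K_{\mathfrak{A}} \to \mathrm{Spec}\, K_{\mathfrak{S}}$,
\begin{gather*}
\cdots \to H^{i-1}(\mathrm{Gal}\, K_{\mathfrak{S}}) \xrightarrow{\cup w_1} H^i(\mathrm{Gal}\, K_{\mathfrak{S}}) \xrightarrow{\pi^{\ast}} H^i(\mathrm{Gal}\, K_{\mathfrak{A}}) \\ \xrightarrow{\mathrm{tr}} H^i(\mathrm{Gal}\, K_{\mathfrak{S}}) \xrightarrow{\cup w_1} \cdots,
\end{gather*}
which is compatible with the analogous sequence for $\mathrm{B}\mathfrak{A}_N \to \mathrm{B}\mathfrak{S}_N$ on group cohomology. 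The input from the symmetric group I would use is that $w_1$ and the even Stiefel--Whitney classes $w_{2i}$ of the permutation representation are stably nonzero, that $w_1 w_{2i}$ is stably nonzero while $w_1^2 = 0$ stably (the latter because $w_1$ is the Kummer class of a quadratic extension and $-1$ is a square), together with the multiplicative structure; these come from restriction to the elementary abelian subgroup of disjoint transpositions, over which stable cohomology is the exterior algebra on the degree one classes.

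For the lower bound I would exhibit the listed classes. The $w_{2i}$ come by restriction from $H^{\ast}_{\mathrm{s}}(\mathfrak{S}_N)$, and one checks they stay nonzero and linearly independent in $H^{\ast}(\mathrm{Gal}\, K_{\mathfrak{A}})$ by verifying, on a suitable family of abelian subgroups, that no nontrivial combination lies in $\ker \pi^{\ast} = w_1 \cup H^{\ast - 1}(\mathrm{Gal}\, K_{\mathfrak{S}})$. The classes $u_1 \wedge w_{2i}$ with $i > 0$ are the delicate point, because $u_1 = (\sqrt{\Delta})$ itself is not in $H^{\ast}_{\mathrm{s}}(\mathfrak{A}_N)$ — the abelianization of $\mathfrak{A}_N$ has no $2$-torsion, so $H^1_{\mathrm{s}}(\mathfrak{A}_N, \ZZ/2\ZZ) = 0$ — and the products therefore cannot be obtained by multiplying stable classes. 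Instead I would produce them via the transfer: the corestriction $L^{\ast}/(L^{\ast})^2 \to K^{\ast}/(K^{\ast})^2$ is the norm and $N(\sqrt{\Delta}) = -\Delta \equiv \Delta$, so $\mathrm{tr}(u_1) = w_1$, whence by the projection formula $\mathrm{tr}(u_1 \cup \pi^{\ast} w_{2i}) = w_1 w_{2i}$; since $w_1 w_{2i} \ne 0$ while $w_1\cdot(w_1 w_{2i}) = 0$ in $H^{\ast}_{\mathrm{s}}(\mathfrak{S}_N)$, exactness of the Gysin sequence places $w_1 w_{2i}$ in the image of $\mathrm{tr}$, and chasing the compatible group-cohomology sequence produces a class in $H^{2i+1}(\mathfrak{A}_N, \ZZ/2\ZZ)$ whose image in $H^{\ast}(\mathrm{Gal}\, K_{\mathfrak{A}})$ is $u_1 \wedge w_{2i}$ up to a term pulled back along $\pi^{\ast}$ which the relations of $H^{\ast}_{\mathrm{s}}(\mathfrak{S}_N)$ allow one to absorb.

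The upper bound is then a diagram chase in the same sequence: any stable class on $\mathfrak{A}_N$ has transfer lying in $\ker(\cup w_1) \subseteq H^{\ast}_{\mathrm{s}}(\mathfrak{S}_N)$ and, modulo $\pi^{\ast}$, is determined by it, so the known shape of $H^{\ast}_{\mathrm{s}}(\mathfrak{S}_N)$ forces the answer — in particular the odd Stiefel--Whitney classes and $w_1$ itself restrict to $0$ on $\mathfrak{A}_N$, the image of $\pi^{\ast}$ accounts exactly for the $w_{2i}$ (with their pairwise products killed), and the $\mathrm{tr}$-part accounts exactly for the $u_1 \wedge w_{2i}$ with $i > 0$, while $u_1^2$ gets rewritten in terms of the $w_{2i}$.

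Finally, for $H^{\ast}_{\mathrm{s}}(\mathfrak{A}_{2n+1}, \ZZ/2\ZZ) \simeq H^{\ast}_{\mathrm{s}}(\mathfrak{A}_{2n}, \ZZ/2\ZZ)$ I would exploit the inclusion $\mathfrak{A}_{2n} \subset \mathfrak{A}_{2n+1}$ of odd index $2n+1$: with $\ZZ/2\ZZ$ coefficients the transfer argument makes restriction injective on group cohomology, and since $\CC(x_1, \dots, x_{2n+1})^{\mathfrak{A}_{2n}} = K_{\mathfrak{A}_{2n}}(x_{2n+1})$ is purely transcendental over $K_{\mathfrak{A}_{2n}}$, it is injective on stable cohomology as well, giving an embedding $H^{\ast}_{\mathrm{s}}(\mathfrak{A}_{2n+1}) \hookrightarrow H^{\ast}_{\mathrm{s}}(\mathfrak{A}_{2n})$; surjectivity then reduces to checking that each generator extends to $\mathfrak{A}_{2n+1}$, which is immediate for the $w_{2i}$ from the compatible permutation representations and for $u_1 \wedge w_{2i}$ follows by tracking $\sqrt{\Delta_{2n+1}} = \sqrt{\Delta_{2n}} \cdot \prod_{j \le 2n}(x_j - x_{2n+1})$ under the inclusion and again absorbing the extra factor with the relations already established. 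The step I expect to be the main obstacle is the construction and identification of the classes $u_1 \wedge w_{2i}$: showing that these products, which a priori live only in the cohomology of the function field of the double cover, actually lie in the image of $H^{\ast}(\mathfrak{A}_N, \ZZ/2\ZZ)$ even though the factor $u_1$ does not, is where the transfer, the ring structure, and the precise shape of $H^{\ast}_{\mathrm{s}}(\mathfrak{S}_N)$ must all be used together, and it is also where the relations of the target (the vanishing of the $w_{2i}w_{2j}$ and the value of $u_1^2$) have to be pinned down.
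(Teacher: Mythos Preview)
The paper does not contain a proof of this theorem. It is quoted verbatim from \cite{B-P} (``loc.\ cit., Theorem 5.1'') as background for the $p=2$ case, and the present paper's contribution is the odd-prime computation (Theorem~\ref{tStableCohomologyAlternating}). So there is no ``paper's own proof'' to compare your proposal against.

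That said, a brief remark on your outline. The strategy of exploiting the quadratic extension $K_{\mathfrak{A}}/K_{\mathfrak{S}}$ and the associated transfer/Gysin sequence, together with the known $H^{\ast}_{\mathrm{s}}(\mathfrak{S}_N,\ZZ/2)$, is natural and is in the spirit of how such results are typically approached. Your identification of the main difficulty is on target: the class $u_1$ does \emph{not} lie in $H^{\ast}_{\mathrm{s}}(\mathfrak{A}_N,\ZZ/2)$ (indeed $H^1(\mathfrak{A}_N,\ZZ/2)=0$ for $N\ge 5$), so producing the odd-degree classes $u_1\wedge w_{2i}$ inside the \emph{image of group cohomology} is the crux, and the transfer/projection-formula argument you sketch is the right mechanism. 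The step that is genuinely delicate, and which your sketch leaves somewhat loose, is the passage from ``$w_1 w_{2i}$ lies in the image of the Galois-cohomology transfer'' to ``there exists a class in $H^{2i+1}(\mathfrak{A}_N,\ZZ/2)$ mapping to $u_1\wedge w_{2i}$'': compatibility of the two long exact sequences is not by itself enough, because the group-cohomology sequence is not in general exact at that spot in the way the Galois one is, so one has to either exhibit the lift explicitly (e.g.\ via the Evens norm / Steenrod power construction, or via an explicit cocycle) or invoke detection by a family of subgroups on which the lift is visible. If you want to carry this out, that is the place to put the work; the odd-index argument for $\mathfrak{A}_{2n}\subset\mathfrak{A}_{2n+1}$ and the upper bound via the shape of $H^{\ast}_{\mathrm{s}}(\mathfrak{S}_N)$ are comparatively routine once the generators are in hand.
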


In our Theorem \ref{tStableCohomologyAlternating} we determine $H^{\ast}_{\mathrm{s}} (\mathfrak{A}_{N}, \: \ZZ/p\ZZ )$ completely for odd primes $p$.

\

We base our approach to the computation of the stable cohomology of alternating groups on the following lemmas.

\begin{lemma}\xlabel{lKuennethFactorAbelian}
Suppose a group is a product $G\times A$ of finite groups $G$ and $A$ with $A$ abelian. Then there is the K\"unneth decomposition
\begin{gather*}
H^{n}_{\mathrm{s}} (G\times A, \: \ZZ/p ) \simeq \bigoplus_{i+j= n} H^i_{\mathrm{s}} (G, \: \ZZ/p ) \otimes H^j_{\mathrm{s}} (A, \: \ZZ/p ) \, .
\end{gather*}
\end{lemma}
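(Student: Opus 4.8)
The plan is to combine the Künneth theorem for ordinary group cohomology with a description of stable cohomology as the image of $H^\ast(-,\ZZ/p)$ in the cohomology of the Galois group of an invariant function field, using the fact that for a \emph{finite abelian} group $A$ the stable cohomology coincides with the \emph{unramified} cohomology and is already realized on the level of a very explicit field. First I would fix generically free representations $V$ of $G$ and $W$ of $A$; then $V\oplus W$ is a generically free representation of $G\times A$, and $\CC(V\oplus W)^{G\times A}=\CC(V)^G\otimes_{\CC}\CC(W)^A$ is the free compositum (inside a common overfield) of the two invariant fields, which are algebraically disjoint over $\CC$. The natural map $\mathrm{B}(G\times A)\to (V\oplus W)^L/(G\times A)$ is compatible with the projections to the two factors, so the Künneth isomorphism $H^n(G\times A,\ZZ/p)\cong\bigoplus_{i+j=n}H^i(G,\ZZ/p)\otimes H^j(A,\ZZ/p)$ is compatible with the maps into $\varinjlim_U H^\ast(U/(G\times A),\ZZ/p)$. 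Thus the stable cohomology $H^n_{\mathrm{s}}(G\times A,\ZZ/p)$ is the image of the full tensor product, and what has to be shown is that a class $\sum_k a_k\otimes b_k$ dies in stable cohomology if and only if, after a change of basis, each surviving summand pairs a nonzero stable class of $G$ with a nonzero stable class of $A$ — equivalently, that the subspace $K_{G\times A}$ of classes vanishing stably equals $K_G\otimes H^\ast(A)\ +\ H^\ast(G)\otimes K_A$.

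The key step, and the one I expect to be the main obstacle, is the surjectivity/injectivity bookkeeping: showing that no ``cross terms'' are killed, i.e. that if $a\in H^i(G,\ZZ/p)$ has nonzero image $\bar a$ in $H^i_{\mathrm{s}}(G,\ZZ/p)$ and $b\in H^j(A,\ZZ/p)$ has nonzero image $\bar b$, then $\overline{a\otimes b}\neq 0$ in $H^{i+j}_{\mathrm{s}}(G\times A,\ZZ/p)$. For this I would pass to the Galois side: $H^i_{\mathrm{s}}(G,\ZZ/p)$ is the image of $H^i(G,\ZZ/p)$ in $H^i(\mathrm{Gal}(\CC(V)^G),\ZZ/p)$, and similarly for $A$; and since $\CC(V)^G$ and $\CC(W)^A$ are algebraically disjoint over the algebraically closed field $\CC$ with $\CC(V\oplus W)^{G\times A}$ their compositum, there is a natural surjection $\mathrm{Gal}(\CC(V\oplus W)^{G\times A})\twoheadrightarrow \mathrm{Gal}(\CC(V)^G)\times\mathrm{Gal}(\CC(W)^A)$, under which the cup product $\bar a\otimes\bar b$ in the cohomology of the right-hand product pulls back to the image of $a\otimes b$. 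So it suffices to know that $\bar a\otimes\bar b\neq 0$ in $H^{i+j}$ of the product of the two Galois groups; here one uses that $A$ is abelian, so $\CC(W)^A$ can be taken to be purely transcendental times the field of a torus-quotient with $\mathrm{Gal}$ having the structure of $\widehat{\ZZ_p}$-type factors on which cohomology is a free exterior/polynomial algebra — concretely, $H^\ast_{\mathrm{s}}(A,\ZZ/p)$ is an honest subalgebra of $H^\ast(A,\ZZ/p)$ that is polynomial-free enough that multiplication by a nonzero $\bar b$ is injective on $H^\ast(\mathrm{Gal}(\CC(V)^G),\ZZ/p)$ via the external product. The Künneth theorem for the cohomology of a product of profinite groups (valid with field coefficients once one factor has cohomology of finite type in each degree, which holds here) then gives the nonvanishing.

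Assembling: the Künneth map is an isomorphism on ordinary cohomology; it carries $K_G\otimes H^\ast(A)+H^\ast(G)\otimes K_A$ into $K_{G\times A}$ because each factor already dies after restriction to a suitable open set of the product quotient (pull back the shrinking open sets from the factors); and it carries no further class into $K_{G\times A}$ by the Galois-side injectivity argument of the previous paragraph. Hence $H^n_{\mathrm{s}}(G\times A,\ZZ/p)=\bigl(\bigoplus_{i+j=n}H^i(G)\otimes H^j(A)\bigr)\big/\bigl(K_G\otimes H^\ast(A)+H^\ast(G)\otimes K_A\bigr)_n\cong\bigoplus_{i+j=n}H^i_{\mathrm{s}}(G,\ZZ/p)\otimes H^j_{\mathrm{s}}(A,\ZZ/p)$, which is the asserted decomposition. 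The one technical point worth spelling out carefully in the final write-up is the compatibility of the two projections $V\oplus W\to V$, $V\oplus W\to W$ with the open sets $U$ over which the limit is taken — one should note that cofinally one may take $U$ of product form $U_V\times U_W$ — and the standard fact that algebraically disjoint extensions of $\CC$ have a compositum whose absolute Galois group surjects onto the product of the two.
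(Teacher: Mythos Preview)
Your approach is genuinely different from the paper's, and it has a real gap at the injectivity step.

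The paper argues geometrically. Realizing $A$ via a torus $T\simeq (\CC^*)^m$ (so that $H^*_{\mathrm{s}}(A,\ZZ/p)$ is already the image in $H^*(T,\ZZ/p)$), it shows that for \emph{any} divisor $D\subset (V^L/G)\times T$ the complement is homotopy equivalent to a product $(U^L/G)\times (S^1)^m$ for a suitable Zariski open $U^L/G\subset V^L/G$. This is done by an inductive circle-fibration construction: one pushes $D$ to a toric boundary stratum and takes small circles in the normal $\CC$-direction. The upshot is that the colimit over all Zariski opens in the product is computed by product-type objects, which gives the K\"unneth isomorphism immediately. No Galois-group gymnastics are needed.

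Your route through absolute Galois groups founders on two points. First, you assert a surjection $\mathrm{Gal}(L)\twoheadrightarrow \mathrm{Gal}(K_1)\times\mathrm{Gal}(K_2)$ (with $L=\CC(V\oplus W)^{G\times A}$, $K_i$ the two invariant fields) and then say ``it suffices to know that $\bar a\otimes\bar b\neq 0$ in $H^{i+j}$ of the product''. But inflation along a surjection of profinite groups is \emph{not} injective on cohomology, so nonvanishing in $H^*(\mathrm{Gal}(K_1)\times\mathrm{Gal}(K_2))$ does not by itself give nonvanishing in $H^*(\mathrm{Gal}(L))$, which is where the stable class lives. Second, the K\"unneth theorem you invoke for the product of the two absolute Galois groups needs one factor to have finite-type cohomology in each degree; neither does (already $H^1(\mathrm{Gal}(K_2),\ZZ/p)\cong K_2^*/(K_2^*)^p$ is infinite-dimensional). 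Your closing ``technical point'' that Zariski opens in a product are cofinally of product form is also false --- the diagonal in $\mathbb{A}^1\times\mathbb{A}^1$ already shows this --- though this does not affect the easy surjectivity direction.

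A Galois-side argument \emph{can} be made to work, but the mechanism is different and uses $A$ abelian more concretely than you indicate. Since $A$ is abelian one may take $L=K_1(t_1,\dots,t_m)$ purely transcendental over $K_1$, with a basis of $H^*_{\mathrm{s}}(A,\ZZ/p)$ given by cup products of the Kummer classes $(t_i)\in H^1(\mathrm{Gal}(L),\ZZ/p)$. Then $\bar a\cup (t_{i_1})\cup\cdots\cup(t_{i_k})\neq 0$ in $H^*(\mathrm{Gal}(L))$ follows by taking iterated residues $\partial_{t_{i_j}}$, which strip off the $(t_{i_j})$'s one at a time and land on $\bar a\neq 0$ in $H^*(\mathrm{Gal}(K_1))$. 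That residue computation (equivalently, the Milnor-type splitting of the Galois cohomology of $K(t)$) is the substantive content you are missing; the product-of-Galois-groups maneuver does not supply it.
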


\begin{proof}
It is known ( \cite{Bogo93}, Lemma 7.1) that if we choose a free presentation $\pi \, :\, \ZZ^n \twoheadrightarrow A$ of $A$, then the kernel of the stabilization map coincides with the kernel of $\pi^{\ast }$. In other words, if one realizes $\ZZ^n$ as the fundamental group of some algebraic torus $T$, with cover $T' \to T\simeq T' /A$ corresponding to $A$, and realizes $T'$ as a maximal torus in some $\mathrm{GL} (W)$, then stabilization is achieved by considering the image of the cohomology of $A$ in the cohomology of $T\simeq T'/A \subset W^L/A$. This is so because one can find a product of circles $(S^1)^m$ in the complement of any divisor $D$ in $T \simeq (\CC^{\ast })^m$, the inclusion being a homotopy equivalence, so the cohomology of $T$ is already stable. The product of circles can be found by induction on the dimension $m$ of $T$; if $m=1$, one chooses a circle in the complex plane $\CC$ not passing through the finite number of points which $D$ consists of. If $m>1$, one views $T$ as a subset of $W$, which is stratified into torus orbits of lower dimension. Each of these is isomorphic to an algebraic tori. Choose a codimension $1$ torus orbit $T_1$ adjacent to $T$. The closure $\bar{D}$ inside $W$ of a divisor $D\subset T$ meets $T_1$ in a proper algebraic subset, and by the induction hypothesis there is a real submanifold $M\simeq (S^1)^{m-1}$ in the complement of $\bar{D} \cap T_1$. If $x_1, \dots , \: x_m$ are coordinates in $W$ such that $T = \{ x_i \neq 0 \; \forall \, i \}$, $T_1 = \{ x_1 = 0 \wedge x_j \neq 0 \; \forall\, j \neq 1 \}$, then $W = W' \oplus \CC$ where $W' = \{ x_1 = 0 \}$. If we choose a small circle $S_{\epsilon }\subset \CC$, then $M \times S_{\epsilon } \subset W$ will be in a small neighbourhood of $M$ hence will not intersect $D$.

 \
 
This argument can be made relative: note first that there is always a natural surjection $H^{\ast }_{\mathrm{s}} (G, \: \ZZ/p ) \otimes H^{\ast }_{\mathrm{s}} (A, \: \ZZ/p ) \to H^{\ast }_{\mathrm{s}} (G \times A, \: \ZZ/p )$ as the Zariski topology on a product is finer than the product topology, and to show it is an isomorphism, it suffices to note the following: suppose $T\simeq (\CC^{\ast })^m \simeq T'/A$ is as before, and $V$ is a generically free $G$-representation, $V^L$ the open part where the action is free. Then if $D\subset (V^L/G) \times T$ is any divisor, there is always a divisor $D' \subset V^L/G$ and a relatively compact subset $U^L/G \subset V^L/G-D'$ with a (trivial) iterated circle fibration $U^L/G \times (S^1)^m \subset ((V^L/G) \times T)-D$ such that $U^L/G \times (S^1)^m$ and $((V^L/G) \times T)-D$ are homotopy equivalent.\\
Indeed, viewing $V^L/G \times T \subset V^L/G\times W$, the latter being a vector bundle, we have a zero section $V^L/G \subset V^L/G \times W$. Moreover, $\bar{D} \cap V^L/G$, where $\bar{D}$ is the closure of $D$ in $V^L/G \times W$, will be contained in some divisor $D'$. Shrinking $V^L/G-D'$ slightly, we can find a relatively compact open subset $U^L/G \subset V^L/G -D'$ homotopy equivalent to $V^L/G - D'$ and with the claimed circle fibration. 
\end{proof}

We say that the stable cohomology $H^{\ast}_{\mathrm{s}} (G, \ZZ/p )$ is detected by abelian subgroups if the map induced by the restriction to abelian subgroups
\[
H^*_{\mathrm{s}} (G, \: \ZZ /p\ZZ )  \longrightarrow \prod_{A} H^*_{\mathrm{s}} (A, \: \ZZ /p\ZZ )
\]
is injective (where $A$ ranges over all abelian subgroups of $G$). We will then also use the following principle which follows from Lemma \ref{lKuennethFactorAbelian}.

\begin{lemma}\xlabel{lKuennethstable}
Suppose $G_1$ and $G_2$ are finite groups such that at least one of $H^*_{\mathrm{s}} (G_1, \: \ZZ/p )$ or $H^*_{\mathrm{s}} (G_2, \: \ZZ/p )$ is detected by abelian subgroups. Then one has a K\"unneth formula in stable cohomology
\[
H^{\ast }_{\mathrm{s}} (G_1 \times G_2, \: \ZZ/p ) \simeq H^{\ast}_{\mathrm{s}} (G_1, \: \ZZ/p) \otimes H^{\ast}_{\mathrm{s}} (G_2, \: \ZZ/p)\, . 
\]
\end{lemma}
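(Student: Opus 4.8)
The plan is to prove the stronger statement that the natural surjection
\[
\mu\colon H^{\ast}_{\mathrm{s}}(G_1,\: \ZZ/p)\otimes H^{\ast}_{\mathrm{s}}(G_2,\: \ZZ/p)\longrightarrow H^{\ast}_{\mathrm{s}}(G_1\times G_2,\: \ZZ/p),\qquad x\otimes y\mapsto \mathrm{pr}_1^{\ast}x\cup\mathrm{pr}_2^{\ast}y ,
\]
is in fact an isomorphism. That this map exists and is surjective is exactly the remark made in the proof of Lemma \ref{lKuennethFactorAbelian} (the Zariski topology on a product being finer than the product topology), applied with the second factor an arbitrary finite group rather than an abelian one: the Künneth decomposition $H^{\ast}(G_1\times G_2,\: \ZZ/p)=H^{\ast}(G_1,\: \ZZ/p)\otimes H^{\ast}(G_2,\: \ZZ/p)$ together with the opens $U_1\times V_2^L$ shows that $K_{G_1,V_1}\otimes H^{\ast}(G_2)+H^{\ast}(G_1)\otimes K_{G_2,V_2}$ dies in $H^{\ast}_{\mathrm{s}}(G_1\times G_2)$. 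So only injectivity of $\mu$ is at issue. Assume without loss of generality that $H^{\ast}_{\mathrm{s}}(G_2,\: \ZZ/p)$ is detected by abelian subgroups. The first step I would carry out is to record the following compatibility: for each abelian subgroup $A\le G_2$, under the Künneth isomorphism $H^{\ast}_{\mathrm{s}}(G_1\times A,\: \ZZ/p)\simeq H^{\ast}_{\mathrm{s}}(G_1,\: \ZZ/p)\otimes H^{\ast}_{\mathrm{s}}(A,\: \ZZ/p)$ of Lemma \ref{lKuennethFactorAbelian}, the restriction map along $G_1\times A\hookrightarrow G_1\times G_2$ coincides with $\mathrm{id}_{H^{\ast}_{\mathrm{s}}(G_1)}\otimes\mathrm{res}^{G_2}_{A}$. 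This is formal: restriction is the identity on classes pulled back from the first factor and equals $\mathrm{res}^{G_2}_A$ on classes pulled back from the second, and both decompositions in sight are the external product $\mathrm{pr}_1^{\ast}(-)\cup\mathrm{pr}_2^{\ast}(-)$.

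Granting this, composing $\mu$ with the product over all abelian $A\le G_2$ of the restriction maps $H^{\ast}_{\mathrm{s}}(G_1\times G_2)\to H^{\ast}_{\mathrm{s}}(G_1\times A)$ yields, after the identifications above, the map
\[
H^{\ast}_{\mathrm{s}}(G_1)\otimes H^{\ast}_{\mathrm{s}}(G_2)\;\longrightarrow\;\prod_{A}\bigl(H^{\ast}_{\mathrm{s}}(G_1)\otimes H^{\ast}_{\mathrm{s}}(A)\bigr)
\]
induced by $\mathrm{id}\otimes\mathrm{res}$, where $\mathrm{res}\colon H^{\ast}_{\mathrm{s}}(G_2)\hookrightarrow\prod_A H^{\ast}_{\mathrm{s}}(A)$ is injective by hypothesis. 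I would then verify injectivity of this composite one total degree $n$ at a time, where it is a finite direct sum over $i+j=n$ of the maps $H^{i}_{\mathrm{s}}(G_1)\otimes H^{j}_{\mathrm{s}}(G_2)\to\prod_A\bigl(H^{i}_{\mathrm{s}}(G_1)\otimes H^{j}_{\mathrm{s}}(A)\bigr)$. Each of these factors as $H^{i}_{\mathrm{s}}(G_1)\otimes H^{j}_{\mathrm{s}}(G_2)\hookrightarrow H^{i}_{\mathrm{s}}(G_1)\otimes\prod_A H^{j}_{\mathrm{s}}(A)\hookrightarrow\prod_A\bigl(H^{i}_{\mathrm{s}}(G_1)\otimes H^{j}_{\mathrm{s}}(A)\bigr)$: the first arrow is injective because $\ZZ/p$-vector spaces are flat, and the second because the canonical map $W\otimes\prod_A M_A\to\prod_A(W\otimes M_A)$ is injective for any vector space $W$. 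Hence $\mu$ is injective in each degree, and being also surjective it is an isomorphism, which is the assertion of Lemma \ref{lKuennethstable}.

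The one step requiring genuine care — everything else being formal homological algebra — is the compatibility invoked at the end of the first paragraph, namely that the decomposition of $H^{\ast}_{\mathrm{s}}(G_1\times A)$ produced by Lemma \ref{lKuennethFactorAbelian} is the external product one, so that restriction to the subgroups $G_1\times A$ genuinely splits as $\mathrm{id}\otimes\mathrm{res}$. I expect this to be the main (modest) obstacle: it amounts to checking that the model-dependent construction of stable cohomology, and in particular the restriction maps $H^{\ast}_{\mathrm{s}}(G)\to H^{\ast}_{\mathrm{s}}(A)$ used in the notion of detection, are set up so as to be functorial for inclusions of products of subgroups and compatible with external products exactly as in ordinary group cohomology; this is implicit in \cite{Bogo93} and in the discussion of residue maps preceding the lemma, and one should simply make the verification explicit.
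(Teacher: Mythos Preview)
Your argument is correct and is essentially the same as the paper's: both show the natural surjection is injective by composing with the product of restrictions to $G_1\times A$ (respectively $A_i\times G_2$) and invoking Lemma~\ref{lKuennethFactorAbelian} together with the detection hypothesis, the only difference being that you spell out the flatness and tensor--product-of-products steps and the compatibility of the K\"unneth decomposition with restriction, all of which the paper leaves implicit.
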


\begin{proof}
There is always the natural surjection
\[
H^{\ast}_{\mathrm{s}} (G_1, \: \ZZ/p) \otimes H^{\ast}_{\mathrm{s}} (G_2, \: \ZZ/p) \stackrel{p}{ \twoheadrightarrow } H^{\ast }_{\mathrm{s}} (G_1 \times G_2, \: \ZZ/p )\, .
\]
Without loss of generality, we can assume that abelian subgroups $A_i$ are a detecting family for the stable cohomology of $G_1$:
\[
H^*_{\mathrm{s}} (G_1, \: \ZZ /p ) \hookrightarrow  \prod_{i\in I} H^*_{\mathrm{s}} (A_i, \: \ZZ /p )\, .
\]
Now by Lemma \ref{lKuennethFactorAbelian}, there is an injection 
\[
H^{\ast}_{\mathrm{s}} (G_1, \: \ZZ/p) \otimes H^{\ast}_{\mathrm{s}} (G_2, \: \ZZ/p) \stackrel{i }{\hookrightarrow } \prod_{i } H^{\ast}_{\mathrm{s}} (A_i\times G_2, \: \ZZ/p )\, .
\]
But $i = (\prod \mathrm{res}_{A_i\times G_2}) \circ p$ where 
\[
\prod \mathrm{res}_{A_i\times G_2}\, :\, H^{\ast }_{\mathrm{s}} (G_1 \times G_2, \: \ZZ/p ) \to  \prod_{i } H^{\ast}_{\mathrm{s}} (A_i\times G_2, \: \ZZ/p )
\]
is the product of restriction maps. Hence $p$ is also injective.
\end{proof}

\begin{lemma}\xlabel{lNoUnramified}
Let $G$ be a finite group such that $H^i_{\mathrm{nr}} (G, \: \ZZ /p\ZZ ) = 0$ for all $i > 0$. Then every stable class $a \in H^*_{\mathrm{s}} (G, \: \ZZ/p\ZZ )$ is nontrivial on the centralizer $C(g)$ of some element $g\in G$, i.e. the restriction $\mathrm{res}\, :\, H^*_{\mathrm{s}} (G, \: \ZZ/p\ZZ ) \to H^*_{\mathrm{s}} (C(g) , \: \ZZ/p\ZZ )$ is nonzero. 
\end{lemma}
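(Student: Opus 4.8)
The plan is to exploit the hypothesis $H^{>0}_{\mathrm{nr}}(G,\ZZ/p)=0$ to produce, for any nonzero stable class, a divisorial valuation at which its residue is nonzero, and then to move that nonzero residue down a restriction map to the \emph{decomposition group} of the valuation, which will automatically sit inside a centralizer $C(g)$.

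I would fix a generically free representation $V$ of $G$, write $L=\CC(V)$ and $K=L^G$, and identify $H^i_{\mathrm{s}}(G,\ZZ/p)$ with the image of $H^i(G,\ZZ/p)$ in $H^i(\mathrm{Gal}(K),\ZZ/p)$; one may assume $a$ homogeneous of degree $i\ge 1$, the case $i=0$ being trivial with $g=e$. Since $a\ne 0$ and, by hypothesis, $a$ cannot be unramified, the definition recalled above yields a divisorial valuation $\nu$ of $K$ with $\partial_\nu(a)\ne 0$ in $H^{i-1}(\mathrm{Gal}(\kappa_\nu),\ZZ/p)$. I then extend $\nu$ to a valuation $\tilde\nu$ of $L$ and let $G_T\subseteq G_Z\subseteq G=\mathrm{Gal}(L/K)$ be its inertia and decomposition subgroups, with residue field $\ell$ of $\tilde\nu$; since the residue characteristic is $0$, $G_T$ is cyclic, of order the ramification index $e(\tilde\nu/\nu)$, and $G_Z/G_T=\mathrm{Gal}(\ell/\kappa_\nu)$. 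Pick a generator $g$ of $G_T$.

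The argument then uses two standard facts. First, $\kappa_\nu$ contains $\CC$, hence all roots of unity, so the relevant cyclotomic character is trivial and $G_Z/G_T$ acts trivially on the tame inertia $G_T$; therefore $G_Z$ centralizes $g$, i.e.\ $G_Z\subseteq C(g)$. (Alternatively: realize $\tilde\nu$ by a prime divisor $E$ on a smooth $G$-model of $L$, with $G_Z$ the setwise and $\langle g\rangle$ the pointwise stabilizer of $E$; then $g$ scales the normal bundle $N_E$ by a root of unity which conjugation by $G_Z$ preserves.) Second, for the decomposition field $K':=L^{G_Z}$ the valuation $\nu':=\tilde\nu|_{K'}$ has $e(\nu'/\nu)=1$ and $\kappa_{\nu'}=\kappa_\nu$ — the characteristic behaviour of the decomposition field, which one verifies by multiplicativity of $e$ and $f$ in the tower $K\subseteq K'\subseteq L$, noting that in $L/K'$ the valuation $\tilde\nu$ still has decomposition group $G_Z$ and inertia $G_T$. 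Granting these, functoriality of the residue maps for $\nu'$ over $\nu$ with ramification index $1$ gives
\[
\partial_{\nu'}\bigl(\mathrm{res}_{K'/K}a\bigr)=\mathrm{res}_{\kappa_{\nu'}/\kappa_\nu}\bigl(\partial_\nu a\bigr)=\partial_\nu a\ne 0 ,
\]
so $\mathrm{res}_{K'/K}a\ne 0$ in $H^i(\mathrm{Gal}(K'),\ZZ/p)$.

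Finally, $V|_{G_Z}$ is a generically free representation of $G_Z$ with $\CC(V|_{G_Z})^{G_Z}=K'$, and restriction commutes with stabilization (as already used in the proof of Lemma \ref{lKuennethstable}), so $\mathrm{res}_{K'/K}a$ is precisely the image of $a$ under $H^i_{\mathrm{s}}(G,\ZZ/p)\to H^i_{\mathrm{s}}(G_Z,\ZZ/p)$; thus this restriction is nonzero. Since $G_Z\subseteq C(g)$, the restriction to $G_Z$ factors as the restriction $H^i_{\mathrm{s}}(G,\ZZ/p)\to H^i_{\mathrm{s}}(C(g),\ZZ/p)$ followed by restriction to $G_Z$, and as the composite is nonzero on $a$, so is $\mathrm{res}_{C(g)}$, which is the assertion. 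The step I expect to require the most care is the bookkeeping of ramification indices and residue fields in the tower $K\subseteq K'\subseteq L$ — checking that passing to the decomposition field leaves both $\kappa_\nu$ and the residue class $\partial_\nu a$ literally unchanged — together with invoking the precisely correct form (with the ramification-index factor) of residue functoriality from \cite{GMS}.
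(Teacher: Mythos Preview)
Your argument is correct and follows essentially the same idea as the paper's: the nonzero residue $\partial_\nu(a)$ forces $a$ to be nontrivial on the decomposition group, which lands in $C(g)$ because the (tame) inertia is central there. The paper's proof is more compressed: instead of passing to the decomposition field $K'=L^{G_Z}$ and invoking residue functoriality for the extension $K'/K$, it simply observes that $\partial_\nu$ factors as $H^i(\mathrm{Gal}(K))\to H^i(\mathrm{Gal}(K_\nu))\to H^{i-1}(\mathrm{Gal}(\kappa_\nu))$ through the absolute decomposition group $\mathrm{Gal}(K_\nu)\subset\mathrm{Gal}(K)$, whose image in $G$ lies in $C(g)$; hence $\mathrm{Gal}(K_\nu)\subset\mathrm{Gal}(\CC(V)^{C(g)})$, and nonvanishing of the restriction of $a$ to $\mathrm{Gal}(K_\nu)$ immediately gives nonvanishing in $H^*_{\mathrm{s}}(C(g))$. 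Your route via the finite extension $L/K$, the decomposition field, and the $e=1$, $\kappa_{\nu'}=\kappa_\nu$ bookkeeping is a faithful unwinding of the same mechanism, just carried out on the finite rather than the absolute level.
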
 

\begin{proof}
With the notation established above we have the maps of groups
\[
I \subset \mathrm{Gal} (K_{\nu }) \subset \mathrm{Gal} (K) \twoheadrightarrow G\, ,
\]
and the image of the inertia subgroup $I$ in $G$ is cyclic, generated by $g$ say, and the image of the decomposition group $\mathrm{Gal} (K_{\nu })$ in $G$ belongs to the centralizer $C(g)$. As the residue map $\partial_{\nu }$ factors over $\mathrm{Gal} (K_{\nu })$, we obtain the assertion.
\end{proof}

Recall the exact sequence 
\[
1 \to I \to \Gamma_{K_{\nu}} \to \Gamma_{\kappa_{\nu }}\to 1
\]
where $I$ is the inertia subgroup of the decomposition group $\Gamma_{K_{\nu}}$ associated to the valuation $\nu$ of $K= \CC (V)^G$. The following Lemma allows one to increase the usefulness of Lemma \ref{lNoUnramified} in inductive arguments further.

\begin{lemma}\xlabel{lResidue}
Let $G$ be a finite group and let $a \in H^{\ast }_{\mathrm{s}} (G, \: \ZZ/p )$ be a stable class. For $\nu$ a divisorial valuation of $K$, the image of the topologically cyclic inertia subgroup $I$ in $G$ is cyclic, generated by $h$ say. There is a natural class $d_{\nu } (a) \in H^{n-1}_{\mathrm{s}} (Z_G(h), \: \ZZ/p )$ such that the residue $\partial_{\nu } (a) \in H^{n-1} (\Gamma_{\kappa_{\nu}}, \: \ZZ /p )$ is the pull-back of $d_{\nu } (a)$ to $\Gamma_{\kappa_{\nu}}$ via the maps $\Gamma_{\kappa_{\nu}} \subset \Gamma_{K_{\nu}} \simeq I \oplus \Gamma_{\kappa_{\nu}} \to Z(h)$. 
\end{lemma}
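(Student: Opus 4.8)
The plan is to obtain $d_{\nu}(a)$ by pulling the restriction of $a$ to $Z_G(h)$ back along the multiplication map of a product of groups, and then to read off the residue by a Künneth computation on the decomposition group. Write $n$ for the degree of $a$, put $H:=Z_G(h)$ and $C:=\langle h\rangle\subseteq H$; since $h$ centralises itself, $C$ is a \emph{central} cyclic subgroup of $H$. Using the splitting $\Gamma_{K_{\nu}}\simeq I\oplus\Gamma_{\kappa_{\nu}}$, the composite $\phi\colon\Gamma_{K_{\nu}}\hookrightarrow\Gamma_{K}\twoheadrightarrow G$ has image contained in $H$: the image of $I$ is $C$ by hypothesis, and $\Gamma_{\kappa_{\nu}}$ commutes with $I$ inside $\Gamma_{K_{\nu}}$, so its image centralises $h$. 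Write $\phi_1:=\phi|_I\colon I\twoheadrightarrow C$ and $\phi_2:=\phi|_{\Gamma_{\kappa_{\nu}}}\colon\Gamma_{\kappa_{\nu}}\to H$ for the two pieces of $\phi$ coming from the splitting; $\phi_2$ is precisely the map appearing in the statement. Because $C$ is central in $H$, multiplication $m\colon C\times H\to H$, $(c,x)\mapsto cx$, is a group homomorphism and $\phi=m\circ(\phi_1\times\phi_2)$.

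The first step is to check that the pullback $m^{\ast}$ carries stable cohomology classes to stable cohomology classes. This follows from the factorisation $m=\mathrm{pr}_H\circ\theta$, where $\mathrm{pr}_H\colon C\times H\to H$ is the projection — whose pullback descends to the inclusion $1\otimes(-)$ in the Künneth splitting $H^{\ast}_{\mathrm{s}}(C\times H,\ZZ/p)=H^{\ast}_{\mathrm{s}}(C,\ZZ/p)\otimes H^{\ast}_{\mathrm{s}}(H,\ZZ/p)$ of Lemma~\ref{lKuennethFactorAbelian}, hence respects stability — and $\theta\colon C\times H\to C\times H$, $(c,x)\mapsto(c,cx)$, is a group automorphism, whose pullback preserves stability by functoriality of $H^{\ast}_{\mathrm{s}}$ under group isomorphisms. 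Now, by Lemma~\ref{lKuennethFactorAbelian} together with the facts that $H^i_{\mathrm{s}}(C,\ZZ/p)=0$ for $i\ge 2$ and $H^1_{\mathrm{s}}(C,\ZZ/p)=\ZZ/p\cdot\chi$ when $p\mid|C|$ (and $H^1_{\mathrm{s}}(C,\ZZ/p)=0$ otherwise), I can write, in $H^n_{\mathrm{s}}(C\times H,\ZZ/p)$,
\[
m^{\ast}\bigl(\mathrm{res}_H a\bigr)=1\otimes\mathrm{res}_H a+\chi\otimes d_{\nu}(a),
\]
the first summand because $m$ restricted to $\{1\}\times H$ is the identity; this defines the visibly natural class $d_{\nu}(a)\in H^{n-1}_{\mathrm{s}}(H,\ZZ/p)$, and $d_{\nu}(a)=0$ when $p\nmid|C|$.

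To identify the residue, note that $a$ restricts to $\Gamma_{K_{\nu}}$ as the pullback of $\mathrm{res}_H a$ along $\phi$, so $\mathrm{res}_{\Gamma_{K_{\nu}}}(a)=(\phi_1\times\phi_2)^{\ast}m^{\ast}(\mathrm{res}_H a)$. Under the Künneth splitting $H^{\ast}(I\oplus\Gamma_{\kappa_{\nu}},\ZZ/p)=H^{\ast}(\Gamma_{\kappa_{\nu}},\ZZ/p)\oplus\chi_I\otimes H^{\ast-1}(\Gamma_{\kappa_{\nu}},\ZZ/p)$, with $\chi_I$ a generator of $H^1(I,\ZZ/p)$, the summand $1\otimes\mathrm{res}_H a$ maps to $1\otimes\phi_2^{\ast}(\mathrm{res}_H a)$, which lies in $H^{\ast}(\Gamma_{\kappa_{\nu}},\ZZ/p)$ and is therefore killed by $r$, while $\chi\otimes d_{\nu}(a)$ maps to $\phi_1^{\ast}(\chi)\otimes\phi_2^{\ast}(d_{\nu}(a))$; as $\phi_1$ is surjective, $\phi_1^{\ast}(\chi)$ is a generator of $H^1(I,\ZZ/p)$ when $p\mid|C|$ (and both $\chi$ and $\partial_{\nu}(a)$ vanish when $p\nmid|C|$). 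Normalising $\chi$ so that $\phi_1^{\ast}(\chi)$ is the generator implicit in the definition of $r$, one obtains $\partial_{\nu}(a)=r\bigl(\mathrm{res}_{\Gamma_{K_{\nu}}}(a)\bigr)=\phi_2^{\ast}(d_{\nu}(a))$, which is the claim.

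The only substantive point in the above is the assertion that $d_{\nu}(a)$ is genuinely a stable class, i.e.\ that $m^{\ast}$ descends to stable cohomology; the factorisation $m=\mathrm{pr}_H\circ\theta$ reduces this to Lemma~\ref{lKuennethFactorAbelian} together with the stability of $H^{\ast}_{\mathrm{s}}$ under group automorphisms, the remainder being bookkeeping with Künneth decompositions and the definition of $r$. It is also worth recording that $\phi_2^{\ast}(d_{\nu}(a))$ does not depend on the chosen splitting of $1\to I\to\Gamma_{K_{\nu}}\to\Gamma_{\kappa_{\nu}}\to1$: changing the splitting alters $\phi_2$ by a central twist by a homomorphism $\tau\colon\Gamma_{\kappa_{\nu}}\to C$, and the resulting correction term equals $\tau^{\ast}(\chi)\cup\phi_2^{\ast}(d')$ with $d'$ the $\chi$-component of $m^{\ast}(d_{\nu}(a))$; but $d'=0$ by coassociativity of $m$ (the identity $m\circ(\mathrm{id}_C\times m)=m\circ(\mu_C\times\mathrm{id}_H)$, $\mu_C$ the multiplication of $C$) together with the primitivity of the character $\chi$, so $d_{\nu}(a)$ is in fact $m$-primitive — compatibly with the splitting-independence of $r$.
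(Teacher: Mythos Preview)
Your proof is correct and follows essentially the same route as the paper: your multiplication map $m\colon C\times H\to H$ is precisely the paper's map $b=(i_{\langle h\rangle},\mathrm{id}_{Z(h)})$, and both define $d_{\nu}(a)$ as the $H^1_{\mathrm{s}}(C)\otimes H^{n-1}_{\mathrm{s}}(H)$-component of $m^{\ast}(\mathrm{res}_H a)$ in the K\"unneth decomposition of Lemma~\ref{lKuennethFactorAbelian}. Your additional care in verifying that $m^{\ast}$ preserves stability (via the factorisation $m=\mathrm{pr}_H\circ\theta$) and in checking splitting-independence goes slightly beyond what the paper writes out, but the underlying construction is identical.
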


\begin{proof}
Consider the commutative diagram
\[
\xymatrix{ 1 \ar[r] & I \ar[d] \ar[r] \ar@{-->}[rdd]^f &\Gamma_{K_{\nu}} \ar[d] \ar[r] & \Gamma_{\kappa_{\nu}} \ar[d] \ar[r] \ar[r] \ar@{-->}[ldd]^g& 1 \\
1 \ar[r] & \langle h\rangle  \ar[r] & Z(h)  \ar[r] & Z(h)/\langle h \rangle  \ar[r] & 1\\
1 \ar[r] & \langle h\rangle \ar[u]^a \ar[r] & \langle h \rangle \times Z(h)  \ar[u]^b \ar[r] & Z(h) \ar[u]^c \ar[r] & 1}
\] 
where the arrow $a$ is the identity, $b$ is the couple $(i_{\langle h \rangle }, \: \mathrm{id}_{Z(h)})$ where $i_{\langle h \rangle }\, :\, \langle h \rangle \to Z(h)$ is the inclusion, and $c$ is the projection. Here $f$ and $g$ are defined as follows: the extension defining $\Gamma_{K_{\nu}}$ splits, $\Gamma_{K_{\nu}} \simeq \hat{\ZZ} \oplus \Gamma_{\kappa_{\nu}}$, so the map $\Gamma_{\kappa_{\nu}} \to Z(h)/\langle h \rangle $ lifts to a map $g_1\, :\, \Gamma_{\kappa_{\nu}} \to Z(h)$. The map $g$ is simply $(1, \: g_1)$. The map $f$ is $(f_1, \: 1)$ where $f_1\,:\, I \to \langle h \rangle $ is the natural map. As $\hat{\ZZ}$ has cohomological dimension one, $H^i (\Gamma_{K_{\nu}}, \: \ZZ/p ) \simeq H^{i-1} (\Gamma_{\kappa_{\nu}}, \: \ZZ/p) \oplus H^i (\Gamma_{\kappa_{\nu }}, \ZZ/p)$ and the residue map $\partial_{\nu}$ was defined by the restriction of $a$ to $\Gamma_{K_{\nu}}$ and projecting to $H^{i-1} (\Gamma_{\kappa_{\nu}}, \: \ZZ/p)$. By the commutativity of the diagram, we may thus define a class $d_{\nu } (a)$ with the requested properties as follows: we restrict $a\in H^n_{\mathrm{s}} (G, \: \ZZ/p)$ to $Z(h)$, and then take the pull-back $b^{\ast} (\mathrm{res}^G_{Z(h)}(a)) \in H^n_{\mathrm{s}} (\langle h \rangle \times Z (h), \: \ZZ/p )$ and project this unto the component $H^{n-1}_{\mathrm{s}} (Z(h), \: \ZZ/p)$ in the K\"unneth decomposition, using Lemma \ref{lKuennethFactorAbelian}. This defines  $d_{\nu } (a) \in H^{n-1}_{\mathrm{s}} (Z(h), \: \ZZ/p )$. 
\end{proof}

Define a subgroup $H$ of $G$ recursively to be an \emph{iterated centralizer} if it is the centralizer of an element in $G$ or a centralizer of an element inside another iterated centralizer. 

\begin{corollary}\xlabel{cInduction}
Assume that $G$ is such that each iterated centralizer has trivial unramified cohomology. Then any element $a\in H^n_{\mathrm{s}} (G, \: \ZZ/p)$ is nontrivial on some abelian $p$-subgroup. 
\end{corollary}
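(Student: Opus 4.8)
The plan is to induct on the cohomological degree $n$, the inductive hypothesis being that the statement of the corollary holds in all degrees $< n$ for \emph{every} finite group all of whose iterated centralizers have trivial unramified cohomology. The case $n=0$ is immediate, since $H^0_{\mathrm{s}}(G,\ZZ/p)=\ZZ/p$ restricts isomorphically to the trivial subgroup; so I would fix $n\ge 1$ and a nonzero class $a\in H^n_{\mathrm{s}}(G,\ZZ/p)$.

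First I would apply the hypothesis to $G$ itself — which is an iterated centralizer, being the centralizer of the identity — to get $H^n_{\mathrm{nr}}(G,\ZZ/p)=0$; since $a$ is nonzero of positive degree it is therefore not unramified, so there is a divisorial valuation $\nu$ of $K=\CC(V)^G$ with $\partial_{\nu}(a)\neq 0$. Letting $h\in G$ generate the cyclic image of the inertia subgroup and $d_{\nu}(a)\in H^{n-1}_{\mathrm{s}}(Z_G(h),\ZZ/p)$ be the class furnished by Lemma \ref{lResidue}, the fact that $\partial_{\nu}(a)$ is a pull-back of $d_{\nu}(a)$ forces $d_{\nu}(a)\neq 0$. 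Recall that $d_{\nu}(a)$ is, by construction, the component of $b^{\ast}(\mathrm{res}^G_{Z_G(h)}a)$ in the summand $H^1_{\mathrm{s}}(\langle h\rangle)\otimes H^{n-1}_{\mathrm{s}}(Z_G(h))$ of the Künneth decomposition of Lemma \ref{lKuennethFactorAbelian}, where $b\colon \langle h\rangle\times Z_G(h)\to Z_G(h)$ is multiplication; in particular $H^1_{\mathrm{s}}(\langle h\rangle,\ZZ/p)\neq 0$, so $p\mid |h|$, and I would fix a generator $x$ of $H^1_{\mathrm{s}}(\langle h\rangle,\ZZ/p)\cong\ZZ/p$ so that this component equals $x\otimes d_{\nu}(a)$. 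Since the iterated centralizers of $Z_G(h)$ are (immediately from the recursive definition) among those of $G$, the inductive hypothesis applies to $Z_G(h)$ in degree $n-1$ and yields an abelian $p$-subgroup $A\subseteq Z_G(h)$ with $\mathrm{res}^{Z_G(h)}_A d_{\nu}(a)\neq 0$.

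The final step is to transport this nonvanishing back to $a$. I would let $h_0$ generate the Sylow $p$-subgroup of the cyclic group $\langle h\rangle$ and put $P:=\langle h_0\rangle\cdot A$; since $h$ is central in $Z_G(h)$, the subgroups $\langle h_0\rangle$ and $A$ commute, so $P$ is an abelian $p$-subgroup of $G$ and multiplication gives a surjection $\mu\colon \langle h_0\rangle\times A\to P$. Because the composite $\langle h_0\rangle\times A\hookrightarrow \langle h\rangle\times Z_G(h)\xrightarrow{b} Z_G(h)$ also factors as $\langle h_0\rangle\times A\xrightarrow{\mu}P\hookrightarrow Z_G(h)$, pulling $\mathrm{res}^G_{Z_G(h)}a$ back both ways and comparing the $H^1_{\mathrm{s}}(\langle h_0\rangle)\otimes H^{n-1}_{\mathrm{s}}(A)$-components, via naturality of the decomposition of Lemma \ref{lKuennethFactorAbelian}, shows that this component of $\mu^{\ast}(\mathrm{res}^G_P a)$ equals $(\mathrm{res}^{\langle h\rangle}_{\langle h_0\rangle}x)\otimes(\mathrm{res}^{Z_G(h)}_A d_{\nu}(a))$. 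Now $\mathrm{res}^{\langle h\rangle}_{\langle h_0\rangle}$ is an isomorphism on mod $p$ cohomology in degree $1$ (by Lemma \ref{lKuennethFactorAbelian}, since the complementary Sylow subgroup of $\langle h\rangle$ contributes nothing mod $p$), so the first factor generates $H^1_{\mathrm{s}}(\langle h_0\rangle,\ZZ/p)$ and the second is nonzero; hence this component, and so $\mu^{\ast}(\mathrm{res}^G_P a)$, and so $\mathrm{res}^G_P a$, is nonzero. Thus $a$ is nontrivial on the abelian $p$-subgroup $P$, which closes the induction.

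I expect the one point genuinely requiring care to be the naturality assertion used in the last step: that restriction along $\mathrm{id}_{\langle h\rangle}\times(A\hookrightarrow Z_G(h))$, and along $\langle h_0\rangle\hookrightarrow\langle h\rangle$, respects the Künneth decomposition of Lemma \ref{lKuennethFactorAbelian}, acting componentwise as $\mathrm{id}\otimes\mathrm{res}$. Since that decomposition is obtained somewhat indirectly in the proof of Lemma \ref{lKuennethFactorAbelian} (through tori and iterated circle fibrations) rather than by a universal property, this compatibility has to be verified, though it should reduce to the evident naturality of the external-product map $H^{\ast}_{\mathrm{s}}(G_1)\otimes H^{\ast}_{\mathrm{s}}(A_1)\to H^{\ast}_{\mathrm{s}}(G_1\times A_1)$ together with the fact that it is an isomorphism in the cases at hand. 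Everything else is bookkeeping within the inertia/decomposition-group picture already set up in Lemmas \ref{lNoUnramified} and \ref{lResidue}.
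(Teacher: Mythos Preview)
Your proof is correct and follows essentially the same route as the paper: induct on the cohomological degree, use the trivial unramified cohomology to find $\nu$ with $\partial_\nu(a)\neq 0$, invoke Lemma~\ref{lResidue} to obtain a nonzero $d_\nu(a)\in H^{n-1}_{\mathrm{s}}(Z_G(h),\ZZ/p)$, apply the inductive hypothesis to $Z_G(h)$ to find an abelian $p$-subgroup $A$ detecting $d_\nu(a)$, and then read off from the K\"unneth construction of $d_\nu(a)$ that $a$ itself is detected on the abelian group generated by $h$ and $A$. The paper stops there, with $\langle h,A\rangle$; you go one step further and replace $h$ by the generator $h_0$ of its $p$-Sylow so that the resulting subgroup $P=\langle h_0\rangle\cdot A$ is genuinely a $p$-group, which is a clean way to tidy up a point the paper leaves implicit (one could equally well pass to the $p$-Sylow of $\langle h,A\rangle$ at the end). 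Your explicit flagging of the K\"unneth naturality is also appropriate: the paper simply asserts that the construction of $d_\nu(a)$ makes the last step evident.
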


\begin{proof}
We use induction on the cohomological degree, hence assume that every element in $H^{i}_{\mathrm{s}} (H, \: \ZZ/p)$, for all $i<n$, and for all iterated centralizers $H$ in $G$, is nontrivial on some abelian subgroup. By assumption, we get from Lemma \ref{lResidue} that $d_{\nu} (a) \in H^{n-1}_{\mathrm{s}} (Z(h), \: \ZZ/p )$ is nontrivial for some $h$. Hence $d_{\nu } (a)$ is nontrivial on some abelian $p$-subgroup $A$ of $Z(h)$. By the construction of $d_{\nu } (a)$ in Lemma \ref{lResidue} we have that $a$ will then be nontrivial when restricted to $H^n_{\mathrm{s}} (\langle h, \: A\rangle , \: \ZZ/p )$ where $\langle h, \: A\rangle $ is the abelian subgroup of $G$ generated by $h$ and $A$. 
\end{proof}

\

The Steenrod power operations $Sq^i$, $P^j$ (see \cite{Steen}, \cite{A-M} II.2) are natural transformations
\begin{gather*}
Sq^i \, : \, H^j (X, \: \ZZ/2 ) \to H^{j+i} (X, \: \ZZ/2 ), \\ P^i \, :\, H^j (X,\: \ZZ/p ) \to H^{j+2i (p-1)} (X, \: \ZZ/p ) , \; p \; \mathrm{an} \; \mathrm{odd}\; \mathrm{prime} \, ,
\end{gather*}
on the category of CW-complexes with continuous maps $f\, : \, X\to Y$. By functoriality, applied to the map $\mathrm{B} G \to (V^L/G) -D$, where $D$ is some divisor, $Sq^i$, $P^j$ induce operations on $H^{\ast }_{\mathrm{s}} (G, \: \ZZ/p)$. 

\

For later use, we recall here the structure theorem for the cohomology of wreath products due to Steenrod \cite{Steen}, Section VII, see also \cite{Mann78}, Theorem 3.1 and \cite{A-M}, IV. 4, Theorem 4.1. We suppress the $\ZZ/p\ZZ$-coefficients in cohomology groups now, i.e. write $H^* (X)$ for $H^* (X, \ZZ /p \ZZ )$.

\begin{theorem}\xlabel{tSteenrod}
Let $H$ be a group, and let $H \wr \ZZ/p = (H)^p\rtimes \ZZ /p$ be the wreath product where $\ZZ/p\ZZ$ acts by cyclically permuting the copies of $H$. Let $\mathrm{id} \times \Delta^p\, :\, \ZZ/p\ZZ \times H \to \ZZ/p\ltimes (H)^p$ be the inclusion $(\mathrm{id} \times \Delta^p ) (z, \: a) = (z ; \: (a, \dots , a))$ ($p$-times $a$) and denote by $t\, :\, H^* (H^p ) \to H^* (H\wr \ZZ /p  )$ the transfer. Then the sequence 
\begin{gather*}
H^* (H^p ) \stackrel{t}{\rightarrow } H^* ( H \wr \ZZ /p ) \stackrel{( \mathrm{id} \times \Delta^p)^*}{\longrightarrow}  H^* ( \ZZ/p\ZZ \times H  )
\end{gather*}
is exact.

\

Moreover, for any $u\in H^{j} (H)$ there is a class $P (u) \in H^{jp} ( H \wr \ZZ/ p )$ (constructed by Steenrod) such that
\begin{itemize}
\item[(i)]
If $j \, :\, H^p \to \ZZ /p\ZZ \ltimes (H)^p$ is the natural inclusion, then $j^* (P(u)) = u^{\otimes p}$.
\item[(ii)]
In the K\"unneth decomposition of  $(\mathrm{id} \times \Delta^p )^* (P(u))$ in $H^* ( \ZZ/p\ZZ \times H  )$ we have
\[
(\mathrm{id} \times \Delta^p )^* (P(u)) = \Sigma w_k \otimes D_k (u)
\]
where $w_k$ is a generator of $H^k (\ZZ /p\ZZ )$ and $D_k \, :\, H^q (H) \to H^{pq -k} (H)$ are homomorphisms which satisfy
\item[(iii)]
\begin{gather*}
\beta D_{2k} (u) = D_{2k-1} (u),\; \beta D_{2k-1} (u) = 0, \; \beta D_0 (u) =0  
\end{gather*}
where $\beta$ is the Bockstein homomorphism, i.e. connecting homomorphism in the long exact sequence coming from the short exact sequence
\[
0 \to \ZZ / p \ZZ \to \ZZ / p^2 \ZZ \to \ZZ /p \ZZ \to 0 \, .
\]
The maps $D_k$ are used originally by Steenrod to define the Steenrod powers $P^i$, hence the Hopf algebras (Steenrod algebras) $\mathcal{A}(p)$. More precisely, 
\[
P^i (u) = (-1)^{i+ mj(j+1)/2} (m!)^{j} D_{(p-1)(j-2i)} (u) \in H^{j+ 2(p-1)i} (H, \: \ZZ /p \ZZ ) \, .
\]
\end{itemize}
In this setting, any $c \in H^* ( H\wr \ZZ /p )$ can be written as
\[
c = t( c_1) + c_2 \cdot P (c_3)
\]
with $c_1 \in H^* (H^p)$, $c_2 \in H^* (\ZZ /p \ZZ )$ and $c_3 \in H^* (H)$. 
\end{theorem}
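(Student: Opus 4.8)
This structure theorem is classical, due to Steenrod; I indicate only the strategy, referring to \cite{Steen}, Section VII, and \cite{A-M}, IV.4 (see also \cite{Mann78}, Theorem 3.1) for the details. The plan is to run the Lyndon--Hochschild--Serre spectral sequence $E_2^{s,t} = H^s(\ZZ/p, \: H^t(H^p)) \implies H^{s+t}(H\wr\ZZ/p)$ of the extension $1 \to (H)^p \to H\wr\ZZ/p \to \ZZ/p \to 1$, after first analysing the coefficient module. By the K\"unneth formula, $H^*(H^p) = H^*(H)^{\otimes p}$. Fixing a homogeneous $\ZZ/p$-basis of $H^*(H)$, the induced basis of tensor monomials of $H^*(H)^{\otimes p}$ is permuted by the cyclic group, with orbits either free of size $p$ or consisting of a single diagonal monomial $u^{\otimes p}$. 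Hence, as a $\ZZ/p[\ZZ/p]$-module, $H^*(H^p)$ is a direct sum of copies of the free module $\ZZ/p[\ZZ/p]$ and of the trivial module $\ZZ/p$, the latter spanned by the classes $u^{\otimes p}$ (for odd $p$ one must keep track of signs coming from the grading, but these do not affect the module structure).

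The free summands contribute only in filtration degree $s = 0$, where $H^0(\ZZ/p, \: \ZZ/p[\ZZ/p]) = \ZZ/p$ and $H^s = 0$ for $s > 0$ by Shapiro's lemma; these classes are accounted for by the image of the transfer $t\colon H^*(H^p) \to H^*(H\wr\ZZ/p)$, and one checks $(\mathrm{id}\times\Delta^p)^*\circ t = 0$ (by the Mackey formula the relevant double coset is trivial, and the resulting transfer for the inclusion $H\hookrightarrow \ZZ/p\times H$ vanishes in characteristic $p$), so the image of $t$ lies in the kernel of restriction to the diagonal. The trivial summands contribute $H^*(\ZZ/p, \: \ZZ/p)\otimes\langle u^{\otimes p}\rangle$; since $H^*(\ZZ/p, \: \ZZ/p)$ is one-dimensional in each degree, this ``diagonal part'' of the $E_2$-page is a free $H^*(\ZZ/p)$-module on the classes $u^{\otimes p}$, and it is exactly this part that the classes $P(u)$ are designed to realise.

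The crucial step is then the construction of the $P(u)$ and the proof that they, together with their $H^*(\ZZ/p)$-multiples and the transfer classes, exhaust the $E_2$-page, which forces degeneration at $E_2$. Using $\mathrm{B}(H\wr\ZZ/p) \simeq \mathrm{E}\ZZ/p\times_{\ZZ/p}(\mathrm{B}H)^p$, one represents $u\in H^j(H)$ by a map $\mathrm{B}H\to K(\ZZ/p, j)$, takes the $p$-fold product (which is $\ZZ/p$-equivariant for the permutation action), passes to Borel constructions to obtain $\mathrm{B}(H\wr\ZZ/p) \to \mathrm{E}\ZZ/p\times_{\ZZ/p}K(\ZZ/p, j)^p$, and pulls back the universal total power class, defining $P(u)\in H^{jp}(H\wr\ZZ/p)$. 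Property (i) is restriction to a fibre of the Borel construction; property (ii), the formula $(\mathrm{id}\times\Delta^p)^*P(u) = \sum_k w_k\otimes D_k(u)$, follows by reducing to the universal case $H = K(\ZZ/p, j)$ and computing the restriction to $\ZZ/p\times K(\ZZ/p, j)$ from the known cohomology of that space --- this is precisely Steenrod's route to \emph{defining} $P^i$, whence the stated relation between the $D_k$ and the $P^i$ --- and property (iii) is naturality of the Bockstein together with $\beta^2 = 0$. A rank comparison with the $E_2$-page then yields degeneration; from this the exactness of the three-term sequence is immediate, and, with more care, the normal form $c = t(c_1) + c_2\cdot P(c_3)$.

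The main obstacle I expect is extracting this clean normal form from the degeneration: since $u\mapsto P(u)$ is not additive, one cannot simply read off $c = t(c_1) + c_2\cdot P(c_3)$ from a basis of $E_\infty$, but must argue by descending induction on the filtration, at each stage subtracting a suitable transfer class and a term of the form $c_2\cdot P(c_3)$ so as to annihilate the leading associated-graded piece, using that the diagonal part of $E_\infty$ is generated over $H^*(\ZZ/p)$ by the power classes. Careful bookkeeping of the homomorphisms $D_k$, of the cup-product structure, and of signs for odd $p$ throughout is the technically delicate part; all of this is carried out in \cite{Steen}.
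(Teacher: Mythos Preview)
The paper does not actually prove this theorem: it is stated as a recalled result, with the sentence ``we recall here the structure theorem for the cohomology of wreath products due to Steenrod \cite{Steen}, Section VII, see also \cite{Mann78}, Theorem 3.1 and \cite{A-M}, IV.4, Theorem 4.1'' serving in lieu of proof. Your sketch is therefore already more than the paper supplies; it correctly outlines the classical argument from those same references (Lyndon--Hochschild--Serre spectral sequence of $1\to H^p\to H\wr\ZZ/p\to\ZZ/p\to 1$, decomposition of $H^*(H)^{\otimes p}$ as a $\ZZ/p[\ZZ/p]$-module into free and trivial summands, construction of $P(u)$ via the equivariant power map on Borel constructions, and degeneration at $E_2$), so there is nothing to compare beyond noting that your proposal and the paper point to the same sources.
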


Here we view $H^* ( H\wr \ZZ /p )$ as a module over $H^* (\ZZ /p \ZZ )$ via the cohomology pull-back induced by the surjection $H \wr \ZZ /p \to \ZZ /p \ZZ$. The Steenrod operations $P^i$ have the following formal properties:
\begin{itemize}
\item[(1)]
$P^0 = \mathrm{id}$.
\item[(2)]
If $\dim (x) = 2n$, then $P^n (x) = x^p$.
\item[(3)]
If $2i > \dim (x)$, then $P^i (x) =0$.
\item[(4)]
(Cartan formula) $P^i (x\cup y) = \sum_{j=0}^i P^j (x) \cup P^{i-j} (y)$.
\end{itemize}

\

A consequence of the Bloch-Kato conjecture (now a theorem by the work of Voevodsky, Rost and many others) is 

\begin{lemma}\xlabel{lCohomologyOperations}
The Steenrod cohomology operations $Sq^i$, $P^j$ are zero in stable cohomology $H^{\ast }_{\mathrm{s}} (G, \: \ZZ/p\ZZ )$.
\end{lemma}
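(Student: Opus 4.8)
The plan is to exploit the fact, provided by the Bloch--Kato conjecture (now a theorem), that the Galois cohomology ring $H^*(\mathrm{Gal}(K),\ZZ/p\ZZ)$ of any field $K$ containing a primitive $p$-th root of unity is generated in degree $1$, i.e. is a quotient of the exterior algebra (for $p$ odd, tensor a polynomial algebra on the Bocksteins) on $H^1$. Concretely, I would first recall that stable cohomology $H^*_{\mathrm{s}}(G,\ZZ/p\ZZ)$ is by definition the image of $H^*(G,\ZZ/p\ZZ)$ in $H^*(\mathrm{Gal}(K),\ZZ/p\ZZ)$, where $K=\CC(V)^G$; in particular every stable class is a sum of products of stable classes of degree $1$ together with Bocksteins of such. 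So it suffices to show that the Steenrod operations $P^j$ (resp.\ $Sq^i$) vanish on degree-$1$ classes $x$ and on their Bocksteins $\beta x$ inside stable cohomology, and then invoke the Cartan formula to propagate to all of $H^*_{\mathrm{s}}$.

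The key computation is then purely about the operations. For a class $x$ of degree $1$: by property (3) above, $P^j(x)=0$ for $2j>1$, i.e.\ for all $j\ge 1$, while $P^0(x)=x$. Similarly $Sq^i(x)=0$ for $i>1$, $Sq^1(x)=\beta x = x^2$, and $Sq^0(x)=x$ — so for $Sq^1$ one needs the separate observation that squares of one-dimensional classes, equivalently Bocksteins of one-dimensional classes, vanish stably. For the Bockstein of a degree-$1$ class (a two-dimensional class $y=\beta x$): one uses the Ádem/Cartan relations together with $\beta^2=0$ and the instability condition to write $P^j(\beta x)$ in terms of $\beta$ applied to lower operations on $x$, all of which are forced to be zero by the degree-$1$ vanishing just established. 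Hence every $P^j$, $Sq^i$ with positive lower index annihilates the subalgebra generated by $H^1_{\mathrm{s}}$ and its Bocksteins. Since, by Bloch--Kato, $H^*_{\mathrm{s}}(G,\ZZ/p\ZZ)$ lies in the image of $H^*(\mathrm{Gal}(K))$ which is generated over $H^0$ by $H^1$ and $\beta H^1$ — using that $\CC\subset K$ so $\mu_p\subset K$ — the Cartan formula propagates the vanishing to the whole stable cohomology ring.

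The one point that requires genuine care — and which I expect to be the main obstacle — is the very first reduction: one must know that the map $H^*(G,\ZZ/p\ZZ)\to H^*(\mathrm{Gal}(K),\ZZ/p\ZZ)$ intertwines the topologically defined Steenrod operations on $H^*(\mathrm{B}G)$ (pulled back via $\mathrm{B}G\to (V^L/G)-D$) with the \'etale/Galois Steenrod operations, so that vanishing of the latter on the image implies vanishing of the former in $H^*_{\mathrm{s}}$. This compatibility of motivic/\'etale power operations with topological ones under the realization functor, and the Bloch--Kato generation statement, are precisely the ``consequence of the Bloch--Kato conjecture'' alluded to: a theorem of this type is due to Voevodsky et al., and one cites it. Once that compatibility is in hand, the degree-$1$ vanishing is immediate from $\dim x=1$ together with the instability axiom (3), and the Cartan formula closes the argument; no delicate estimates are needed beyond bookkeeping with the Ádem relations for the $\beta x$ case.
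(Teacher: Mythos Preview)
Your core idea coincides with the paper's: use Bloch--Kato to reduce to degree~$1$, then kill the operations by instability plus the Cartan formula. The paper, however, packages this more cleanly. Rather than working inside $H^*(\mathrm{Gal}(K),\ZZ/p)$ and worrying about \'etale versus topological Steenrod operations, it uses Bloch--Kato in the form ``$H^*(\mathrm{Gal}^{\mathrm{ab}}(K),\ZZ/p)\to H^*(\mathrm{Gal}(K),\ZZ/p)$ is surjective'' to produce a finite abelian quotient $\mathrm{Gal}(K)\twoheadrightarrow A$ through which the given stable class factors, and then realizes this geometrically as an honest regular map $f\colon (V^L/G)-D \to T\simeq (\CC^*)^m$ with $a'=f^*(\hat a)$. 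Since $H^*(T,\ZZ/p)$ is an exterior algebra on one-dimensional generators, the Steenrod operations vanish there, and ordinary topological functoriality finishes the argument. This sidesteps entirely the compatibility issue you flag as the ``main obstacle'': no \'etale power operations enter.

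Two smaller points on your write-up. First, the Bockstein discussion for odd $p$ is a red herring: Bloch--Kato (with $\mu_p\subset K$) already gives that $H^n(\mathrm{Gal}(K),\ZZ/p)$ is spanned by $n$-fold cup products of degree-$1$ classes, with no polynomial/Bockstein part --- you are conflating this with the cohomology of $B(\ZZ/p)$. Second, even in your framing the compatibility worry can be avoided: once the class is written as $x_1\cup\cdots\cup x_n$ in $H^*(\mathrm{Gal}(K))$, pass to a small enough Zariski open $U''$ on which all $x_i$ and the equality are represented (Artin comparison for $H^*$ and cup products suffices), and apply the \emph{topological} Steenrod operations there. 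For $p=2$ you do need $x^2=0$ for $x\in H^1$, which holds because $\sqrt{-1}\in K$; in the paper's torus picture this is automatic since the target cohomology is already exterior.
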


\begin{proof}
Any stable class $a \in H^{\ast }_{\mathrm{s}} (G, \: \ZZ/p\ZZ ) \subset H^{\ast } (\mathrm{Gal} (K), \: \ZZ/p )$ arises as the pull-back from some torus $T \simeq (\CC^{\ast })^m$, more precisely, $a$ has a representative $a'$ in the cohomology of $H^{\ast } ((V^L/G) -D, \: \ZZ/p )$ and there is a regular map $f\, :\, (V^L/G) - D \to  T$ together with a class $\hat{a} \in H^{\ast } (T, \: \ZZ/p )$ with $f^{\ast } (\hat{a}) = a'$. This follows from the fact, which is a consequence of the Bloch-Kato conjecture, that $H^{\ast } (\mathrm{Gal}^{\mathrm{ab}} (K), \: \ZZ/p ) \to H^{\ast } (\mathrm{Gal} (K), \: \ZZ/p )$ is surjective where $\mathrm{Gal}^{\mathrm{ab}} (K)$ is the abelianized Galois group $\mathrm{Gal}(K)/[\mathrm{Gal}(K), \: \mathrm{Gal}(K)]$. 

Since the Steenrod power operations are trivial in the cohomology algebra of the torus $T$ (which is an exterior algebra on one-dimensional generators), the assertion of the Lemma will then follow from the functoriality of the cohomology operations.

\

It remains to explain in some more detail how from the surjection $H^{\ast } (\mathrm{Gal}^{\mathrm{ab}} (K), \: \ZZ/p ) \to H^{\ast } (\mathrm{Gal} (K), \: \ZZ/p )$ we get the map $f\, :\, (V^L/G) - D \to  T$. There is a finite abelian quotient $\mathrm{Gal} (K) \twoheadrightarrow A$ such that $a$ is induced from a class $a''$ in the cohomology of $A$. The group $A$ corresponds to an unramified abelian covering $\tilde{X}$ of some nonempty open affine subvariety $X \subset V^L/G$. The coordinate ring $\CC [\tilde{X}]$ contains an arbitrary finite dimensional representation of $A$ as the regular functions on an $A$-orbit in $\tilde{X}$ are precisely the regular representation $\CC [A]$ and this is also a subrepresentation (not only a quotient) because $A$ is reductive. In particular, embedding $A$ in a torus of diagonal matrices in some $\mathrm{GL}_m (\CC )$, one obtains a dominant regular map from an open subset $X'$ of $X= \tilde{X}/A$, hence some $(V^L/G) - D$ to a torus $T=(\CC^m)^L /A\simeq (\CC^{\ast })^m$ for which it holds by construction that the image of the class $a''$ in the cohomology of $T$ induces a representative $a' \in H^{\ast } ((V^L/G) -D, \: \ZZ/p )$ of $a$. 
\end{proof}

Thus the techniques used in the present article are mainly topological in flavour; for the connection to motivic cohomology and further developments the reader may consult \cite{Kahn-Su00}, \cite{Kahn11}, \cite{Ngu1}, \cite{Ngu2}, \cite{TY11}.

\section{Detection by elementary abelian $p$-subgroups}\xlabel{sDetection}

In this section we want to prove the following Theorem.

\begin{theorem}\xlabel{tDetectionElAb}
Let $p$ as always be an odd prime and $\mathfrak{A}_N$ the alternating group on $N$ letters. Then $H^*_{\mathrm{s}} (\mathfrak{A}_N, \, \ZZ/p )$ is detected by elementary abelian $p$-subgroups. 
\end{theorem}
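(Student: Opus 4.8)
The plan is to combine Corollary \ref{cInduction} with a vanishing statement for unramified cohomology of the relevant groups. By Corollary \ref{cInduction}, if every iterated centralizer inside $\mathfrak{A}_N$ has trivial unramified cohomology with $\ZZ/p$-coefficients in positive degrees, then every stable class of $\mathfrak{A}_N$ is nontrivial on some abelian $p$-subgroup; replacing that abelian $p$-subgroup by an elementary abelian one follows by one further restriction, since the stable cohomology of a cyclic group $\ZZ/p^k\ZZ$ restricts faithfully onto its $\ZZ/p\ZZ$-subquotient at the stable level (stable cohomology of a finite cyclic $p$-group is that of $\ZZ/p$; this uses that the kernel of the stabilization map for abelian groups is the kernel of a free presentation, as in the proof of Lemma \ref{lKuennethFactorAbelian}). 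So the crux is: \emph{show that every iterated centralizer in $\mathfrak{A}_N$ has vanishing higher unramified cohomology mod $p$.}

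First I would identify the groups that actually occur as iterated centralizers. A centralizer of an element $g \in \mathfrak{S}_N$, in terms of the cycle type of $g$, is a direct product of wreath products $\mathfrak{S}_k \wr \mathfrak{S}_{m_k}$ (one factor for each cycle length $k$ occurring with multiplicity $m_k$), intersected with $\mathfrak{A}_N$; and iterating this operation inside such groups produces groups built from symmetric groups, cyclic groups, and their wreath and direct products, again with an index-$\le 2$ alternating-type condition. The key structural point is that, up to the $\ZZ/p$-cohomology we care about and after passing to $p$-Sylow considerations, these groups are products of pieces of the form $\ZZ/p^a \wr \ZZ/p \wr \cdots$, i.e. iterated wreath products of $\ZZ/p$'s and their centralizer-analogues — the "standard" $p$-groups whose cohomology Theorem \ref{tSteenrod} controls.

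The main engine is then: unramified cohomology mod $p$ vanishes in positive degrees for these iterated-wreath-product $p$-groups and for the symmetric and alternating groups themselves in the stable range. Concretely I would argue by induction on $N$ (and on the complexity of the iterated centralizer), using Lemma \ref{lNoUnramified}/Lemma \ref{lResidue}: a nonzero unramified class would have to restrict nontrivially to a centralizer one level down, but by the inductive hypothesis that centralizer has no unramified cohomology, and one then has to check — using the wreath-product structure theorem (Theorem \ref{tSteenrod}), the fact that Steenrod operations vanish stably (Lemma \ref{lCohomologyOperations}), and the Künneth principles (Lemmas \ref{lKuennethFactorAbelian}, \ref{lKuennethstable}) — that no stable class can be both detected on abelian subgroups and have all residues vanish unless it already comes from the stable cohomology, which for $\mathfrak{S}_N$ mod an odd prime is generated in a way visible on elementary abelian subgroups. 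For the symmetric groups this last input is the known computation of $H^*_{\mathrm{s}}(\mathfrak{S}_N,\ZZ/p)$ (which for odd $p$ is detected by elementary abelian $p$-subgroups), and the alternating case is extracted from it via the index-$2$ inclusion together with Theorem \ref{t2Coefficients}-style transfer arguments, noting $2$ is invertible mod $p$.

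The step I expect to be the main obstacle is controlling the transfer term $t(c_1)$ in the decomposition $c = t(c_1) + c_2\cdot P(c_3)$ of Theorem \ref{tSteenrod}: showing that the transfer from $H^*(H^p)$ does not produce stable unramified classes that escape detection by elementary abelian subgroups. This requires carefully tracking how the residue maps $\partial_\nu$ interact with the transfer and with the Steenrod class $P(c_3)$ — and using Lemma \ref{lCohomologyOperations} to kill the power-operation contributions that would otherwise obstruct the argument. Handling the alternating-versus-symmetric discrepancy (the class $u_1$ in Theorem \ref{t2Coefficients} has no odd-$p$ analogue precisely because the sign character is trivial mod odd $p$) is the secondary technical point, but it should be routine once the symmetric case and the centralizer bookkeeping are in place.
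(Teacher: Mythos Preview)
Your proposal has a genuine gap at the step ``replacing that abelian $p$-subgroup by an elementary abelian one follows by one further restriction.'' This is false as stated. If $A=\ZZ/p^k$ with $k\ge 2$ and $B\simeq \ZZ/p$ is its unique subgroup of order $p$, then the restriction $H^1_{\mathrm s}(A,\ZZ/p)\to H^1_{\mathrm s}(B,\ZZ/p)$ is zero: the generator of $H^1(A,\ZZ/p)=\mathrm{Hom}(A,\ZZ/p)$ is the quotient map $A\to A/pA$, and $B=p^{k-1}A\subset pA$ lies in its kernel. So for a general abelian $p$-group $A$ the restriction to the $p$-torsion subgroup $A[p]$ need not be injective on stable cohomology, and ``one further restriction'' does not work. (Your phrase ``$\ZZ/p\ZZ$-subquotient'' suggests you may be thinking of the \emph{quotient} $A\to A/pA$, but that is an inflation, not a restriction to a subgroup of $\mathfrak{A}_N$, and is useless for detection by subgroups.)

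This is exactly the difficulty the paper isolates and resolves with a separate structural step, Proposition~\ref{pReductionToSmaller}: any abelian $p$-subgroup of $\mathfrak{A}_{p^n}$ that is not a single cyclic group is contained in a product $\prod_j\mathfrak{A}_{t_j}$ with all $t_j<p^n$. Combined with the observation that a stable class of degree $\ge 2$ detected on an abelian $p$-group must be detected on one of rank $\ge 2$ (since $H^*_{\mathrm s}(A,\ZZ/p)$ is an exterior algebra on $\mathrm{rk}(A/pA)$ generators), this feeds an induction on $N$ that replaces ``abelian'' by ``elementary abelian.'' Your outline contains no substitute for this argument.

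A secondary point: your route to detection by \emph{abelian} $p$-subgroups via Corollary~\ref{cInduction} asks you to prove vanishing of $H^{>0}_{\mathrm{nr}}(-,\ZZ/p)$ for \emph{every} iterated centralizer in $\mathfrak{A}_N$, which is a large and heterogeneous class of groups (products of wreath products $\ZZ/n_i\wr\mathfrak{S}_{k_i}$ cut down by a parity condition). The paper avoids this by never invoking Corollary~\ref{cInduction} for $\mathfrak{A}_N$ directly. Instead it uses Ore's description of centralizers in complete monomial groups (Lemmas~\ref{lCentralizers1}--\ref{lCentralizers2}) to show that, after passing to $p$-Sylows, the centralizer of a $p$-power element in a product of groups $A\wr\mathfrak{A}_m$ (with $A$ an abelian $p$-group) is again contained in a group of the same shape; vanishing of unramified cohomology for these is immediate from stable rationality of wreath products (\cite{B-P}, Lemma~2.4). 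This keeps the induction inside a tractable family and is considerably cleaner than tracking all iterated centralizers. Your sketch of the vanishing (``using Theorem~\ref{tSteenrod}, Lemma~\ref{lCohomologyOperations}, \ldots'') does not engage with this bookkeeping, and the transfer discussion in your last paragraph is not relevant to this theorem---Theorem~\ref{tSteenrod} enters only later, in Section~\ref{sStableAlternating}.
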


We denote by $G_n=\ZZ/p\ZZ \wr \dots \wr \ZZ /p\ZZ$ ($n$ factors) the iterated wreath product of $n$ cyclic $p$-groups. This is the $p$-Sylow of $\mathfrak{A}_{p^n}$. If $N$ is arbitrary (not necessarily a power of $p$) expand it in base $p$:
\[
N = a_0 + a_1 p  + \dots + a_m p^m
\]
with $0\le a_j < p$, $a_m \neq 0$, and note that this gives rise to a natural inclusion
\[
i_{a_1, \dots , a_m} \, :\, \mathfrak{A}_{a_1, \dots , a_m}:= \prod_{1}^{a_1} \mathfrak{A}_{p} \times \prod_{1}^{a_2} \mathfrak{A}_{p^2} \times \dots \times \prod_{1}^{a_m} \mathfrak{A}_{p^m} \hookrightarrow \mathfrak{A}_{N}
\]
and that a $p$-Sylow subgroup in $\mathfrak{A}_N$ is given by the product of  $p$-Sylow subgroups in the factors in $\mathfrak{A}_{a_1, \dots , a_m}$. Hence it follows from Lemma \ref{lKuennethstable} that it is enough to prove Theorem \ref{tDetectionElAb} for $N=p^n$. 

\

First we will prove the weaker

\begin{theorem}\xlabel{tDetectionAb}
The stable cohomology $H^*_{\mathrm{s}} (\mathfrak{A}_N, \, \ZZ/p )$ is detected by abelian $p$-subgroups. 
\end{theorem}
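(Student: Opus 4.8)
The plan is to reduce, as already indicated, to the case $N = p^n$ and then to argue by induction on $n$, using the structure theorem for the cohomology of wreath products (Theorem \ref{tSteenrod}) together with Corollary \ref{cInduction}. The base case $n=0$ (or $n=1$, where $\mathfrak{A}_p$ has cyclic $p$-Sylow) is trivial. For the inductive step, recall that $G_n = G_{n-1} \wr \ZZ/p$ is the $p$-Sylow of $\mathfrak{A}_{p^n}$, and that restriction $H^*_{\mathrm{s}}(\mathfrak{A}_{p^n}, \ZZ/p) \to H^*_{\mathrm{s}}(G_n, \ZZ/p)$ is injective (a standard transfer argument, since the index is prime to $p$). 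So it suffices to detect the image of $H^*_{\mathrm{s}}(\mathfrak{A}_{p^n},\ZZ/p)$ inside $H^*_{\mathrm{s}}(G_n, \ZZ/p)$ by abelian $p$-subgroups of $\mathfrak{A}_{p^n}$.

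The key idea is to apply Corollary \ref{cInduction}: if every iterated centralizer in $\mathfrak{A}_{p^n}$ has vanishing positive-degree unramified cohomology, then every stable class is nontrivial on some abelian $p$-subgroup, which is exactly Theorem \ref{tDetectionAb}. So the plan reduces to the statement that for $\mathfrak{A}_{p^n}$, all iterated centralizers have trivial unramified cohomology. Here I would analyze the structure of centralizers in $\mathfrak{A}_N$: the centralizer of an element $g$ (with a given cycle type) in $\mathfrak{S}_N$ is a product of wreath products $\mathfrak{S}_k \wr \ZZ/\ell$ over the cycle lengths $\ell$ with multiplicity $k$, and the centralizer inside $\mathfrak{A}_N$ is an index-$\le 2$ subgroup of this. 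One then observes that such groups, and their iterated centralizers, are built by products and wreath-product constructions out of small symmetric/cyclic pieces; for the $p$-part (which is all that matters, since $H^*_{\mathrm{nr}}(\cdot, \ZZ/p)$ only sees the $p$-Sylow up to the transfer argument, and unramified cohomology is a stable-birational invariant vanishing on products by the Künneth principle and on the relevant rational quotients) one invokes the known vanishing of unramified cohomology for the relevant quotients $\CC(V)^{G}$ — e.g. for symmetric groups $\CC(V)^{\mathfrak{S}_k}$ is rational and for wreath products one uses a no-name/fibration argument to propagate rationality (or at least vanishing of $H^*_{\mathrm{nr}}$) from the fibers.

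The main obstacle I anticipate is precisely controlling the unramified cohomology of these iterated centralizers: one needs a clean inductive statement of the form ``for every subgroup $H$ arising as an iterated centralizer in $\mathfrak{A}_{p^n}$, $H^i_{\mathrm{nr}}(H, \ZZ/p) = 0$ for $i>0$,'' and the centralizer construction does not keep one literally inside the class of iterated wreath products $G_m$ — it produces products of groups of the form (index-2-in-)$\mathfrak{S}_k \wr C_\ell$, possibly nested. So the real work is a bookkeeping lemma showing this class of groups is closed under taking centralizers of elements and under passage to $p$-Sylow subgroups (after which Lemma \ref{lKuennethFactorAbelian}/\ref{lKuennethstable} handle products), combined with a rationality/stable-rationality input for the basic building blocks. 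I would structure it as: (i) describe centralizers in $\mathfrak{A}_N$ explicitly; (ii) show the associated fields of invariants are stably rational, hence have no unramified cohomology in positive degrees; (iii) conclude via Corollary \ref{cInduction}. Step (ii), and getting the induction to close, is where the technical heart lies; everything else is formal.
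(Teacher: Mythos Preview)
Your broad strategy—reduce to controlling centralizers and their unramified cohomology, then induct on cohomological degree via Lemma \ref{lResidue}/Corollary \ref{cInduction}—is exactly what the paper does. But several points in your execution are off and would block the argument as written.

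First, your description of centralizers has the wreath product reversed: the centralizer in $\mathfrak{S}_N$ of an element with $k$ cycles of length $\ell$ (on those $k\ell$ letters) is $(\ZZ/\ell)^k \rtimes \mathfrak{S}_k = \ZZ/\ell \wr \mathfrak{S}_k$, not $\mathfrak{S}_k \wr \ZZ/\ell$. This matters, because it is precisely the class of \emph{complete monomial groups} $\Sigma_m(A) = A \wr \mathfrak{S}_m$ with $A$ abelian that is closed under taking centralizers; this is Ore's structure theory (the items (1)--(5) preceding Lemma \ref{lCentralizers1}), and it is the technical input you are missing. With the wreath product in the wrong order your ``class of groups'' does not iterate—centralizers in $\mathfrak{S}_k \wr \ZZ/\ell$ are not again of that shape—so step (i) of your outline would not feed back into itself. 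The paper solves exactly the obstacle you anticipate by formulating the inductive statement not for $\mathfrak{A}_N$ and its iterated centralizers directly, but for the larger class of finite products $\prod_h A_h \wr \mathfrak{A}_{m_h}$ with each $A_h$ an abelian $p$-group. Lemma \ref{lCentralizers1} (via Ore) shows centralizers in $\Sigma_m(A)$ with $A$ abelian are again products of this type; Lemma \ref{lCentralizers2} then extracts, inside the centralizer in $A\wr\mathfrak{A}_m$ of a $p$-element, a subgroup of the same shape containing a $p$-Sylow. The $p$-Sylows of groups in this class are iterated wreath products of $\ZZ/p$, whose generically free linear quotients are stably rational by \cite{B-P}, Lemma~2.4, so their unramified cohomology vanishes. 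The degree induction then closes.

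Second, your opening paragraph about induction on $n$ and invoking Theorem \ref{tSteenrod} is a detour: neither plays any role in the paper's proof of this theorem (Steenrod's structure theorem for wreath product cohomology enters only later, in Proposition \ref{pDetectionAlternating} and the proof of Theorem \ref{tStableCohomologyAlternating}). The induction here is purely on cohomological degree, as in Corollary \ref{cInduction}, but run over the enlarged class of groups rather than over iterated centralizers in $\mathfrak{A}_N$ per se.
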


The proof of Theorem \ref{tDetectionAb} will follow from the structures of centralizers in \emph{complete monomial groups}. 

\begin{definition}\xlabel{dCompleteMonomial}
Let $H$ be a group. The \emph{complete monomial group} of degree $m$ on $H$ is the group $\Sigma_m (H):=H\wr \mathfrak{S}_m= (H)^m \rtimes \mathfrak{S}_m$, where $\mathfrak{S}_m$ is the symmetric group on $m$ letters. \\
A \emph{monomial cycle} in $\Sigma_m (H)$ is an element of the form $((h_1, \dots , \: h_m ) ; \: \sigma )$ where $\sigma \in \mathfrak{S}_m$ is a cycle. The \emph{determinant class} of a monomial cycle is the conjugacy class in $H$ of the product $h_1\cdot \ldots \cdot h_m$. The \emph{length} of a monomial cycle is the length of the underlying cycle $\sigma$.
\end{definition}

We have to recall some results from \cite{Ore} on the structure of conjugacy classes and centralizers in groups $\Sigma_m (H)$:

\begin{itemize}
\item[(1)]
Two monomial cycles in $\Sigma_m (H)$ are conjugate if and only if they have the same length and determinant class (\cite{Ore}, Theorem 6).
\item[(2)]
Any element $x\in \Sigma_m (H)$ can be written uniquely as a product of commuting monomial cycles (the underlying cyles in $\mathfrak{S}_m$ have no common variables) (\cite{Ore}, Theorem 3). 
\item[(3)]
From (1) and (2) follows the description of conjugacy classes in $\Sigma_m (H)$: two elements $x'$, $x$ in $\Sigma_m (H)$ are conjugate if the monomial cycles in their decompositions in (2) can be matched in such a way that corresponding cycles have the same length and determinant class.
\item[(4)]
Let $c=((h, \ldots , \: 1, \: 1) ; \: \sigma)$ be a monomial cycle of length $m$ in $\Sigma_m (H)$. Then its centralizer in $\Sigma_m (H)$ is an extension 
\[
1\to Z_H (h) \to Z_{\Sigma_m (H)} (c) \to \ZZ/m \to 0
\]
(\cite{Ore}, p.20, 21). In other words, the centralizer $Z_{\Sigma_m (H)} (c)$ is generated by $c$ and the group $Z_H(h)$ embedded diagonally into $H^m\subset \Sigma_m (H)$.
\item[(5)]
(\cite{Ore}, Theorem 8): Let
\[
x = x_1 \cdot \ldots \cdot x_l , \; x_i = y_{1}^{(i)} \cdot \ldots \cdot y_{k_i}^{(i)}
\]
be the decomposition of an element $x$ in $\Sigma_m (H)$ into disjoint monomial cycles $y_{j}^{(i)}$ as in (2), where we group the cycles of equal length and determinant class together: for fixed $i$, all $y_j^{(i)}$ have determinant class $h_i$ and length $n_i$. Then the centralizer of $x$ in $\Sigma_m (H)$ has a description as
\[
Z_{\Sigma_m (H)} (x) = \prod_{i=1}^{l} \Sigma_{k_i} (Z_{\Sigma_{n_i}(H)} (y_1^{(i)}))
\]
where as in (4) the centralizer of $y_1^{(i)}$ (or any $y_j^{(i)}$) in the group $\Sigma_{n_i}(H)$ is an extension
\[
1\to Z_{H} (h_i) \to Z_{\Sigma_{n_i}(H)} (y_1^{(i)}) \to \ZZ/n_i \ZZ \to 0\, .
\]
\end{itemize}

From the last fact (5) we get immediately

\begin{lemma}\xlabel{lCentralizers1}
Let $\Sigma_m (A)$ be a complete monomial group with $A$ abelian. Then the centralizer of any element $x$ in $\Sigma_m (A)$ is a product of groups of the same type
\[
Z_{\Sigma_m (A)} (x) = \prod_{h=1}^M \Sigma_{k_h} (A_h)
\]
where the $A_h$ are abelian.
\end{lemma}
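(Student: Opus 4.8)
The plan is to read the statement off directly from Ore's description of centralizers, recalled as fact (5) above. Given $x \in \Sigma_m(A)$, write $x = x_1 \cdots x_l$ as a product of disjoint monomial cycles, grouped so that for each $i$ the cycles $y_1^{(i)}, \dots, y_{k_i}^{(i)}$ constituting $x_i$ all have the same length $n_i$ and the same determinant class $h_i \in A$. Fact (5) then gives
\[
Z_{\Sigma_m(A)}(x) = \prod_{i=1}^{l} \Sigma_{k_i}\bigl( Z_{\Sigma_{n_i}(A)}(y_1^{(i)}) \bigr),
\]
so the only thing left is to identify each inner factor $B_i := Z_{\Sigma_{n_i}(A)}(y_1^{(i)})$ as an abelian group; then, relabelling and using that an abelian group $B$ is by definition the complete monomial group $\Sigma_1(B)$, the proof is complete.

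To identify $B_i$ I would invoke fact (4). Since $A$ is abelian we have $Z_A(h_i) = A$, so $B_i$ is generated by the monomial cycle $y_1^{(i)}$ together with the diagonally embedded copy $A \hookrightarrow A^{n_i} \subset \Sigma_{n_i}(A)$, and sits in an extension $1 \to A \to B_i \to \ZZ/n_i\ZZ \to 0$. A one-line computation in the wreath product shows that $y_1^{(i)} = ((h_i, 1, \dots, 1); \sigma)$ commutes with every diagonal element $((a, \dots, a); \mathrm{id})$: the two products differ only by the order of $h_i a$ versus $a h_i$ in a single slot, and these agree because $A$ is abelian (the permutation $\sigma$ acts trivially on a constant tuple). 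Hence $B_i$ is generated by two commuting abelian subgroups, so it is abelian; set $A_i := B_i$.

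Combining, $Z_{\Sigma_m(A)}(x) = \prod_{i=1}^{l} \Sigma_{k_i}(A_i)$ with each $A_i$ abelian, which is the assertion with $M = l$. There is no genuine obstacle in this argument; the only point that requires a moment's checking is the centrality of the diagonal $A$ in $B_i$ --- equivalently, that the extension furnished by fact (4) is abelian once the base group in that extension is itself abelian --- and this is precisely the short commutator computation indicated above.
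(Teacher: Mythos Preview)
Your proof is correct and follows essentially the same approach as the paper's: both apply Ore's structural fact (5) and then argue that each inner factor $Z_{\Sigma_{n_i}(A)}(y_1^{(i)})$ is abelian. The paper compresses this last step into the one-line remark that ``any central extension of a cyclic group by an abelian group is again abelian,'' whereas you spell out the underlying commutator computation showing the diagonal $A$ is central; the content is the same.
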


\begin{proof}
It suffices to remark that any central extension of a cyclic group by an abelian group is again abelian. 
\end{proof}

To prove Theorem \ref{tDetectionAb} it suffices, by the technique of Lemma \ref{lResidue} exposed above, to show the following.

\begin{lemma}\xlabel{lCentralizers2}
Let $p$ be an odd prime and let $x$ be an element of order a power of $p$ in a group $A\wr \mathfrak{A}_m=A^m \rtimes \mathfrak{A}_m$ where $A$ is an abelian $p$-group. Then there is a group
\[
Z' =  \prod_{h=1}^M A_h \wr \mathfrak{A}_{k_h}
\]
where all the $A_h$ are abelian $p$-groups, and with the property that $Z'$ is contained in the centralizer $Z_{A\wr \mathfrak{A}_m} (x)$ and contains a $p$-Sylow of $Z_{A\wr \mathfrak{A}_m} (x)$.
\end{lemma}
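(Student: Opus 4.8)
\emph{Proof plan.} The plan is to reduce everything to Ore's structural results (1)--(5) recalled above by comparing centralizers in $A\wr\mathfrak{A}_m$ with those in the larger complete monomial group $\Sigma_m(A)=A\wr\mathfrak{S}_m$ through the sign homomorphism. Since $A\wr\mathfrak{A}_m=A^m\rtimes\mathfrak{A}_m$ is exactly the kernel of the composite $\mathrm{sgn}\colon\Sigma_m(A)\twoheadrightarrow\mathfrak{S}_m\to\{\pm1\}$, and $x$ lies in it, we have $Z_{A\wr\mathfrak{A}_m}(x)=Z_{\Sigma_m(A)}(x)\cap(A\wr\mathfrak{A}_m)$. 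Hence it suffices to exhibit a subgroup $Z'$ of the asserted shape which lies inside $Z_{\Sigma_m(A)}(x)$, is contained in $\ker(\mathrm{sgn})$, and has $2$-power index in $Z_{A\wr\mathfrak{A}_m}(x)$; the last property then forces $Z'$ to contain a Sylow $p$-subgroup because $p$ is odd.

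First I would decompose $x$ into disjoint monomial cycles and group them as in Ore (5) into families $i=1,\dots,l$, the $i$-th consisting of $k_i$ monomial cycles of common length $n_i$ and common determinant class $h_i$. The decisive elementary point is that, because $x$ has $p$-power order and disjoint monomial cycles commute, each such cycle has $p$-power order; since its underlying permutation is an $n_i$-cycle and $n_i$ divides that order, every $n_i$ is a power of $p$ --- in particular \emph{odd} --- and likewise each $h_i$ has $p$-power order in $A$. By Ore (5) and Lemma \ref{lCentralizers1} (using $Z_A(h_i)=A$ since $A$ is abelian),
\[
Z_{\Sigma_m(A)}(x)=\prod_{i=1}^{l}\Sigma_{k_i}(A_i),\qquad A_i:=Z_{\Sigma_{n_i}(A)}(y_1^{(i)}),
\]
where each $A_i$ fits in an extension $1\to A\to A_i\to\ZZ/n_i\ZZ\to 0$, hence is an abelian $p$-group of order $|A|\,n_i$. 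I then set $M=l$ and $Z':=\prod_{i=1}^{M}A_i\wr\mathfrak{A}_{k_i}$, which is a subgroup of $\prod_i\Sigma_{k_i}(A_i)=Z_{\Sigma_m(A)}(x)$ and is visibly of the form demanded in the statement.

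The main step is to verify $Z'\subseteq\ker(\mathrm{sgn})$, i.e. that every element of each factor $A_i\wr\mathfrak{A}_{k_i}$ acts as an even permutation of the $k_in_i$ letters moved by $x_i$ (it fixes the remaining letters). By Ore (4), $A_i$ is generated by the monomial cycle $y_1^{(i)}$, whose underlying permutation is an $n_i$-cycle, together with $A$ embedded diagonally in the base $A^{n_i}$, which projects to the identity in $\mathfrak{S}_{n_i}$; since $n_i$ is odd, the image of $A_i$ in $\mathfrak{S}_{n_i}$ lies in $\mathfrak{A}_{n_i}$, so each element supported in a single block has sign $+1$. A block permutation $\tau\in\mathfrak{A}_{k_i}$, regarded as a permutation of the $k_in_i$ letters, has sign $\mathrm{sgn}(\tau)^{n_i}=+1$. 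Thus $\mathrm{sgn}$ kills $Z'$, whence $Z'\subseteq Z_{\Sigma_m(A)}(x)\cap(A\wr\mathfrak{A}_m)=Z_{A\wr\mathfrak{A}_m}(x)$. Finally $[Z_{\Sigma_m(A)}(x):Z']=\prod_{i}[A_i\wr\mathfrak{S}_{k_i}:A_i\wr\mathfrak{A}_{k_i}]=2^{\#\{i\,:\,k_i\ge 2\}}$, a power of $2$; since $Z'\le Z_{A\wr\mathfrak{A}_m}(x)\le Z_{\Sigma_m(A)}(x)$, the index $[Z_{A\wr\mathfrak{A}_m}(x):Z']$ divides it and is again a power of $2$, so for the odd prime $p$ a Sylow $p$-subgroup of $Z'$ is a Sylow $p$-subgroup of $Z_{A\wr\mathfrak{A}_m}(x)$. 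The only delicate point is the sign bookkeeping of the third step, and that is precisely where the hypothesis that $p$ be odd is used essentially (it makes the cycle lengths $n_i$ odd); everything else is formal manipulation with Ore's theorems.
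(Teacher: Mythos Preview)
Your proof is correct and follows essentially the same route as the paper: view $A\wr\mathfrak{A}_m$ as the kernel of the sign on $\Sigma_m(A)$, invoke Ore's description of $Z_{\Sigma_m(A)}(x)=\prod_h\Sigma_{k_h}(A_h)$ from Lemma~\ref{lCentralizers1}, set $Z'=\prod_h A_h\wr\mathfrak{A}_{k_h}$, and use that $p$ is odd to see $Z'$ lands in $\ker(\mathrm{sgn})$ with $2$-power index in the full centralizer. The paper compresses the sign verification into ``As $p$ is odd, it is clear that the intersection contains $Z'$'' and records the quotient as an explicit $(\ZZ/2)^r$; you instead spell out the key reason (the monomial cycle lengths $n_i$ are $p$-powers, hence odd, so the generating $n_i$-cycle of each $A_i$ is an even permutation) and bound the index directly, which is a welcome clarification of exactly where the oddness of $p$ enters.
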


\begin{proof}
We consider the group $A\wr \mathfrak{A}_m$ as a subgroup of the complete monomial group $\Sigma_m (A)$. By Lemma \ref{lCentralizers1} itsuffices to determine the intersection of $Z_{\Sigma_m (A)} (x) = \prod_{h=1}^M \Sigma_{k_h} (A_h)$ and $A\wr \mathfrak{A}_m$. As $p$ is odd, it is clear that the intersection contains
\[
Z' = \prod_{h=1}^M A_h \wr \mathfrak{A}_{k_h}
\]
and that the complete centralizer $Z_{A\wr \mathfrak{A}_m} (x)$ is an extension 
\[
1 \to Z' \to Z_{A\wr \mathfrak{A}_m} (x) \to (\ZZ/2)^r \to 0
\] 
where $(\ZZ/2)^r$ is an elementary abelian $2$-group which can be identified with the kernel of the sign
\[
\prod_{h=1}^M \mathfrak{S}_{k_h} \subset \mathfrak{S}_{\sum k_h} \to \{\pm 1\}
\]
modulo the subgroup $\prod_{h=1}^M \mathfrak{A}_{k_h}$. The statement follows as $p$ is odd.
\end{proof}

Thus we obtain

\begin{proof}(of Theorem \ref{tDetectionAb})
We will prove more generally that $H^*_{\mathrm{s}} (G, \: \ZZ /p)$ is detected by abelian $p$-subgroups where $G$ is any group which is a product of groups $A\wr \mathfrak{A}_m$ with $A$ an abelian $p$-group.
We have that $H^*_{\mathrm{s}} (G, \: \ZZ /p)$ is detected by $H^*_{\mathrm{s}} ( \mathrm{Syl}_p (G), \: \ZZ /p )$, and the higher unramified cohomology of $\mathrm{Syl}_p (G )$ is trivial. This follows immediately from \cite{B-P}, Lemma 2.4, namely, if one forms a wreath product of groups, each of which has stably rational generically free linear quotients, then the wreath product inherits this property.\\
Hence every element $a$ in  $H^n_{\mathrm{s}} (G, \: \ZZ /p)$ will, in the notation of Lemma \ref{lResidue}, give a nontrivial $d_{\nu } (a) \in H^{n-1}_{\mathrm{s}} (Z_{G} (h), \: \ZZ /p )$ for some element $h\in G$ of $p$-power order. Thus it will be enough to show that $H^{n-1}_{\mathrm{s}} (Z_{G} (h), \: \ZZ /p )$ is detected by abelian $p$-subgroups. But $H^{n-1}_{\mathrm{s}} (Z_{G} (h), \: \ZZ /p )$ is detected by $H^{n-1}_{\mathrm{s}} (\mathrm{Syl}_p (Z_{G} (h)), \: \ZZ /p )$ and, by Lemma \ref{lCentralizers2}, $\mathrm{Syl}_p (Z_{G} (h))$ is contained in a group which in turn is contained in $Z_G (h)$ and is again a product of groups of type $A\wr \mathfrak{A}_m$. Hence we can conclude by induction on the cohomological degree $n$.
\end{proof}

Now we prove Theorem \ref{tDetectionElAb}. It will follow immediately from 

\begin{proposition}\xlabel{pReductionToSmaller}
Let $N=p^n$, and suppose that $A$ is an abelian $p$-subgroup of $\mathfrak{A}_N$. Thus one can write
\[
A = \prod_{i=1}^k (\ZZ / (p^{l_i}))^{r_i}, \: l_i, \: r_i \in \mathbb{N}\, .
\]
If $A$ is not reduced to a single cyclic group $\ZZ/(p^l)$, then $A$ is contained in a product of alternating groups $\prod_{j=1}^h \mathfrak{A}_{t_h} \subset \mathfrak{A}_N$ with $t_h < N$ for all $h$.
\end{proposition}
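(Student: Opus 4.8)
The plan is to read the statement off the orbit decomposition of the permutation action of $A$ on the $N=p^n$ letters, using one parity remark. Since $p$ is odd, any element $g$ of $p$-power order in $\mathfrak{S}_N$ restricts, on every $g$-invariant subset $S\subseteq\{1,\dots,N\}$, to an \emph{even} permutation of $S$: on $S$ the element $g$ is a product of disjoint cycles of $p$-power (hence odd) length, and an odd-length cycle is even. Let $\{1,\dots,N\}=O_1\sqcup\dots\sqcup O_h$ be the partition into $A$-orbits and put $t_j=|O_j|$. Each $a\in A$ fixes every $O_j$ setwise and, being of $p$-power order, restricts to an even permutation of it; hence $A$ embeds into $\prod_{j=1}^h\mathfrak{A}(O_j)\cong\prod_{j=1}^h\mathfrak{A}_{t_j}$, and this product sits inside $\mathfrak{A}_N$ as the Young subgroup attached to the orbit partition, since a permutation that is even on each block of a partition it preserves is even overall.

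If $A$ does not act transitively, then $h\ge 2$, so $t_j<N$ for all $j$, and the embedding just constructed is precisely the assertion. For this one only uses that $A$ is an abelian $p$-group and that $p$ is odd; the hypothesis that $A$ be noncyclic has not yet been needed.

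The remaining case is that $A$ acts transitively. A transitive abelian permutation group is regular, so here $|A|=N=p^n$ and $A$ is realized by its own regular representation; in particular it is \emph{not} contained in any Young subgroup all of whose blocks are smaller than $N$. This is where the hypothesis on $A$ has to enter, and where I expect the genuine work to lie: one must show that, among transitive abelian $p$-subgroups of $\mathfrak{A}_{p^n}$, the only one that needs to be kept is the cyclic group $\ZZ/(p^n)$ acting as a single $p^n$-cycle — equivalently, that the noncyclic transitive ones are either excluded or harmless for the detection statement the proposition feeds into. Unlike the purely formal orbit argument of the first two paragraphs, this step turns on the actual subgroup structure of $\mathfrak{A}_{p^n}$ and on the way the proposition is used, and it is the part I would single out as the main obstacle.
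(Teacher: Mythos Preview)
Your orbit argument for the intransitive case is correct and is exactly how the paper begins. The gap is in the transitive case, and it is a real gap in your argument, not in the proposition: you have implicitly assumed that a ``product of alternating groups $\prod_j \mathfrak{A}_{t_j}\subset \mathfrak{A}_N$'' must be a Young subgroup, i.e.\ must come from a partition of the $N$ letters into blocks of sizes $t_j$. Under that reading you are right that a transitive $A$ cannot lie in such a subgroup with all $t_j<N$. But the proposition does not say this, and the paper uses a different embedding.

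Here is the missing idea. In the transitive case $A$ acts regularly, so identify the $N$ letters with the elements of $A$. Since $A$ is noncyclic, write $A=A'\times \ZZ/(p^k)$ with $A'$ nontrivial, so $N=|A'|\cdot p^k$ with both factors strictly less than $N$. Partition the letters into the $p^k$ cosets of $A'$; then $A'$ acts by the \emph{same} permutation inside each coset, while $\ZZ/(p^k)$ permutes the cosets cyclically. Now embed $\mathfrak{A}_{|A'|}\times\mathfrak{A}_{p^k}$ into $\mathfrak{A}_N$ by letting the first factor act diagonally (identically on each of the $p^k$ blocks of size $|A'|$) and the second factor act by permuting the blocks. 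These two subgroups of $\mathfrak{A}_N$ commute and intersect trivially, so they generate a genuine direct product $\mathfrak{A}_{|A'|}\times\mathfrak{A}_{p^k}\subset\mathfrak{A}_N$, and by construction $A$ sits inside it. Both indices $|A'|$ and $p^k$ are strictly less than $N$, which is exactly the claim. So the proposition is literally true as stated; you do not need to fall back on ``harmless for the application.''
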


Once we have this Proposition, the proof of Theorem \ref{tDetectionElAb} is an induction: it suffices to prove it for $N=p^n$, and we may assume that detection by elementary abelian subgroups holds for the stable cohomology of all $\mathfrak{A}_j$ with $j<N$. Now clearly, $H^1_{\mathrm{s}} (\mathfrak{A}_N, \: \ZZ /p)$ is detected by elementary abelian $p$-subgroups, for $H^1 (\mathfrak{A}_N, \: \ZZ /p )$ can be identified with characters $\chi\, :\, \mathfrak{A}_N \to \ZZ/ p$ whence $ H^1_{\mathrm{s}} (\mathfrak{A}_N, \: \ZZ /p) =0$ unless $p=3$ and $N=3$ so that $\mathfrak{A}_N = \ZZ/3$. So we have to show that any stable class $a\in H^i_{\mathrm{s}} ( \mathfrak{A}_N, \: \ZZ/ p )$ for $i\ge 2$ is nontrivial on an elementary abelian $p$-subgroup. By Theorem \ref{tDetectionAb} $a$ is nontrivial on an abelian $p$-subgroup $A$ with $\mathrm{rk} (A/pA) \ge 2$ (as the stable cohomology of $A$ is an exterior algebra on $\mathrm{rk} (A/pA)$ generators). Such an $A$ is contained in a product of smaller alternating groups by Proposition \ref{pReductionToSmaller}. Thus the proof is complete by induction.

\begin{proof} (of Proposition \ref{pReductionToSmaller})

Denote by $X_N = \{ 1, \dots ,\: N \}$ the set of letters on which the ambient $\mathfrak{S}_N \supset A$ acts. Let
\[
X_N = \coprod_{\alpha} X_{N, \: \alpha}
\]
be the decomposition of $X_N$ into $A$-orbits. Let $X_{N, \alpha_0 } =: X$ be a fixed orbit. This orbit is isomorphic to a quotient $\bar{A}$ of $A$, hence a group of the same form 
\[
\bar{A} = \prod_{i=1}^{\bar{k}} (\ZZ / (p^{\bar{l}_i}))^{\bar{r}_i}
\]
and the action of $A$ on this orbit is via \emph{the regular representation} of $\bar{A}$ on itself. In other words, $A$ embeds into a subgroup $\prod_{\alpha } \bar{A}_{\alpha }$ of $\mathfrak{A}_N$ where each $\bar{A}_{\alpha }$ is embedded into a subgroup $\mathfrak{A}_{\mathrm{ord} (\bar{A}_{\alpha })}$ via the regular representation.\\
In summary, it suffices to prove the statement of Proposition \ref{pReductionToSmaller} for the case that the group $A$ in its statement is embedded into the ambient $\mathfrak{A}_N$ via the regular representation. We can write $A = A' \times \ZZ /(p^k)$ with $\mathrm{rk} (A' /pA')  < \mathrm{rk} (A/pA)$. Moreover, by definition of the regular representation, the composition of arrows 
\[
A = A' \times \ZZ / (p^k) \hookrightarrow \mathfrak{A}_{| A' | } \times \ZZ / (p^k) \hookrightarrow  \mathfrak{A}_{| A' | } \wr \ZZ / (p^k) \hookrightarrow \mathfrak{A}_{|A'| \cdot p^k}\simeq \mathfrak{A}_{|A|}
\]
gives the regular representation of $A$ where the first arrow $\hookrightarrow$ from the left is induced by the regular representation of $A'$, the second such arrow embeds $ \mathfrak{A}_{| A' | } \times \ZZ / (p^k)$ into the wreath product $\mathfrak{A}_{| A' | } \wr \ZZ / (p^k)$ by sending $(a; \sigma )$ to $(a, a, \dots , a; \sigma)$ as usual, and the last arrow embeds the wreath product $\mathfrak{A}_{| A' | } \wr \ZZ / (p^k)$  into $\mathfrak{A}_{|A'| \cdot p^k}$ by partitioning the set of $|A'| \cdot p^k$ objects which $\mathfrak{A}_{|A'| \cdot p^k}$ permutes into $p^k$ disjoint groups of $|A'|$ objects, and letting $\ZZ/(p^k)$ act by cyclically rotating these groups, and letting $(\mathfrak{A}_{|A'|})^{p^k}$ act via permutations within these groups. It follows that
\[
A \subset \mathfrak{A}_{| A' |} \times \mathfrak{A}_{p^k}
\]
where now $\mathfrak{A}_{p^k}$ is embedded into $\mathfrak{A}_{|A|}$ as arbitrary alternating (not only cyclic) permutations of the $p^k$ groups of items. Note that elements of the two subgroups $\mathfrak{A}_{p^k}$ and $\mathfrak{A}_{| A' |}$ of the group $ \mathfrak{A}_{|A|}$ commute, and the two subgroups intersect trivially, so that we do have a direct product. Moreover, if $A$ is not reduced to a single cyclic group, we have that $A'$ is not the trivial group, and $p^k < | A|$.
\end{proof}

\section{Stable cohomology of alternating groups}\xlabel{sStableAlternating}

Let $\mathfrak{A}_n$ be, as in the previous section, the alternating group on $n$ letters, and let $p$ be an odd prime (the case $p=2$ has been treated in \cite{B-P}).  We assume first $n=p^m$ for simplicity.

\

We have to know the way elementary abelian $p$-subgroups sit inside $\mathfrak{A}_n$ for the following. We summarize everything in the following Lemma which is proven by arguments analogous to those already used in the proof of Proposition \ref{pReductionToSmaller}. 

\begin{lemma}\xlabel{lConjugacySymmetric}
Suppose $n=p^m$ and denote by $I_m:= \{ \underline{i}=(i_1, \dots , i_m)\in  \mathbb{N}^m \}$ the set of all nonnegative integer sequences $\underline{i}$ with
\[
p^m = i_1 p + i_2 p^2 + \dots + i_m p^m = \sum_{j=1}^m i_j p^j \, .
\] 
Then there is a natural bijection between $I_m$ and the set of conjugacy classes of maximal elementary abelian $p$-subgroups in $\mathfrak{S}_{p^m}$. The subgroup $T (i_1, \dots , i_m)$ corresponding to $\underline{i}$ can be described as follows: partition the set of integers $X = \{ 1, \dots , n\}$ into segments of $p$ power lengths according to $\underline{i}$:
\[
X = \bigcup_{j=1}^m \bigcup_{s=1}^{i_j} X^j_s
\]
where $X^j_s$ is a set with $p^j$ elements, 
\[
X^j_s = \{ i_1p + \dots + i_{j-1}p^{j-1} + (s-1)p^j, \dots , \: i_1p + \dots + i_{j-1}p^{j-1} + s p^j  \}
\]
for definiteness. The subset $X^j_s$ corresponds to a subgroup $\mathfrak{S}_{p^j} = \left( \mathfrak{S}_{p^j} \right)^{X^j_s}\subset \mathfrak{S}_{p^m}$  fixing all elements in $X$ outside $X^j_s$. Inside $\left( \mathfrak{S}_{p^j} \right)^{X^j_s}$ there is a copy of $(\ZZ/p \ZZ )^j$, which we denote by $\left(  (\ZZ/p\ZZ )^j \right)^{X^j_s}$, embedded via the regular representation, i.e. we identify the elements in $X^j_s$ with the elements of $\left( (\ZZ/p\ZZ)^j \right)^{X^j_s}$ and the permutation action is then given by left multiplication. 

\

We denote $T(0, \dots, p^{m-k}, \dots , 0)$ (a single nonzero entry $p^{m-k}$ in the $k$-th place) by $T_{k,m}$.

\

Hence every maximal elementary abelian $p$-subgroup in $\mathfrak{A}_n$ is conjugate -in $\mathfrak{S}_n$ or $\mathfrak{A}_n$, it is the same thing- to one contained in $\mathfrak{A}_{p^{n-1}} \times \dots \times \mathfrak{A}_{p^{n-1}}$ ($p$ factors) or conjugate to $T_{m,m}$.
\end{lemma}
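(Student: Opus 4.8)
The plan is to analyse an arbitrary elementary abelian $p$-subgroup $E\subset\mathfrak{S}_{p^m}$ via its orbit decomposition, in the spirit of the proof of Proposition~\ref{pReductionToSmaller}. Write $X=\{1,\dots,p^m\}=\coprod_\alpha X_\alpha$ into $E$-orbits. On a given orbit $X_\alpha$ the abelian group $E$ acts transitively, so all point stabilizers coincide with a single subgroup $K_\alpha\subset E$; hence $E$ acts on $X_\alpha$ through $\bar E_\alpha:=E/K_\alpha$, freely and transitively, so $X_\alpha\cong\bar E_\alpha$ with $E$ acting by the regular representation, and $|X_\alpha|$ is a power of $p$. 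The map $E\hookrightarrow\prod_\alpha\bar E_\alpha=:\widehat E$ is injective ($\bigcap_\alpha K_\alpha=1$ by faithfulness), and $\widehat E$ embeds into $\mathfrak{S}_{p^m}$ by letting the $\alpha$-th factor act on $X_\alpha$ through its regular representation and trivially elsewhere; this realises $\widehat E$ as an elementary abelian $p$-subgroup containing $E$. So if $E$ is maximal, $E=\widehat E$, i.e.\ $E$ is a product of copies of $(\ZZ/p)^{j}$ in the regular representation, one per orbit of size $p^j$. One also sees $E$ has no fixed point: if $i_0$ is the number of $1$-point orbits then $p^m=i_0+\sum_{j\ge1}i_jp^j$ forces $p\mid i_0$, and if $i_0\ge p$ one enlarges $E$ by a $p$-cycle on $p$ of the fixed points, contradicting maximality. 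Hence the multiset of orbit sizes is a partition of $p^m$ into powers $p^j$, $1\le j\le m$, i.e.\ is recorded by some $\underline i\in I_m$, and $E$ is $\mathfrak{S}_{p^m}$-conjugate to $T(\underline i)$ with the explicit shape described in the statement.

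Next I would check that $\underline i$ is a complete conjugacy invariant. Distinct $\underline i$ give non-conjugate subgroups since conjugation permutes orbits and so preserves their size multiset. For fixed $\underline i$, $T(\underline i)$ is well defined up to conjugacy: changing the identification of a block $X^j_s$ with $(\ZZ/p)^j$ amounts to conjugating inside $\mathrm{Sym}(X^j_s)$, where $N_{\mathrm{Sym}(X^j_s)}\big(((\ZZ/p)^j)^{X^j_s}\big)=\mathrm{Hol}((\ZZ/p)^j)$ surjects onto $\mathrm{Aut}((\ZZ/p)^j)$; and reordering blocks of equal size is realised by a permutation of $X$. This also gives the naturality of the bijection and the explicit description of $T(\underline i)$.

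For the last assertion, first reduce from $\mathfrak{A}_n$ to $\mathfrak{S}_n$: as $p$ is odd, every $p$-element of $\mathfrak{S}_n$ is a product of $p$-cycles, hence even, so $p$-subgroups of $\mathfrak{A}_n$ and $\mathfrak{S}_n$ coincide and ``maximal elementary abelian $p$-subgroup'' means the same in both. Moreover $\mathfrak{S}_n$- and $\mathfrak{A}_n$-conjugacy agree for these, because $N_{\mathfrak{S}_n}(T(\underline i))$ contains an odd permutation: the image in $\mathrm{Sym}(X^j_s)\subset\mathfrak{S}_n$ of $\mathrm{diag}(\zeta,1,\dots,1)\in GL_j(\mathbb F_p)=\mathrm{Aut}((\ZZ/p)^j)$, with $\zeta$ a generator of $\mathbb F_p^\times$, normalises $T(\underline i)$ and is a product of $p^{j-1}$ cycles of length $p-1$, of sign $(-1)^{(p-2)p^{j-1}}=-1$. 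Thus a maximal elementary abelian $p$-subgroup of $\mathfrak{A}_n$ is $\mathfrak{A}_n$-conjugate to some $T(\underline i)$. If $\underline i=(0,\dots,0,1)$ this is $T_{m,m}$. Otherwise $i_m=0$ (else $\sum i_jp^j>p^m$), so all orbits of $T(\underline i)$ have size $\le p^{m-1}$; then an elementary base-$p$ packing argument — a multiset of powers $p^j$, $1\le j\le m-1$, with sum $p^m$ can be split into $p$ blocks of sum $p^{m-1}$, obtained by greedily filling one block with the parts in decreasing order, where a partial sum below $p^{m-1}$ can always absorb the next part because both $p^{m-1}$ and every partial sum are multiples of that next part — groups the orbits into $p$ mutually disjoint, $T(\underline i)$-invariant blocks of $p^{m-1}$ points. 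Then $T(\underline i)\subset\mathfrak{S}_{p^{m-1}}^{\times p}$, and being a $p$-group it lies in $\mathfrak{A}_{p^{m-1}}^{\times p}$.

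The only essentially new ingredient over Proposition~\ref{pReductionToSmaller} is the maximality step $E=\widehat E$, which becomes immediate once one spots the larger group $\widehat E\subset\mathfrak{S}_{p^m}$. I expect the main obstacle to lie in the two bookkeeping points: (a) the $\mathfrak{S}_n$ versus $\mathfrak{A}_n$ comparison, which is exactly where oddness of $p$ is genuinely used (through the existence of an odd element normalising $T(\underline i)$, and the evenness of all $p$-elements), and (b) the base-$p$ packing lemma, which is routine but must be phrased so that the $p$ resulting blocks are unions of $E$-orbits — hence $E$-invariant as sets, not merely as an unordered partition.
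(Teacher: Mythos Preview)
Your argument is correct and follows the same orbit-decomposition approach the paper sketches: decompose $X$ into $E$-orbits, observe that on each orbit $E$ acts through a quotient via the regular representation, and conclude that a maximal $E$ must equal the full product $\widehat E$. The paper gives only this outline and then cites \cite{A-M} and \cite{Mui} for the classification.

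You are more explicit than the paper in two places. For the $\mathfrak{S}_n$ versus $\mathfrak{A}_n$ comparison, the paper appeals forward to Lemma~\ref{lNormalizerTmm} (the full Weyl-group computation $W_{\mathfrak{S}_n}(T_{m,m})=\mathrm{GL}_m(\mathbb{F}_p)$ versus $W_{\mathfrak{A}_n}(T_{m,m})=\mathrm{GL}_m^+(\mathbb{F}_p)$) to deduce that the normalizer contains odd elements; your direct exhibition of a single odd normalizing element via $\mathrm{diag}(\zeta,1,\dots,1)\in\mathrm{GL}_j(\mathbb{F}_p)$ is lighter and works uniformly for every $T(\underline i)$, not only $T_{m,m}$. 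For the final clause --- containment in $\mathfrak{A}_{p^{m-1}}^{\times p}$ when $\underline i\neq(0,\dots,0,1)$ --- the paper offers no argument beyond the word ``Hence''; your greedy base-$p$ packing (partial sums and the target $p^{m-1}$ are both multiples of the next part, so one never overshoots, and the total $p\cdot p^{m-1}$ forces exactly $p$ full blocks) is the natural justification and is correct as written.
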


The proof is immediate if one notices that under the action of some elementary abelian $p$-subgroup $A$ the set $X$ breaks up into $A$ orbits of cardinality a $p$ power, and the action of $A$ restricted to an orbit embeds $A$ into the permutation group of the elements of the orbit in such a way that the image is conjugate to the image of the regular representation. The result is in \cite{A-M} VI. 1, Thm. 1.3, but also \cite{Mui}, Chapter II, \S 2, where it is ascribed to Dixon. For the statement that the conjugacy classes of maximal elementary abelian $p$-subgroups in $\mathfrak{A}_n$ are the same as in $\mathfrak{S}_n$ one can appeal to the following Lemma which we will also use in other instances below (it is e.g. in \cite{Mann85}, p. 269).

\begin{lemma}\xlabel{lNormalizerTmm}
For $n=p^m$ the Weyl groups $W_{\mathfrak{S}_n} (T_{m,m}) = N_{\mathfrak{S}_n}(T_{m, m})/T_{m, m}$ resp. $W_{\mathfrak{A}_n} (T_{m,m})$ of $T_{m,m} \simeq (\ZZ/p\ZZ)^m$ inside $\mathfrak{S}_n$ resp. $\mathfrak{A}_n$ are 
\begin{gather*}
W_{\mathfrak{S}_n} (T_{m,m}) = \mathrm{GL}_m (\mathbb{F}_p), \; W_{\mathfrak{A}_n} (T_{m,m}) = \mathrm{GL}^+_m (\mathbb{F}_p)
\end{gather*}
where $\mathrm{GL}^+_m (\mathbb{F}_p)$ is the kernel of the map $\mathrm{GL}_m (\mathbb{F}_p ) \to \ZZ/2\ZZ$ given by the determinant raised to the power $(p-1)/2$.
\end{lemma}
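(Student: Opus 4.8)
The plan is to identify $T_{m,m}$ with the additive group $V = (\mathbb{F}_p)^m$, acting on itself by translations, so that the ambient $\mathfrak{S}_n = \mathfrak{S}_{p^m}$ is the full symmetric group on the set $V$, and the Weyl group computation becomes a question about which permutations of $V$ normalize the translation subgroup. First I would recall the classical fact (the holomorph of an abelian group) that $N_{\mathrm{Sym}(V)}(V) = V \rtimes \mathrm{Aut}(V)$: a permutation $\sigma$ of $V$ normalizing the translation group $V$ must, after composing with the translation sending $\sigma(0)$ back to $0$, fix $0$ and conjugate translations to translations, hence be a group automorphism of $V$; conversely every element of $V \rtimes \mathrm{Aut}(V)$ visibly normalizes $V$. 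Since $\mathrm{Aut}(V) = \mathrm{GL}_m(\mathbb{F}_p)$ and the translations are exactly $T_{m,m}$ itself, this gives $W_{\mathfrak{S}_n}(T_{m,m}) = N_{\mathfrak{S}_n}(T_{m,m})/T_{m,m} \simeq \mathrm{GL}_m(\mathbb{F}_p)$, establishing the first claim.

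Next I would pin down the second claim by computing, for $g \in \mathrm{GL}_m(\mathbb{F}_p)$ acting on the set $V$, the sign of $g$ as a permutation of the $p^m$ elements of $V$. The cleanest route is to use multiplicativity of the sign character: the map $g \mapsto \mathrm{sgn}(g \text{ as a permutation of } V)$ is a homomorphism $\mathrm{GL}_m(\mathbb{F}_p) \to \{\pm 1\}$, so it factors through the abelianization, i.e. through $\det \colon \mathrm{GL}_m(\mathbb{F}_p) \to \mathbb{F}_p^\times$ (using that $\mathrm{SL}_m(\mathbb{F}_p)$ is perfect for $p$ odd, $m \ge 1$, apart from the trivial small cases where the statement is checked directly). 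Hence the sign of $g$ on $V$ depends only on $\det(g)$, and it suffices to evaluate it on a single generator of $\mathrm{GL}_1$ sitting inside $\mathrm{GL}_m$, namely the diagonal matrix $\mathrm{diag}(\lambda, 1, \dots, 1)$ with $\lambda$ a generator of $\mathbb{F}_p^\times$. Such an element acts on $V = \mathbb{F}_p \times (\mathbb{F}_p)^{m-1}$ as multiplication by $\lambda$ on the first coordinate, trivially on the rest; on each of the $p^{m-1}$ copies of $\mathbb{F}_p$ it is a permutation fixing $0$ and permuting the $p-1$ nonzero elements as a single $(p-1)$-cycle (since $\lambda$ generates $\mathbb{F}_p^\times$), which has sign $(-1)^{p-2} = -1$ as $p$ is odd; doing this on $p^{m-1}$ disjoint blocks gives total sign $(-1)^{p^{m-1}}$. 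Since $p$ is odd, $p^{m-1}$ is odd, so the sign is $-1$. Because $\lambda$ is a generator, a general $g$ with $\det(g) = \lambda^k$ has sign $(-1)^k$; writing this in terms of the determinant, the sign of $g$ on $V$ equals $\left(\det(g)\right)^{(p-1)/2}$ viewed in $\{\pm 1\}$ via the Legendre-symbol identification $\mathbb{F}_p^\times \to \{\pm 1\}$, since $\lambda^{(p-1)/2} = -1$. Therefore $W_{\mathfrak{A}_n}(T_{m,m}) = N_{\mathfrak{A}_n}(T_{m,m})/T_{m,m}$ is exactly the kernel of $\det^{(p-1)/2}$, i.e. $\mathrm{GL}^+_m(\mathbb{F}_p)$; here one uses $T_{m,m} \subset \mathfrak{A}_n$, which holds because translations by nonzero vectors are products of $p^{m-1}$ disjoint $p$-cycles, hence even since $p$ is odd.

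I expect the main obstacle to be purely bookkeeping rather than conceptual: one must be careful that $T_{m,m}$ really is the subgroup of translations (not merely conjugate to it) under the identification fixed in Lemma \ref{lConjugacySymmetric}, where the regular representation identifies the $p^m$ letters with the elements of $(\mathbb{F}_p)^m$ and the group acts by left multiplication — this is exactly the translation action, so the holomorph description applies verbatim. The one genuine input that should be cited rather than reproved is the perfectness of $\mathrm{SL}_m(\mathbb{F}_p)$ for $p$ odd (equivalently, that the abelianization of $\mathrm{GL}_m(\mathbb{F}_p)$ is detected by the determinant), which reduces the sign computation to the single diagonal generator; for $m = 1$ the statement is immediate from the $(p-1)$-cycle computation above. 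A cleaner alternative avoiding the perfectness input is to compute $\mathrm{sgn}(g)$ directly for every $g$ via the orbit structure of $g$ on $V \setminus \{0\}$ and the formula $\mathrm{sgn}(g) = (-1)^{\#V - (\text{number of } g\text{-orbits on } V)}$, but the homomorphism argument is shorter to write down and the result is the same.
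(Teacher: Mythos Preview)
Your proof is correct and follows the same approach as the paper, which also invokes the holomorph fact that the Weyl group of $H$ in $\mathfrak{S}_{|H|}$ via the regular representation is $\mathrm{Out}(H) = \mathrm{Aut}(H)$ for $H$ abelian. The paper's treatment is a one-line remark asserting that ``both statements follow'' from this, so your explicit sign computation for the $\mathfrak{A}_n$ case (factoring through $\det$ and evaluating on $\mathrm{diag}(\lambda,1,\dots,1)$) actually supplies the detail the paper omits; note that you can avoid citing perfectness of $\mathrm{SL}_m(\mathbb{F}_p)$ altogether by observing that $\mathrm{SL}_m(\mathbb{F}_p)$ is generated by transvections of odd order $p$, which automatically have sign $+1$.
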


In fact it is true that the Weyl group of any group $H$ in the embedding $H \hookrightarrow \mathfrak{S}_{|H|}$ given by the regular representation is the group of outer automorphisms of $H$, which become all inner in $\mathfrak{S}_{|H|}$. Both statements of the Lemma follow from this remark as $\mathrm{Aut} ((\ZZ/p\ZZ)^m) = \mathrm{GL}(m, \: \mathbb{F}_p)$. Likewise, Lemma \ref{lNormalizerTmm} implies that in the normalizer of any maximal elementary abelian $p$-subgroup in $\mathfrak{S}_n$ there are elements which do not lie in $\mathfrak{A}_n$. Hence conjugacy classes of these in the two groups coincide.

\

We will also use in an essential way the C\'{a}rdenas-Kuhn Theorem to calculate the stable cohomology of $\mathfrak{A}_n$, so we recall the precise statement (see \cite{A-M} III.5 for the proof).

\begin{theorem}\xlabel{tCardenasKuhn}
Let $E \subsetneqq S \subsetneqq G$ be a \emph{closed} system of finite groups, where the closedness means that every subgroup of $S$ which is conjugate to $E$ in $G$ is already conjugate to $E$ in $S$. Let $W_G (E)= N_G(E)/E$ resp. $W_S(E) = N_S (E)/E$ be the Weyl groups of $E$ in $G$ resp. $S$, and suppose that $E$ is $p$-elementary and that $W_S (E)$ contains a $p$-Sylow of $W_G(E)$. \\
Then the image of the restriction map  
\[
\mathrm{res}^G_E \, :\, H^* (G, \: \ZZ /p\ZZ ) \to H^* (E, \: \ZZ /p\ZZ )
\]
is equal to
\[
\mathrm{im} \left( \mathrm{res}^S_E \, :\,  H^* (S, \: \ZZ /p\ZZ ) \to H^* (E, \: \ZZ /p\ZZ ) \right) \cap H^* (E, \: \ZZ/p\ZZ )^{W_G (E)} \, .
\]
\end{theorem}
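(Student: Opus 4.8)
The plan is to reduce to the situation of a $p$-group and then exploit the double coset formula together with the stable elements description of $H^*(G,\ZZ/p)$. First I would recall that, since cohomology with $\ZZ/p$-coefficients is detected on a $p$-Sylow subgroup, we may assume $S$ is a $p$-group: indeed, if $S_0$ is a $p$-Sylow of $S$ then by the closedness hypothesis every subgroup of $S_0$ conjugate to $E$ in $G$ is conjugate to $E$ in $S$, and after conjugating inside $S$ we may arrange it to lie in $S_0$; moreover $W_{S_0}(E)$ still contains a $p$-Sylow of $W_S(E)$, hence of $W_G(E)$. The restriction $\mathrm{res}^S_E$ factors through $\mathrm{res}^{S_0}_E$ up to the transfer, and $\mathrm{im}(\mathrm{res}^S_E)$ and $\mathrm{im}(\mathrm{res}^{S_0}_E)$ have the same intersection with $H^*(E)^{W_G(E)}$ because the extra Weyl group elements of $S$ over $S_0$ act trivially after intersecting with the $W_G(E)$-invariants. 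So from now on $S$ is a $p$-group containing $E$ as a normal subgroup with $W_S(E) = S/E$ a $p$-group.

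Next I would establish the inclusion ``$\subseteq$'' of the two sides, which is the easy direction. By Cartan–Eilenberg, $\mathrm{im}(\mathrm{res}^G_E)$ consists of \emph{stable elements}, i.e. classes $x \in H^*(E)$ such that for every $g \in G$ one has $\mathrm{res}^E_{E \cap {}^gE}(x) = \mathrm{res}^{{}^gE}_{E\cap{}^gE}(c_g^* x)$. Taking $g \in N_G(E)$ shows $\mathrm{im}(\mathrm{res}^G_E) \subseteq H^*(E)^{W_G(E)}$, and the containment $\mathrm{im}(\mathrm{res}^G_E) \subseteq \mathrm{im}(\mathrm{res}^S_E)$ would follow by a transfer/stable-element argument once we know a $p$-Sylow of $G$ can be chosen containing $S$; more precisely, restricting a stable class for $G$ down to any $p$-Sylow $P \supseteq S$ and using that $E \subseteq S \subseteq P$ gives $\mathrm{im}(\mathrm{res}^G_E) \subseteq \mathrm{im}(\mathrm{res}^P_E)$, and then one uses the closedness of the system together with the double coset formula for $\mathrm{res}^P_E \circ \mathrm{tr}$ to push this down to $S$. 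I expect this direction to be routine.

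The heart of the matter is the reverse inclusion ``$\supseteq$'': given $x \in \mathrm{im}(\mathrm{res}^S_E) \cap H^*(E)^{W_G(E)}$, I must produce a class in $H^*(G)$ restricting to $x$ on $E$. The strategy is: first lift $x$ to a class $y \in H^*(S)$ with $\mathrm{res}^S_E(y) = x$; then apply the transfer $\mathrm{tr}^G_S \colon H^*(S) \to H^*(G)$ and compute $\mathrm{res}^G_E(\mathrm{tr}^G_S(y))$ via the double coset formula
\[
\mathrm{res}^G_E \, \mathrm{tr}^G_S (y) = \sum_{EgS} \mathrm{tr}^E_{E \cap {}^gS}\, \mathrm{res}^{{}^gS}_{E\cap {}^gS}\, (c_g^* y).
\]
The closedness hypothesis is exactly what controls the terms: for $g$ such that ${}^gS \cap E$ is a full conjugate of $E$ (i.e. $E$ itself, after absorbing into $S$), the corresponding summand is a $W_G(E)$-translate of $x$, of which there are $[W_G(E):W_S(E)]$-many modulo the $p$-Sylow condition, while the remaining ``degenerate'' double cosets — where $E \cap {}^gS$ is a proper subgroup of $E$ — contribute terms that, after transfer from a proper subgroup, are killed by restriction or can be shown to be nilpotent / to lie in the image of transfers from proper subgroups, hence controllable. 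Since $x$ is $W_G(E)$-invariant, the non-degenerate terms add up to $[W_G(E):W_S(E)] \cdot x$, a nonzero multiple of $x$ mod $p$ by the $p$-Sylow hypothesis; inverting this integer mod $p$ and correcting by the degenerate terms (which one handles by a downward induction on the order of $E$, or by the observation that they already lie in $\mathrm{im}(\mathrm{res}^G_{E'})$ for proper $E' < E$) yields the desired preimage.

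The main obstacle is precisely the bookkeeping of the degenerate double cosets in the double coset formula and verifying that their total contribution can be absorbed — this is where the precise form of the closedness hypothesis ``every subgroup of $S$ conjugate to $E$ in $G$ is already conjugate to $E$ in $S$'' is used in an essential way, to guarantee that no ``extra'' full-rank copies of $E$ appear on the $S$-side and that the degenerate terms genuinely come from proper subgroups. Since this is the classical Cárdenas–Kuhn argument, the reference to \cite{A-M} III.5 carries the full details; I would present only the reduction to the $p$-group case and the shape of the double coset computation, deferring the degenerate-term estimate to loc. cit.
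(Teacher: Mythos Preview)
The paper does not supply its own proof of this theorem: it is quoted as a known result, with the proof deferred entirely to \cite{A-M}, Chapter III, \S 5. So there is no ``paper's proof'' to compare against beyond that citation, and your proposal --- which itself explicitly defers the delicate degenerate-coset bookkeeping to the same reference --- is already aligned with what the paper does.

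Your sketch of the argument is the classical one (stable elements for the inclusion $\subseteq$, transfer plus the Mackey double coset formula for $\supseteq$, with the index $[W_G(E):W_S(E)]$ prime to $p$ making the non-degenerate contribution invertible). One small inaccuracy: after reducing to a $p$-group $S$ you assert that $E$ is normal in $S$; this need not hold for an arbitrary elementary abelian subgroup of a $p$-group, and the actual argument in \cite{A-M} does not use such a reduction. This does not affect the overall shape of the proof, but you should either drop that claim or replace $S$ by $N_S(E)$ and re-verify closedness, which is a nontrivial extra step. Otherwise the outline is sound and matches the source the paper cites.
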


We will mostly use this in the form of the following 

\begin{corollary}\xlabel{cCardenasKuhn}
Let $S$ be a $p$-Sylow of  a finite group $G$, and let $E$ be an elementary abelian $p$-subgroup of $S$. Suppose that any subgroup of $S$ conjugate to $E$ in $G$ is conjugate to $E$ in $S$. Then we have
\begin{gather*}
\mathrm{im}\left( \mathrm{res}^{G}_E \, : \, H^* (G, \: \ZZ/p\ZZ) \to H^* (E, \: \ZZ/p\ZZ ) \right)\\ = \mathrm{im}\left( \mathrm{res}^{S}_E \, : \, H^* (S, \: \ZZ/p\ZZ) \to H^* (E, \: \ZZ/p\ZZ ) \right) \cap H^* (E, \: \ZZ/p\ZZ )^{W_G (E)}\, .
\end{gather*}
\end{corollary}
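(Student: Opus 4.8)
The plan is to deduce the Corollary directly from the C\'{a}rdenas--Kuhn Theorem \ref{tCardenasKuhn}, applied to the chain $E \subseteq S \subseteq G$. Of the hypotheses of that theorem, $E$ being $p$-elementary is assumed, and the closedness of the system is precisely the fusion hypothesis (``any subgroup of $S$ conjugate to $E$ in $G$ is conjugate to $E$ in $S$''). So the only point I would actually have to check is that $W_S(E)$ contains a $p$-Sylow subgroup of $W_G(E)$. I would first dispose of the degenerate cases: if $S=G$ there is nothing to prove, since then $\mathrm{res}^G_E=\mathrm{res}^S_E$ and $W_G(E)=W_S(E)$, while the image of any restriction map is automatically contained in the Weyl-invariants (inner automorphisms act trivially on $H^*(G,\ZZ/p\ZZ)$); and if $E=S$ is itself a $p$-Sylow of $G$, the asserted equality is the classical statement that $\mathrm{res}^G_E$ surjects onto $H^*(E,\ZZ/p\ZZ)^{W_G(E)}$, fusion in the abelian group $E$ being controlled by $N_G(E)$ (Burnside). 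Hence I may assume $E \subsetneqq S \subsetneqq G$.

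The heart of the matter is then a group-theoretic lemma: \emph{under the fusion hypothesis, $N_S(E)$ is a $p$-Sylow subgroup of $N_G(E)$}. To prove it I would choose a $p$-Sylow subgroup $P$ of $N_G(E)$ containing $N_S(E)$, note that $E \subseteq N_S(E) \subseteq P$ and that $P$, being a $p$-subgroup of $G$, lies in some $p$-Sylow $S'=gSg^{-1}$ of $G$. Then $g^{-1}Eg \subseteq g^{-1}Pg \subseteq S$, so the fusion hypothesis gives $s\in S$ with $g^{-1}Eg=sEs^{-1}$; setting $h=gs$ one gets $h\in N_G(E)$ and $h^{-1}Ph = s^{-1}(g^{-1}Pg)s \subseteq s^{-1}Ss = S$, while also $h^{-1}Ph \subseteq h^{-1}N_G(E)h = N_G(E)$. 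Therefore $h^{-1}Ph \subseteq S\cap N_G(E) = N_S(E) \subseteq P$, and comparing orders forces $h^{-1}Ph = N_S(E) = P$, which is the claim.

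With the lemma in hand, the conclusion is immediate: since $E$ is a normal $p$-subgroup of $N_G(E)$ contained in $N_S(E)$, and $N_S(E)\in\mathrm{Syl}_p(N_G(E))$, its image $W_S(E)=N_S(E)/E$ is a $p$-Sylow subgroup of $W_G(E)=N_G(E)/E$, hence certainly contains one. All hypotheses of Theorem \ref{tCardenasKuhn} are then satisfied, and it yields exactly the stated formula for $\mathrm{im}(\mathrm{res}^G_E)$. The one step I expect to require genuine care is the italicized lemma: it is precisely here that the fusion hypothesis is indispensable (without it $S$ need not meet $N_G(E)$ in a $p$-Sylow subgroup), so the argument must use that hypothesis at exactly the right moment, namely to replace the arbitrary Sylow-conjugating element $g$ by one, $h=gs$, that normalizes $E$.
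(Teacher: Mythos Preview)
Your proof is correct and follows the same route as the paper: verify the remaining hypothesis of Theorem~\ref{tCardenasKuhn} by showing $N_S(E)\in\mathrm{Syl}_p(N_G(E))$ (equivalently, $[N_G(E):N_S(E)]\not\equiv 0\pmod p$), then apply the theorem. The paper compresses this into the single remark ``$[G:S]\equiv [N_G(E):N_S(E)]\not\equiv 0\pmod p$'', whereas you actually supply the conjugation argument that derives this from the fusion hypothesis and also treat the degenerate cases $E=S$ and $S=G$ excluded by the strict inclusions in Theorem~\ref{tCardenasKuhn}.
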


\begin{proof}
It suffices to remark that $[G : S] \equiv [N_G(E) : N_S(E)]\not\equiv 0$ (mod $p$). 
\end{proof}

\

Now if $n$ is arbitrary (not necessarily a power of $p$), to understand $H^*_{\mathrm{s}} (\mathfrak{A}_{n}, \: \ZZ /p\ZZ )$, expand $n$ in base $p$:
\[
n = a_0 + a_1 p  + \dots + a_m p^m
\]
with $0\le a_j < p$, $a_m \neq 0$, and note that this gives rise to a natural inclusion
\[
i_{a_1, \dots , a_m} \, :\, \mathfrak{A}_{a_1, \dots , a_m}:= \prod_{1}^{a_1} \mathfrak{A}_{p} \times \prod_{1}^{a_2} \mathfrak{A}_{p^2} \times \dots \times \prod_{1}^{a_m} \mathfrak{A}_{p^m} \hookrightarrow \mathfrak{A}_{n}
\]
and that a $p$-Sylow subgroup in $\mathfrak{A}_n$ is given by the product of  $p$-Sylow subgroups in the factors in $\mathfrak{A}_{a_1, \dots , a_m}$. In the notation of Lemma \ref{lConjugacySymmetric}, the group $\mathfrak{A}_{a_1, \dots , a_m}$ contains an elementary abelian $p$-subgroup
\begin{gather*}
E:= \prod_1^{a_1} T_{1,1} \times \prod_{1}^{a_2} T_{1,2} \times \dots \times \prod_{1}^{a_m} T_{1,m} \simeq ( \ZZ/p \ZZ )^{\frac{n-a_0}{p}} \, .
\end{gather*}

\begin{proposition}\xlabel{pDetectionAlternating}
The group $E$ detects the stable cohomology of $\mathfrak{A}_n$, i.e. 
\[
H^*_{\mathrm{s}} (\mathfrak{A}_n, \: \ZZ /p\ZZ ) \to H^*_{\mathrm{s}} (E , \: \ZZ /p\ZZ )
\]
is injective. 
\end{proposition}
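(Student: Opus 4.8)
The plan is to induct on $n$, treating the prime-power case separately. \emph{If $n = a_0 + a_1 p + \dots + a_m p^m$ is not a prime power}, the subgroup $\mathfrak{A}_{a_1,\dots,a_m} = \prod_{j=1}^m (\mathfrak{A}_{p^j})^{\times a_j}$ contains a $p$-Sylow of $\mathfrak{A}_n$, so (stable cohomology being detected by that of a $p$-Sylow, as used in the proof of Theorem \ref{tDetectionAb}) the restriction $H^*_{\mathrm{s}}(\mathfrak{A}_n, \ZZ/p) \hookrightarrow H^*_{\mathrm{s}}(\mathfrak{A}_{a_1,\dots,a_m}, \ZZ/p)$ is injective; since each $H^*_{\mathrm{s}}(\mathfrak{A}_{p^j}, \ZZ/p)$ is detected by elementary abelian $p$-subgroups (Theorem \ref{tDetectionElAb}), Lemma \ref{lKuennethstable} gives $H^*_{\mathrm{s}}(\mathfrak{A}_{a_1,\dots,a_m}, \ZZ/p) \simeq \bigotimes_{j=1}^m H^*_{\mathrm{s}}(\mathfrak{A}_{p^j}, \ZZ/p)^{\otimes a_j}$; applying the Proposition inductively to each $\mathfrak{A}_{p^j}$ (all $p^j < n$), and Lemma \ref{lKuennethFactorAbelian} to identify $\bigotimes_j H^*_{\mathrm{s}}(T_{1,j}, \ZZ/p)^{\otimes a_j}$ with $H^*_{\mathrm{s}}(E, \ZZ/p)$, finishes this case.

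\emph{For $n = p^m$} (so $E = T_{1,m}$), the cases $n \le p$ are trivial ($E$ is then a $p$-Sylow), so let $m \ge 2$ and take $0 \ne a \in H^q_{\mathrm{s}}(\mathfrak{A}_{p^m}, \ZZ/p)$, $q \ge 1$. By Theorem \ref{tDetectionElAb}, $a$ restricts nontrivially to a maximal elementary abelian $p$-subgroup $B$, which by Lemma \ref{lConjugacySymmetric} is $\mathfrak{A}_{p^m}$-conjugate either to a subgroup of $(\mathfrak{A}_{p^{m-1}})^{\times p}$ or to $T_{m,m}$. In the first case $a$ is nonzero already on $(\mathfrak{A}_{p^{m-1}})^{\times p}$, whose stable cohomology is $H^*_{\mathrm{s}}(\mathfrak{A}_{p^{m-1}}, \ZZ/p)^{\otimes p}$ (K\"unneth); by induction and Lemma \ref{lKuennethFactorAbelian} this is detected by $(T_{1,m-1})^{\times p}$, which realizes $(\ZZ/p)^{p^{m-1}}$ acting with $p^{m-1}$ free orbits of length $p$ and so is conjugate in $\mathfrak{S}_{p^m}$ — hence, the conjugacy classes agreeing by Lemmas \ref{lConjugacySymmetric} and \ref{lNormalizerTmm}, in $\mathfrak{A}_{p^m}$ — to $T_{1,m} = E$; since inner automorphisms act trivially on cohomology, $a$ is nonzero on $E$.

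The remaining branch, $B = T_{m,m}$, is the crux. Applying Corollary \ref{cCardenasKuhn} with $S$ a $p$-Sylow containing $T_{m,m}$ (the closedness hypothesis holding by the conjugacy statement in Lemma \ref{lConjugacySymmetric}), and Lemma \ref{lNormalizerTmm} for the Weyl group, the image of $\mathrm{res}^{\mathfrak{A}_{p^m}}_{T_{m,m}}$ lands in $H^*(T_{m,m}, \ZZ/p)^{\mathrm{GL}^+_m(\mathbb{F}_p)}$; intersecting with the stable subalgebra $H^*_{\mathrm{s}}(T_{m,m}, \ZZ/p) = \Lambda(x_1, \dots , x_m)$ (exterior on the degree-one classes, on which $\mathrm{GL}_m(\mathbb{F}_p)$ acts through the contragredient of its standard module), a short computation in modular invariant theory gives that $\Lambda^*(x_1,\dots,x_m)^{\mathrm{GL}^+_m(\mathbb{F}_p)}$ is trivial in positive degrees for $p \ge 5$ — so this branch does not occur and the induction closes — while for $p = 3$, where $\mathrm{GL}^+_m(\mathbb{F}_3) = \mathrm{SL}_m(\mathbb{F}_3)$ and the $\det$-action on $\Lambda^m$ becomes trivial, there remains the top form $x_1 \wedge \dots \wedge x_m$ in degree $m$. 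Since $E$ has no orbit of length $>p$ it contains no conjugate of $T_{m,m}$, so for $p = 3$ one must rule out separately a stable class of $\mathfrak{A}_{3^m}$ restricting to $x_1 \wedge \dots \wedge x_m$ on $T_{m,m}$ while dying on $E$; I expect this to follow from a second application of Corollary \ref{cCardenasKuhn} — reducing the point to whether $x_1 \wedge \dots \wedge x_m$ is the restriction of a \emph{stable} class of the $p$-Sylow $S = G_{m-1} \wr \ZZ/3$ — together with Steenrod's structure theorem \ref{tSteenrod}, writing elements of $H^*(S, \ZZ/3)$ as $t(c_1) + c_2 \cdot P(c_3)$ and tracking which survive to stable cohomology and restrict nontrivially to $T_{m,m}$. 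This $p = 3$ analysis is the main obstacle; for $p \ge 5$ the invariant-theory input is immediate and the argument is purely formal.
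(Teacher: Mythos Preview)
Your reduction to the prime-power case and your inductive step through $(\mathfrak{A}_{p^{m-1}})^{\times p}$ match the paper's proof; the only substantive divergence is in how you dispose of the branch $B=T_{m,m}$. The paper does \emph{not} argue via the $\mathrm{GL}^+_m(\mathbb{F}_p)$-invariants of the exterior algebra. Instead it invokes Mann's explicit description of the image of $\mathrm{res}^{\mathfrak{A}_{p^m}}_{T_{m,m}}$ in $H^*(T_{m,m},\ZZ/p)$ (\cite{Mann85}, Theorem~1.9): every positive-degree class in that image is built from the polynomial generators $\beta x_i$ (Dickson-type invariants), hence vanishes under stabilization because the Bockstein is zero in stable cohomology (Lemma~\ref{lCohomologyOperations}). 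This kills the $T_{m,m}$-branch uniformly for all odd $p$, including $p=3$.

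Your invariant-theory substitute is a clean, self-contained argument for $p\ge 5$: once one knows $\mathrm{SL}_m(\mathbb{F}_p)$ has no invariants in $\Lambda^k$ for $0<k<m$ and that $\mathrm{GL}^+_m(\mathbb{F}_p)\supsetneq \mathrm{SL}_m(\mathbb{F}_p)$ acts nontrivially on $\Lambda^m$ when $p\ge 5$, the branch is empty. But for $p=3$ your argument is genuinely incomplete. You correctly isolate the obstruction (the top form $x_1\wedge\cdots\wedge x_m\in \Lambda^m$ is $\mathrm{SL}_m(\mathbb{F}_3)$-invariant), but the sentence ``I expect this to follow from a second application of Corollary~\ref{cCardenasKuhn} \dots together with Steenrod's structure theorem'' is a plan, not a proof. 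Carrying it out would amount to showing that no stable class of the Sylow $G_m$ restricts to $x_1\wedge\cdots\wedge x_m$ on $T_{m,m}$; this is essentially the content of Mann's computation and is not a one-line consequence of Theorem~\ref{tSteenrod}. So as written the proof has a real gap precisely in the case $p=3$, which is the only case where the stable cohomology is eventually nonzero. Either cite \cite{Mann85}, Theorem~1.9 (as the paper does) to see that the image in $H^*(T_{m,m})$ is contained in the ideal generated by the $\beta x_i$, or supply the Sylow computation in full.
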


\begin{proof}
It will be sufficient to prove this for $n=p^m$ as a K\"unneth theorem holds in stable cohomology for groups whose stable cohomology is detected by abelian subgroups, cf. Lemma \ref{lKuennethstable}. Now $\mathfrak{A}_{p^m}$ contains the wreath product
\[
\mathfrak{A}_{p^{m-1}}\wr \ZZ/ p \ZZ 
\]
which detects the stable cohomology of $\mathfrak{A}_{p^m}$ as it contains a $p$-Sylow. Using induction, it will be sufficient to prove that
\[
H^*_{\mathrm{s}} (\mathfrak{A}_{p^{m}}, \: \ZZ /p\ZZ ) \to H^*_{\mathrm{s}} ( \mathfrak{A}_{p^{m-1}}\times \dots \times \mathfrak{A}_{p^{m-1}})
\]
is injective for $m>1$.  By Lemma \ref{lConjugacySymmetric} and because $H^*_s (\mathfrak{A}_{n}, \: \ZZ/p\ZZ )$ is detected by elementary abelian $p$-subgroups, it will be sufficient to show that all positive-dimensional classes in $H^* (T_{m, m}, \: \ZZ /p \ZZ )$ coming as restrictions from $H^* (\mathfrak{A}_{p^m}, \: \ZZ / p\ZZ )$ are unstable. This follows from the calculation in \cite{Mann85}, Theorem 1.9, and the fact that the Bocksteins are zero in stable cohomology.
\end{proof}

\begin{theorem}\xlabel{tStableCohomologyAlternating}
Let $p$ be an odd prime as before. Then $H^*_{\mathrm{s}} (\mathfrak{A}_n, \: \ZZ/p\ZZ ) = 0$ in positive degrees unless $p=3$. For $p=3$ one has for $k\in\mathbb{N}$
\[
H^*_{\mathrm{s}} (\mathfrak{A}_{3k}, \: \ZZ/3\ZZ ) \simeq H^*_{\mathrm{s}} (\mathfrak{A}_{3k+1}, \: \ZZ/3\ZZ ) , \quad H^d_{\mathrm{s}} (\mathfrak{A}_{3k+2}, \: \ZZ/3\ZZ ) = 0 , \: d>0 \, ,
\] 
and 
\begin{gather*}
H^d_{\mathrm{s}} (\mathfrak{A}_{3k}, \: \ZZ/3\ZZ ) \neq 0 \, \mathrm{for}\; d>0 \iff d= k , \; \mathrm{and} \\
H^k_{\mathrm{s}} (\mathfrak{A}_{3k}, \: \ZZ/3\ZZ ) \simeq \langle \mathrm{det}_k \rangle \; \mathrm{where} \; \mathrm{res}^{\mathfrak{A}_{3k}}_{E} (\mathrm{det}_k) = e_1\wedge \dots \wedge e_k 
\end{gather*}
where $H^*_{\mathrm{s}} ( E, \:Ê\ZZ/3\ZZ ) = H^*_{\mathrm{s}} ((\ZZ/3\ZZ )^k, \: \ZZ/3\ZZ )$ is an exterior algebra on one-dimensional generators $e_1, \dots, e_k$. 
\end{theorem}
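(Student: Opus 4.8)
The plan is to reduce the whole computation, via Proposition \ref{pDetectionAlternating}, to understanding the image of $H^*_{\mathrm{s}}(\mathfrak{A}_n,\ZZ/p)$ inside the exterior algebra $H^*_{\mathrm{s}}(E,\ZZ/p)$, and then to pin down that image by combining the C\'ardenas–Kuhn Corollary \ref{cCardenasKuhn} with the Weyl-group invariance from Lemma \ref{lNormalizerTmm}. First I would treat $n=p^m$. A stable class restricts nontrivially to $E=\prod T_{1,j}$, so it is detected there; the image of $H^*_{\mathrm{s}}$ in $H^*_{\mathrm{s}}(E)=\Lambda(e_1,\dots,e_{(n-a_0)/p})$ must lie in the $W$-invariants for the relevant Weyl groups acting on the factors $T_{1,j}$, and — crucially — must consist of classes with vanishing Bockstein (Lemma \ref{lCohomologyOperations}, since $\beta=P^{?}$-type operations vanish in stable cohomology; more precisely $\beta$ is zero on stable classes). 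The ordinary cohomology $H^*(T_{1,j},\ZZ/p)=\Lambda(x)\otimes\ZZ/p[y]$ with $y=\beta x$; the only Bockstein-closed classes surviving in a one-dimensional exterior-generator picture are built from the $x$'s, not the polynomial generators. So the candidate stable classes on $E$ are spanned by products $e_{i_1}\wedge\dots\wedge e_{i_k}$ of the one-dimensional classes. It then remains to decide which of these are actually hit from $\mathfrak{A}_n$.

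The key input is the normalizer computation: by Lemma \ref{lConjugacySymmetric}, detection already reduces (inductively via the wreath product $\mathfrak{A}_{p^{m-1}}\wr\ZZ/p$ and the splitting into $\mathfrak{A}_{p^{m-1}}^{\times p}$) to the single "diagonal" subgroup $T_{m,m}\simeq(\ZZ/p)^m$, whose Weyl group in $\mathfrak{A}_{p^m}$ is $\mathrm{GL}^+_m(\mathbb{F}_p)$, the kernel of $\det^{(p-1)/2}$. So I would invoke \cite{Mann85}, Theorem 1.9, to identify the image of $\mathrm{res}^{\mathfrak{A}_{p^m}}_{T_{m,m}}$, intersect with the Bockstein-closed part (the exterior subalgebra $\Lambda(x_1,\dots,x_m)$), and take $\mathrm{GL}^+_m(\mathbb{F}_p)$-invariants. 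The one-dimensional classes $x_1,\dots,x_m$ transform as the standard representation of $\mathrm{GL}_m(\mathbb{F}_p)$ over $\ZZ/p$; the only invariant exterior monomial is the top one $x_1\wedge\dots\wedge x_m$, and this is $\mathrm{GL}^+_m$-invariant precisely because $\mathrm{GL}^+_m$ acts on $\Lambda^m$ through $\det\colon\mathrm{GL}^+_m\to\mathbb{F}_p^*$ raised to a power — and the point is that $x_1\wedge\dots\wedge x_m$ is fixed iff that character is trivial on $\mathrm{GL}^+_m$, which (since $\mathrm{GL}^+_m$ is the kernel of $\det^{(p-1)/2}$, i.e. contains all $\det$-values that are squares) happens exactly when $p=3$: then $\det^{(p-1)/2}=\det$, $\mathrm{GL}^+_m=\mathrm{SL}_m(\mathbb{F}_3)$, on which $\det$ is trivial, so $x_1\wedge\dots\wedge x_m$ survives; for $p>3$ there are nontrivial squares in $\mathbb{F}_p^*$ lying outside the kernel of $\det^{(p-1)/2}$, so no positive-degree invariant exterior class, giving $H^*_{\mathrm{s}}(\mathfrak{A}_{p^m})=0$ in positive degrees. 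I would double-check the $\mathrm{GL}^+_m$ bookkeeping carefully since the parity conventions in Lemma \ref{lNormalizerTmm} are where an error would creep in.

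Having settled $n=p^m$, I would assemble the general case by the K\"unneth formula of Lemma \ref{lKuennethstable} (applicable since all stable cohomologies here are detected by abelian subgroups), using $\mathfrak{A}_n\supset\prod_j\prod^{a_j}\mathfrak{A}_{p^j}$ with matching $p$-Sylow, hence $H^*_{\mathrm{s}}(\mathfrak{A}_n)\cong\bigotimes_j H^*_{\mathrm{s}}(\mathfrak{A}_{p^j})^{\otimes a_j}$. For $p>3$ every factor is trivial in positive degree, so $H^*_{\mathrm{s}}(\mathfrak{A}_n)=0$ in positive degrees. For $p=3$: $H^*_{\mathrm{s}}(\mathfrak{A}_{3^j})$ contributes a single class $\mathrm{det}$ in degree $3^{j-1}$ (the number of exterior generators in the relevant $E$ for $\mathfrak{A}_{3^j}$, namely $(3^j-0)/3=3^{j-1}$), wait — I must be careful: for $\mathfrak{A}_{3^j}$ with $j\ge1$ the top class sits in degree equal to the rank $m=j$ of $T_{m,m}$, i.e. degree $j$; so $H^*_{\mathrm{s}}(\mathfrak{A}_{3^j})=\ZZ/3$ in degree $0$ and degree $j$. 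Then for $n=3k$ (resp.\ $3k+1$, which has the same $p$-Sylow hence the same stable cohomology — the extra fixed letter contributes $\mathfrak{A}_1$) the tensor product of the degree-$j$ generators over the base-$3$ digits $a_j$ of $n$ lands in total degree $\sum_j j\cdot a_j$. One checks $\sum_j j a_j=k$ when $n=3k$ or $3k+1$ (since $k=(n-a_0)/3$ and $\sum_j j a_j 3^{j-1}\cdot$… — here the clean statement is $\sum_{j\ge1} a_j\cdot(\text{rank of the }E\text{-factor})$; the rank contributed by one copy of $\mathfrak{A}_{p^j}$ is $p^{j-1}$, and $\sum_j a_j p^{j-1}=(n-a_0)/p=k$), giving a one-dimensional $H^k_{\mathrm{s}}$ generated by the external product of the $\mathrm{det}$'s, restricting to $e_1\wedge\dots\wedge e_k$ on $E\simeq(\ZZ/3)^k$; and $H^d_{\mathrm{s}}=0$ for $0<d\ne k$. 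When $n\equiv2\pmod 3$, $a_0=2$ changes nothing in the digits $a_j$, $j\ge1$, so the stable cohomology is still that of $\mathfrak{A}_{3k}$ in positive degrees — hmm, that contradicts the claimed vanishing, so here I would need the observation that for $n=3k+2$ one of the $\mathfrak{A}_p=\mathfrak{A}_3=\ZZ/3$ factors actually does contribute (a one-dimensional class), shifting degrees; the cleanest fix is to note $H^1_{\mathrm{s}}(\mathfrak{A}_3,\ZZ/3)=\ZZ/3$ but $H^1_{\mathrm{s}}(\mathfrak{A}_n,\ZZ/3)=0$ for $n\ge5$ by the character computation in the proof of Theorem \ref{tDetectionElAb}, forcing the product structure and the total degree to work out as stated — I expect reconciling these digit-counting identities with the asserted vanishing for $n\equiv2\pmod 3$ to be the fiddliest bookkeeping, and the place I would write out explicitly rather than wave at. The genuinely substantive obstacle, though, is the $\mathrm{GL}^+_m(\mathbb{F}_p)$-invariant computation together with the appeal to \cite{Mann85} to know the full image of the restriction to $T_{m,m}$ before taking invariants and before cutting down by the Bockstein-vanishing constraint.
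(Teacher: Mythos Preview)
There are two genuine errors in your approach.

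\textbf{Wrong detecting subgroup.} You propose to compute $H^*_{\mathrm{s}}(\mathfrak{A}_{p^m})$ via the ``diagonal'' subgroup $T_{m,m}\simeq(\ZZ/p)^m$ with Weyl group $\mathrm{GL}_m^+(\mathbb{F}_p)$, finding the surviving class $x_1\wedge\dots\wedge x_m$ in degree $m$. But Proposition \ref{pDetectionAlternating} uses Mann's computation \cite{Mann85} in exactly the opposite direction: it shows that every positive-degree class in $H^*(T_{m,m})$ lying in the image of restriction from $\mathfrak{A}_{p^m}$ is \emph{unstable} (built from Bocksteins), so $T_{m,m}$ detects nothing stable at all. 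The detecting subgroup is the large $E=T_{1,m}\simeq(\ZZ/p)^{p^{m-1}}$, and the class lives in degree $p^{m-1}$, not $m$. For $\mathfrak{A}_9$ your argument produces a class in degree $2$; the theorem says degree $3$. The relevant Weyl group is not $\mathrm{GL}_m^+(\mathbb{F}_p)$ but the one in Lemma \ref{lWeylGroupAlternating}: an extension of $\mathfrak{S}_k$ by $(\ZZ/2)^{k-1}$ acting on $\Lambda(e_1,\dots,e_k)$ by signed permutations and even sign changes, whose only positive-degree invariant is $e_1\wedge\dots\wedge e_k$. (Your square-units observation for $p>3$ is the right idea, but it has to be run on this Weyl group of $E$, as in Lemma \ref{lVanishingAlternating}.)

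\textbf{False K\"unneth assembly.} The claim $H^*_{\mathrm{s}}(\mathfrak{A}_n)\cong\bigotimes_j H^*_{\mathrm{s}}(\mathfrak{A}_{p^j})^{\otimes a_j}$ is wrong. Sharing a $p$-Sylow with $\prod_j\mathfrak{A}_{p^j}^{a_j}$ gives only an injection of both stable cohomologies into $H^*_{\mathrm{s}}(\mathrm{Syl}_p)$; it does not identify them. Already for $n=6$: $H^*_{\mathrm{s}}(\mathfrak{A}_3\times\mathfrak{A}_3,\ZZ/3)=\Lambda(e_1,e_2)$ has dimension $4$, while $H^*_{\mathrm{s}}(\mathfrak{A}_6,\ZZ/3)$ has dimension $2$, because the Weyl group of $E$ in $\mathfrak{A}_6$ is strictly larger than in the product. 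The paper therefore does not reduce to prime-power $n$ for the final computation; it works directly with $E\subset\mathfrak{A}_{3k}$, applies C\'ardenas--Kuhn (closedness via \cite{Mui}, Prop.~2.2), shows $\mathrm{res}^{\mathfrak{A}_{3k}}_E$ surjects onto $H^*(E)^{W}$ using Theorem \ref{tSteenrod}, and then computes those invariants.

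Your difficulty with $n=3k+2$ is a symptom of the second error: a tensor decomposition over the base-$3$ digits cannot distinguish $3k$ from $3k+2$, since only $a_0$ changes. The paper handles this case separately (Lemma \ref{lPeriodicityAlternating}): restriction from $\mathfrak{A}_{3k+2}$ to its $3$-Sylow factors through $\mathfrak{S}_{3k}\hookrightarrow\mathfrak{A}_{3k+2}$, and $H^*_{\mathrm{s}}(\mathfrak{S}_{3k},\ZZ/3)=0$ in positive degrees.
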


Basically, we would like to use the Card\'{e}nas-Kuhn Theorem with the elementary abelian subgroup $E$, and $S= \mathrm{Syl}_p (\mathfrak{A}_n)$, $G= \mathfrak{A}_n$, but it will be more transparent to break it up into several steps.

\begin{lemma}\xlabel{lVanishingAlternating}
For $p\neq 3$ an odd prime we have in positive degrees  $H^*_{\mathrm{s}} (\mathfrak{A}_n, \: \ZZ/p\ZZ ) = 0$.
\end{lemma}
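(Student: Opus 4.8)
The plan is to combine the detection result of Proposition \ref{pDetectionAlternating} with the C\'{a}rdenas--Kuhn Theorem and known computations of the image of the restriction map to the maximal elementary abelian subgroup $T_{m,m}$. By Proposition \ref{pDetectionAlternating} and the K\"unneth formula (Lemma \ref{lKuennethstable}), the stable cohomology of $\mathfrak{A}_n$ for general $n$ is a tensor product of the stable cohomologies of the factors $\mathfrak{A}_{p^j}$ occurring in its base-$p$ expansion, so it suffices to treat $n = p^m$. Again by Proposition \ref{pDetectionAlternating}, $H^*_{\mathrm{s}}(\mathfrak{A}_{p^m}, \ZZ/p\ZZ)$ injects into $H^*_{\mathrm{s}}(E, \ZZ/p\ZZ)$ where $E = \prod_1^{?} T_{1,m}$... but for $n = p^m$ precisely one has $E = T_{1,m} \simeq (\ZZ/p\ZZ)^{p^{m-1}}$; wait, more carefully, the relevant detection for $n = p^m$ reduces (via the wreath product $\mathfrak{A}_{p^{m-1}}\wr\ZZ/p\ZZ$ and induction) to detection by the product $\mathfrak{A}_{p^{m-1}}\times\cdots\times\mathfrak{A}_{p^{m-1}}$ and by $T_{m,m}$, and the proof of Proposition \ref{pDetectionAlternating} shows all positive-dimensional classes restricted to $T_{m,m}$ are unstable for $m>1$. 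So the only ``new'' stable cohomology in passing from $\mathfrak{A}_{p^{m-1}}$ to $\mathfrak{A}_{p^m}$ sits in the wreath product part.

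First I would set up the induction on $m$. For $m=1$, $\mathfrak{A}_p$ has a cyclic $p$-Sylow $\ZZ/p\ZZ$, and one computes $H^*_{\mathrm{s}}(\mathfrak{A}_p, \ZZ/p\ZZ)$ directly: by the C\'{a}rdenas--Kuhn Corollary \ref{cCardenasKuhn} with $E = S = \ZZ/p\ZZ$ and $G = \mathfrak{A}_p$, the image of restriction is $H^*(\ZZ/p\ZZ)^{W}$ where $W = W_{\mathfrak{A}_p}(\ZZ/p\ZZ) = N_{\mathfrak{A}_p}(\ZZ/p\ZZ)/(\ZZ/p\ZZ)$. Here $N_{\mathfrak{S}_p}(\ZZ/p\ZZ)/(\ZZ/p\ZZ) \simeq \mathbb{F}_p^\times$ and the alternating version $W_{\mathfrak{A}_p}$ is the index-two (or full, depending on parity) subgroup; one checks it acts on $H^1(\ZZ/p\ZZ) = \ZZ/p\ZZ$ by a character of order $(p-1)$ or $(p-1)/2$. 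Then one intersects with the stable classes: the stable cohomology of $\ZZ/p\ZZ$ is the exterior algebra on the single one-dimensional generator (the Bockstein/polynomial part being unstable, by Lemma \ref{lCohomologyOperations} the Bocksteins vanish), so one only needs the invariants of $W_{\mathfrak{A}_p}$ acting on $\Lambda^*(\ZZ/p\ZZ\cdot e_1)$, i.e.\ on the span of $1$ and $e_1$. The generator $e_1$ is fixed iff $W_{\mathfrak{A}_p}$ acts trivially on $H^1$, which happens precisely when $|\mathbb{F}_p^\times|/\gcd = 1$, i.e.\ when $p = 3$ (where $W_{\mathfrak{A}_3}$ is trivial since $\mathfrak{A}_3 = \ZZ/3\ZZ$). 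For $p \neq 3$ this forces $H^{>0}_{\mathrm{s}}(\mathfrak{A}_p, \ZZ/p\ZZ) = 0$, which is exactly the base case of Lemma \ref{lVanishingAlternating}.

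Next, for the inductive step with $p\neq 3$ and $m>1$: by Proposition \ref{pDetectionAlternating}, $H^*_{\mathrm{s}}(\mathfrak{A}_{p^m})$ injects into $H^*_{\mathrm{s}}(\mathfrak{A}_{p^{m-1}}\times\cdots\times\mathfrak{A}_{p^{m-1}})$ ($p$ factors), which by the K\"unneth formula is a tensor power of $H^*_{\mathrm{s}}(\mathfrak{A}_{p^{m-1}})$; by the induction hypothesis the latter vanishes in positive degrees, so $H^{>0}_{\mathrm{s}}(\mathfrak{A}_{p^m}, \ZZ/p\ZZ) = 0$ as well. This closes the induction and proves Lemma \ref{lVanishingAlternating}, and then the general-$n$ statement for $p\neq 3$ follows from the K\"unneth reduction. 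For $p = 3$, I would run the analogous computation: $H^*_{\mathrm{s}}(\mathfrak{A}_3, \ZZ/3\ZZ) = H^*_{\mathrm{s}}(\ZZ/3\ZZ, \ZZ/3\ZZ) = \Lambda(e_1)$ (one-dimensional in degree $1$), and then track how the class $e_1$ in the bottom factor propagates. The point is that $\mathfrak{A}_{3k}$ contains $\prod_1^k \mathfrak{A}_3 \times (\text{higher pieces that contribute nothing in positive degree by the $p=3$, $m>1$ vanishing analogous to above})$, so $H^*_{\mathrm{s}}(\mathfrak{A}_{3k}, \ZZ/3\ZZ) \simeq \Lambda(e_1)^{\otimes k}$ restricted to the image of the restriction map from $\mathfrak{A}_{3k}$ — one then uses the C\'{a}rdenas--Kuhn theorem once more, with $G = \mathfrak{A}_{3k}$, $S = \mathrm{Syl}_3$, $E = $ the rank-$k$ elementary abelian subgroup, and the Weyl-group invariance cuts the tensor power $\Lambda(e_1,\dots,e_k)$ down to the span of $1$ and the top wedge $e_1\wedge\dots\wedge e_k =: \mathrm{det}_k$, because the relevant Weyl group contains a symmetric group $\mathfrak{S}_k$ permuting the $e_i$ (from permuting the blocks) together with the individual order-two/trivial sign twists, and the only invariants of $\mathfrak{S}_k$ acting on an exterior algebra on $k$ generators-in-odd-degree via permutation-plus-sign are $1$ and the product. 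This gives $H^k_{\mathrm{s}}(\mathfrak{A}_{3k}, \ZZ/3\ZZ) = \langle\mathrm{det}_k\rangle$ in degree $k$ and zero in other positive degrees, and the isomorphism $H^*_{\mathrm{s}}(\mathfrak{A}_{3k}) \simeq H^*_{\mathrm{s}}(\mathfrak{A}_{3k+1})$ as well as the vanishing for $\mathfrak{A}_{3k+2}$ follow from the base-$p$ expansion, since $3k+1$ has the same ``$\mathfrak{A}_{3^j}$'' content plus an extra fixed point contributing $\mathfrak{A}_1$, while $3k+2$ forces an $\mathfrak{A}_2$-type factor with no $3$-torsion.

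The main obstacle I expect is the precise bookkeeping of the Weyl-group action on $H^*(E, \ZZ/3\ZZ)$ and verifying that its invariant subspace, intersected with the image of $\mathrm{res}^S_E$ and with the stable (exterior) part, is exactly $\langle 1, \mathrm{det}_k\rangle$: one must know that the restriction $H^*(\mathfrak{A}_{3^m}, \ZZ/3\ZZ) \to H^*(T_{m,m}, \ZZ/3\ZZ)$ hits only unstable classes for $m > 1$ (this is the content of the cited computation in \cite{Mann85}, Theorem 1.9, combined with the vanishing of Bocksteins in stable cohomology, Lemma \ref{lCohomologyOperations}), and that for the rank-$k$ subgroup $E = \prod T_{1,1}$ the only surviving stable invariant beyond the constants is the top exterior class. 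Getting the Weyl group right — in particular confirming that the sign subtleties ($\mathrm{GL}^+_m$ versus $\mathrm{GL}_m$, and $\mathfrak{A}_k$ versus $\mathfrak{S}_k$ acting on the blocks) do not accidentally kill $\mathrm{det}_k$ or leave extra invariants — is where the care is needed; everything else is a clean induction driven by Proposition \ref{pDetectionAlternating} and the K\"unneth formula.
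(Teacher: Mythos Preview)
Your argument for the lemma itself is correct but proceeds by a genuinely different route from the paper. The paper works directly with the detecting subgroup $E\simeq(\ZZ/p)^N$ for arbitrary $n$: it observes that $W_{\mathfrak{A}_n}(E)$ contains both the block-permuting $\mathfrak{A}_N$ and the diagonal scalings $\prod_1^N(\ZZ/p)^{*,+}$, computes the $\mathfrak{A}_N$-invariants of $\Lambda(e_1,\dots,e_N)$ explicitly (they live only in degrees $0,1,N-1,N$), and then notes that for $p\neq 3$ the nontrivial scaling group $(\ZZ/p)^{*,+}$ kills all of the positive-degree ones. You instead run an induction on $m$ after reducing to $n=p^m$: the base case $m=1$ is exactly the paper's argument specialized to $N=1$ (the Weyl group $(\ZZ/p)^{*,+}$ acts nontrivially on $e_1$ precisely when $p\neq 3$), and the inductive step uses the injection $H^*_{\mathrm{s}}(\mathfrak{A}_{p^m})\hookrightarrow H^*_{\mathrm{s}}\bigl((\mathfrak{A}_{p^{m-1}})^p\bigr)$ established inside the proof of Proposition~\ref{pDetectionAlternating}, together with K\"unneth. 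Your method avoids the explicit determination of the $\mathfrak{A}_N$-invariants of the exterior algebra, which is a modest simplification for this lemma in isolation; the paper's direct computation, on the other hand, sets up the framework that is immediately reused for the $p=3$ analysis in Lemma~\ref{lWeylGroupAlternating} and the proof of Theorem~\ref{tStableCohomologyAlternating}.

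Two minor remarks. First, your write-up drifts well past the statement of the lemma into the $p=3$ case and the structure of $\mathrm{det}_k$; that material belongs to the later results, not here. Second, the injection you invoke in the inductive step is not the \emph{statement} of Proposition~\ref{pDetectionAlternating} but an intermediate step in its proof, so cite it as such.
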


\begin{proof}
The Weyl group $W_{\mathfrak{A}_n}(E)$ contains two obvious subgroups: (1) the group $\mathfrak{A}_{N}$ permuting the $N:=(n-a_0)/p$ copies of $\ZZ/p\ZZ$ in $E$, (2) a product $\prod_{1}^{N} (\ZZ/ p\ZZ )^{* , +}$ where $(\ZZ/p\ZZ)^{*, +}$ is the subgroup of the group of units in $\ZZ/p\ZZ$ given as the kernel of $a\mapsto a^{(p-1)/2}$. The stable cohomology of $E$ is an exterior algebra over $\ZZ /p\ZZ$ on $N$ generators $e_1, \dots , e_N$. The $\mathfrak{A}_N$-invariants are concentrated in degrees $0, \: 1, \: (N-1), \: N$, one-dimensional in each case and generated by
\[
1, \quad e_1 + \dots + e_N, \quad f_1\wedge \dots \wedge f_{N-1} , \quad e_1 \wedge \dots \wedge e_N \, ,
\]
where $f_1, \dots ,\: f_{N-1}$ is a basis of the $\mathfrak{A}_{N}$-invariant complement to $e_1+\dots + e_N$ in $H^1 (E, \: \ZZ/p\ZZ )$. All of these are not invariant under the scalings in $\prod_{1}^{N} (\ZZ/ p\ZZ )^{* , +}$ unless $p=3$ when $(\ZZ/ p\ZZ )^{* , +}$ is reduced to $\{ 1\}$. 
\end{proof}

\begin{lemma}\xlabel{lPeriodicityAlternating}
One has 
\begin{itemize}
\item[(1)]
$H^d_{\mathrm{s}} (\mathfrak{A}_{3k+2}, \: \ZZ/3\ZZ ) = 0 , \: d>0 \, .$
\item[(2)]
There is a natural embedding
\[
H^*_{\mathrm{s}} (\mathfrak{A}_{3k+1}, \: \ZZ/3\ZZ )\hookrightarrow H^*_{\mathrm{s}} (\mathfrak{A}_{3k}, \: \ZZ/3\ZZ )\, .
\]
\end{itemize}
\end{lemma}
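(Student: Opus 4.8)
The plan is to handle the two statements by the same circle of ideas used in Lemma \ref{lVanishingAlternating}: detection by the elementary abelian subgroup $E$ (Proposition \ref{pDetectionAlternating}) together with the fact that the image of a restriction map to $E$ lands in the $W_{\mathfrak{A}_n}(E)$-invariants. The point to exploit is that for $n=3k$, $3k+1$, $3k+2$ the base-$3$ digits $a_1,\dots,a_m$ agree (adding $1$ or $2$ to a multiple of $3$ causes no carry), so the detecting subgroup is in all three cases the same $E\simeq(\ZZ/3)^k$ acting by left translations on $k$ disjoint blocks of $3$ letters inside $\{1,\dots,3k\}$, with $H^*_{\mathrm{s}}(E)=\Lambda(e_1,\dots,e_k)$; the three cases differ only in the number $a_0\in\{0,1,2\}$ of letters fixed by $E$, hence in $W_{\mathfrak{A}_n}(E)$.

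For part (2) I would observe that $\mathfrak{A}_{3k}\subset\mathfrak{A}_{3k+1}$ as the stabiliser of the last letter, and that the copy of $E$ detecting $H^*_{\mathrm{s}}(\mathfrak{A}_{3k+1})$ may be taken inside this $\mathfrak{A}_{3k}$. Then the restriction to $E$ factors as $\mathrm{res}^{\mathfrak{A}_{3k+1}}_E=\mathrm{res}^{\mathfrak{A}_{3k}}_E\circ\mathrm{res}^{\mathfrak{A}_{3k+1}}_{\mathfrak{A}_{3k}}$, and since the composite is injective by Proposition \ref{pDetectionAlternating}, the first factor $\mathrm{res}^{\mathfrak{A}_{3k+1}}_{\mathfrak{A}_{3k}}\colon H^*_{\mathrm{s}}(\mathfrak{A}_{3k+1},\ZZ/3)\to H^*_{\mathrm{s}}(\mathfrak{A}_{3k},\ZZ/3)$ is injective, which is the asserted natural embedding.

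For part (1) the substance is a Weyl group computation. By Proposition \ref{pDetectionAlternating} it suffices to show that the image of $\mathrm{res}^{\mathfrak{A}_{3k+2}}_E\colon H^*_{\mathrm{s}}(\mathfrak{A}_{3k+2})\to\Lambda(e_1,\dots,e_k)$ vanishes in positive degrees; as usual this image lies in $\Lambda(e_1,\dots,e_k)^{W_{\mathfrak{A}_{3k+2}}(E)}$ (inner automorphisms act trivially on cohomology, and this passes to stable cohomology by naturality of stabilisation). I would then prove that $W_{\mathfrak{A}_{3k+2}}(E)$ contains the full group $(\ZZ/2)^k$ of sign changes $e_i\mapsto\pm e_i$: the automorphism $x\mapsto -x$ of the $i$-th $\ZZ/3$-block is realised by the transposition $\sigma_i$ of the two non-fixed points of that block, which normalises $E$ and acts on $H^1(E)$ by $e_i\mapsto -e_i$, $e_j\mapsto e_j$ for $j\ne i$. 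A product $\sigma_{i_1}\cdots\sigma_{i_r}$ is even, hence lies in $\mathfrak{A}_{3k}\subset\mathfrak{A}_{3k+2}$, when $r$ is even; when $r$ is odd one multiplies by the transposition $\tau=(3k+1,\ 3k+2)$ of the two letters fixed by $E$, which centralises $E$, so that $\sigma_{i_1}\cdots\sigma_{i_r}\tau\in\mathfrak{A}_{3k+2}$ still induces the sign change supported on $\{i_1,\dots,i_r\}$. Since any positive-degree monomial $e_S$ ($S\neq\emptyset$) is negated by the sign change on any $i\in S$, and the $(\ZZ/2)^k$-action on $\Lambda(e_1,\dots,e_k)$ is monomial, the invariants are concentrated in degree $0$; hence $\mathrm{res}^{\mathfrak{A}_{3k+2}}_E$ is zero in positive degrees, and by the injectivity in Proposition \ref{pDetectionAlternating}, $H^d_{\mathrm{s}}(\mathfrak{A}_{3k+2},\ZZ/3)=0$ for $d>0$.

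The main obstacle I anticipate is the bookkeeping in part (1): one has to see precisely that the transposition $\tau$ on the two letters fixed by $E$ is exactly what promotes the scalings visible inside $\mathfrak{A}_{3k}$ — which form only the index-$2$ subgroup of \emph{even} sign changes, and therefore still fix the top class $e_1\wedge\dots\wedge e_k=\mathrm{res}^{\mathfrak{A}_{3k}}_E(\mathrm{det}_k)$ — to the full $(\ZZ/2)^k$, which kills that surviving class as well. This is the one genuine difference between $3k+2$ and $3k$. Part (2) has essentially no obstacle once the factorisation of $\mathrm{res}^{\mathfrak{A}_{3k+1}}_E$ is noticed; I would only double-check the minor points that the base-$3$ expansions really match and that ``image of restriction $\subseteq$ Weyl invariants'' is valid in stable, not merely ordinary, cohomology.
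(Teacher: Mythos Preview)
Your proof is correct. Part (2) is essentially the paper's argument: the paper simply says that the $3$-Sylows of $\mathfrak{A}_{3k}$ and $\mathfrak{A}_{3k+1}$ coincide, which is the same mechanism as your factorisation of $\mathrm{res}^{\mathfrak{A}_{3k+1}}_E$ through $\mathfrak{A}_{3k}$.

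For part (1) you take a genuinely different route. The paper observes that the restriction to $\mathrm{Syl}_3(\mathfrak{A}_{3k+2})$ factors through the embedding $\mathfrak{S}_{3k}\hookrightarrow\mathfrak{A}_{3k+2}$ (send an odd permutation $g$ to $g\cdot(3k{+}1,\,3k{+}2)$), and then invokes the known fact that $H^*_{\mathrm{s}}(\mathfrak{S}_{3k},\ZZ/3)=0$ in positive degrees. You instead compute the Weyl invariants in $H^*_{\mathrm{s}}(E)$ directly, showing that the transposition $\tau=(3k{+}1,\,3k{+}2)$ promotes the even sign changes visible in $W_{\mathfrak{A}_{3k}}(E)$ to the full $(\ZZ/2)^k$, which annihilates every positive-degree monomial. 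The two arguments are in fact the same phenomenon viewed from different angles: your elements $\sigma_i\tau$ lie precisely in the paper's copy of $\mathfrak{S}_{3k}$. The paper's version is a one-liner once the vanishing for symmetric groups is granted; your version is self-contained, avoids that external input, and has the virtue of making transparent exactly why $3k+2$ kills the surviving class $e_1\wedge\dots\wedge e_k$ that persists for $3k$.
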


\begin{proof}
This is already contained in \cite{B-P}, Lemmas 4.1 and 4.2. For completeness, let us repeat the argument: the restriction $\mathrm{res}^{\mathfrak{A}_{3k+2}}_{\mathrm{Syl}_3 (\mathfrak{A}_{3k+2} )}$ factors through the restriction map induced from the embedding $\mathfrak{S}_{3k} \hookrightarrow \mathfrak{A}_{3k+2}$; but $H^*_{\mathrm{s}} (\mathfrak{S}_{3k}, \: \ZZ/3\ZZ ) =0$ in positive degrees as the stable cohomology of $\mathfrak{S}_{3k}$ is detected by its elementary abelian $2$-subgroup generated by a maximal set of commuting transpositions. This proves (1), and (2) follows from the fact that the $3$-Sylows in $\mathfrak{A}_{3k}$ and $\mathfrak{A}_{3k+1}$ are the same.
\end{proof}

\begin{lemma}\xlabel{lWeylGroupAlternating}
Let $n= 3k$ or $n=3k+1$. Then the Weyl group $N_{\mathfrak{A}_n} (E)$ of $E\simeq (\ZZ/3\ZZ )^k$ in $\mathfrak{A}_n$ sits in an extension
\[
1 \to ( \ZZ/2\ZZ)^{k-1}    \to W_{\mathfrak{A}_n} (E) \to \mathfrak{S}_{k}  \to 1
\]
where  $\mathfrak{S}_k$ acts by permuting the copies of $\ZZ/3\ZZ$ in $E\simeq ( \ZZ/3\ZZ )^k$, and the group $(\ZZ/2\ZZ )^{k-1}$ acts by sending an \emph{even} number of the generators $g_i$ in the $i$th copy of $\ZZ/3\ZZ$ to their inverses $g_i^{-1}$. In stable cohomology $H^*_{\mathrm{s}} (E, \: \ZZ/3\ZZ ) = E(e_1, \dots ,\: e_k)$ (exterior algebra), the action of the group $W_{\mathfrak{A}_n} (E)$ is generated by the (signed) transpositions sending $e_i, \: e_j$ to $e_j, \: -e_i$, and transformations corresponding to elements in $(\ZZ/2\ZZ )^{k-1}$ acting via sign changes $e_i\mapsto -e_i$ on an \emph{even} number of the $e_i$.
\end{lemma}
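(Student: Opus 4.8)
The plan is to compute the normalizer $N_{\mathfrak{S}_n}(E)$ first and then cut down to $\mathfrak{A}_n$. Recall from Lemma \ref{lConjugacySymmetric} that for $n=3k$ the group $E = \prod_{1}^{k} T_{1,1}$ sits inside $\mathfrak{A}_{3k}$ as a product of $k$ copies of $\ZZ/3\ZZ$, each acting on a block of three letters via its regular representation (for $n=3k+1$ there is one extra fixed letter, which changes nothing in the normalizer computation since any permutation normalizing $E$ must fix the set of moved letters and hence the extra point). An element $\sigma \in \mathfrak{S}_n$ normalizes $E$ if and only if conjugation by $\sigma$ permutes the set of $k$ blocks and, on each block, induces an automorphism of the corresponding $\ZZ/3\ZZ$ compatible with the regular action; since $\mathrm{Aut}(\ZZ/3\ZZ) = \ZZ/2\ZZ$ and all automorphisms of $\ZZ/3\ZZ$ are realized in $\mathfrak{S}_3$ (by a transposition, which is odd), one gets the identification $N_{\mathfrak{S}_n}(E)/E \simeq (\ZZ/2\ZZ)^k \rtimes \mathfrak{S}_k$, i.e. the full signed permutation group (hyperoctahedral group) $B_k$, with $\mathfrak{S}_k$ permuting the copies of $\ZZ/3\ZZ$ and the $i$-th factor $\ZZ/2\ZZ$ inverting the generator $g_i$.

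Next I would pass to $\mathfrak{A}_n$. The point is to intersect the preimage of $B_k$ in $N_{\mathfrak{S}_n}(E)$ with $\mathfrak{A}_n$ and read off the signs of the relevant generators. Inverting a single generator $g_i$ is realized by a transposition inside the $i$-th block of three letters, hence is an \emph{odd} permutation; so an element of $(\ZZ/2\ZZ)^k$ lies in $\mathfrak{A}_n$ iff it inverts an \emph{even} number of the $g_i$, giving the claimed subgroup $(\ZZ/2\ZZ)^{k-1}$ (the kernel of the sum map $(\ZZ/2\ZZ)^k \to \ZZ/2\ZZ$). A transposition of two blocks, on the other hand, is a product of three transpositions of letters, hence itself \emph{odd}; therefore to land in $\mathfrak{A}_n$ it must be accompanied by an odd sign change, e.g. a single inversion $g_j \mapsto g_j^{-1}$. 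Thus $W_{\mathfrak{A}_n}(E)$ surjects onto $\mathfrak{S}_k$ (with the block-transposition of $i$ and $j$ lifting to the element inverting, say, $g_i$ while swapping the two blocks) with kernel exactly $(\ZZ/2\ZZ)^{k-1}$, which establishes the extension $1 \to (\ZZ/2\ZZ)^{k-1} \to W_{\mathfrak{A}_n}(E) \to \mathfrak{S}_k \to 1$.

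Finally I would transport this to the action on stable cohomology. Since $H^*_{\mathrm{s}}(E,\ZZ/3\ZZ) = E(e_1,\dots,e_k)$ is the exterior algebra on the dual generators $e_i$, inversion $g_i \mapsto g_i^{-1}$ acts by $e_i \mapsto -e_i$, and the lift of a block-transposition $(i\,j)$ acts by $e_i \mapsto e_j$, $e_j \mapsto -e_i$ (the extra minus coming from the accompanying sign change forced by the $\mathfrak{A}_n$-condition), with the $(\ZZ/2\ZZ)^{k-1}$ acting by even numbers of sign changes — exactly as stated. The main obstacle, and really the only subtle point, is keeping careful track of parities: that a single generator-inversion is odd in $\mathfrak{S}_n$ while a block-transposition is also odd, so that the even-sign-change subgroup and the signed transpositions are precisely what survives in $\mathfrak{A}_n$; everything else is a direct unwinding of the regular-representation description of $E$ from Lemma \ref{lConjugacySymmetric}.
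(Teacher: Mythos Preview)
Your proof is correct and follows essentially the same approach as the paper: both arguments identify the Weyl group by analyzing how normalizing permutations act on the $k$ blocks (the $E$-orbits), using that $\mathrm{Aut}(\ZZ/3\ZZ)=\ZZ/2\ZZ$ is realized by an odd transposition and that a block swap is a product of three transpositions, hence odd. Your organization via first computing $N_{\mathfrak{S}_n}(E)/E\simeq (\ZZ/2\ZZ)^k\rtimes\mathfrak{S}_k$ and then intersecting with $\mathfrak{A}_n$ is slightly more explicit than the paper's direct argument in $\mathfrak{A}_n$, but the underlying computation and the parity bookkeeping are the same.
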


\begin{proof}
Any element of the normalizer $N_{\mathfrak{A}_n}(E)$ induces a well-defined permutation of the copies of $\ZZ/3\ZZ$ in $E$. This gives a map to $\mathfrak{S}_k$ which is \emph{onto}: note that conjugating the $3$-cycle $(123)$ by $\tau = (23)$ exchanges the two nontrivial elements $g, \: g^{-1}$ in $\ZZ/3\ZZ = \langle (123) \rangle$, so we can also transpose two copies of $\ZZ/3\ZZ$ in $E$ by conjugating by an element in $\mathfrak{A}_n$. Now suppose $n \in N_{\mathfrak{A}_n}(E)$ induces the trivial element in $\mathfrak{S}_k$, so fixes all the copies of $\ZZ/3\ZZ$ in $E$ (though not necessarily elementwise). Then the only possible nontrivial automorphism of each copy of $\ZZ/3\ZZ$ is exchanging $g$ and $g^{-1}$ as before. To conclude the proof, it suffices to note that  if $n$ induces the identity in $\mathrm{Aut} ((\ZZ/3\ZZ)^k)$, then $n\in (\ZZ/3\ZZ )^k$.
\end{proof}

We can now turn to the 

\begin{proof}(of Theorem \ref{tStableCohomologyAlternating})
The remaining assertion not covered by Lemma \ref{lVanishingAlternating} and Lemma \ref{lPeriodicityAlternating} are that 
\begin{gather}\label{formula1}
H^d_{\mathrm{s}} (\mathfrak{A}_{3k}, \: \ZZ/3\ZZ ) \neq 0 \, \mathrm{for}\; d>0 \iff d= k , \; \mathrm{and} \\
H^k_{\mathrm{s}} (\mathfrak{A}_{3k}, \: \ZZ/3\ZZ ) \simeq \langle \mathrm{det}_k \rangle \; \mathrm{where} \; \mathrm{res}^{\mathfrak{A}_{3k}}_{E} (\mathrm{det}_k) = e_1\wedge \dots \wedge e_k \, . \nonumber
\end{gather}
and that
\begin{gather}\label{formula2}
H^*_{\mathrm{s}} (\mathfrak{A}_{3n+1} , \: \ZZ /3\ZZ ) \to H^*_{\mathrm{s}} (\mathfrak{A}_{3n}, \: \ZZ/3\ZZ )
\end{gather}
is \emph{surjective} (it is injective by (2) of Lemma \ref{lPeriodicityAlternating}). 

\

We prove first the assertions in the displayed formula \ref{formula1} above, and \ref{formula2} will follow easily (we just have to check that the determinant class comes from $H^*_{\mathrm{s}} (\mathfrak{A}_{3n+1} , \: \ZZ /3\ZZ )$). 
We apply the Card\'{e}nas-Kuhn Theorem \ref{tCardenasKuhn} with $S=\mathrm{Syl}_3 (\mathfrak{A}_{3k})$ containing $E$ and $G= \mathfrak{A}_{3k}$. Then
\begin{itemize}
\item
The fact that $E\simeq (\ZZ/3\ZZ )^k \subset \mathrm{Syl}_3 (\mathfrak{A}_{3k}) \subset \mathfrak{A}_{3k}$ is a closed system has been checked in \cite{Mui}, Prop. 2.2: in fact, he checks that if $A$ is any maximal elementary abelian $p$-subgroup of a symmetric group $\mathfrak{S}_n$, then any subgroup of a $p$-Sylow $\mathrm{Syl}_p (\mathfrak{S}_n)$ containing $A$ which is conjugate to $A$ in $\mathfrak{S}_n$ is conjugate to $A$ in $\mathrm{Syl}_p (\mathfrak{S}_n)$. This implies clearly the statement for the alternating groups we need.
\item
By the Card\'{e}nas-Kuhn Theorem or rather its Corollary \ref{cCardenasKuhn}, we get that the image of the cohomology of $\mathfrak{A}_{3k}$ in the cohomology of $E$ is
\begin{gather*}
\mathrm{im}\left( \mathrm{res}^{\mathrm{Syl}_3 (\mathfrak{A}_{3k})}_E \, : \, H^* (\mathrm{Syl}_3 (\mathfrak{A}_{3k}, \: \ZZ/3\ZZ) \to H^* (E, \: \ZZ/p\ZZ ) \right) \cap H^* (E, \: \ZZ/3\ZZ )^{W_{\mathfrak{A}_{3k}}(E)}\, .
\end{gather*}
\item
By Theorem \ref{tSteenrod} and induction
\[
\mathrm{res}^{\mathfrak{A}_{3k}}_{E} \, : \, H^* (\mathrm{A}_{3k}, \: \ZZ/3\ZZ ) \to H^* (E, \: \ZZ /3\ZZ )^{W_{\mathfrak{A}_{3k}}(E)}
\]
is surjective (compare also the argument in \cite{Mui}, Prop. 3.9 and Lemma 3.11). 
\end{itemize}

\

Thus 
\[
\mathrm{res}^{\mathfrak{A}_{3k}}_{E} \, : \, H^*_{\mathrm{s}} (\mathrm{A}_{3k}, \: \ZZ/3\ZZ ) \simeq H^*_{\mathrm{s}} (E, \: \ZZ /3\ZZ )^{W_{\mathfrak{A}_{3k}}(E)}
\]
and by the description of the action of $W_{\mathfrak{A}_{3k}}$ on the stable cohomology of $E$, we find that only $e_1\wedge \dots \wedge e_k$ remains spanning the positive dimensional invariants.

\

Finally, to prove the surjectivity of the arrow in the displayed formula \ref{formula2} above, consider the inclusions $E\subset \mathfrak{A}_{3n} \subset \mathfrak{A}_{3n+1}$. Then $\mathrm{Syl}_3 (\mathfrak{A}_{3k}) = \mathrm{Syl}_3 (\mathfrak{A}_{3k+1})$ and, in exact analogy to the argument above, by \cite{Mui}, Prop. 2.2, $E\subset \mathrm{Syl}_3 (\mathfrak{A}_{3k+1}) \subset \mathfrak{A}_{3n+1}$ is a closed system, so that by Card\'{e}nas-Kuhn the image of the cohomology of $\mathfrak{A}_{3n+1}$ in the cohomology of $E$ coincides with the image of the cohomology of $\mathfrak{A}_{3n}$ in $E$ (because also $W_{\mathfrak{A}_{3n}} (E)\simeq W_{\mathfrak{A}_{3n+1}}(E)$).
\end{proof}

\end{document}